\numberwithin{equation}{section}
\pgfplotsset{/pgf/number format/use comma,compat=newest}
\theoremstyle{plain}
\newtheorem{thm}{Theorem}[section]
\newtheorem{lem}[thm]{Lemma}
\newtheorem{prop}[thm]{Proposition}
\theoremstyle{definition}
\newtheorem{defn}[thm]{Definition}
\newtheorem{rem}[thm]{Remark}
\newcommand{\R}{\mathbb{R}}
\newcommand{\N}{\mathbb{N}}
\newcommand{\Z}{\mathbb{Z}}
\newcommand{\Sf}{\mathbb{S}}
\newcommand{\dist}{\textnormal{dist}}
\newcommand{\sign}{\textnormal{sign}}
\newcommand{\diam}{\textnormal{diam}}
\newcommand{\sbv}{SBV}
\definecolor{blue_links}{RGB}{13,0,180} 
\newcommand\scircle[4]{%
  \tdplotsetrotatedcoords{#2}{#3}{0}                                            
  \let\a\tdplotalpha                                                            
  \let\b\tdplotbeta                                                             
  \let\p\tdplotmainphi                                                          
  \let\t\tdplotmaintheta                                                        
  \pgfmathsetmacro\azx{cos(\a)*cos(\b)*sin(\p)*sin(\t) - sin(\b)*cos(\t) - cos(\b)*cos(\p)*sin(\a)*sin(\t)}
  \pgfmathsetmacro\azy{-cos(\a)*cos(\p)*sin(\t) - sin(\a)*sin(\p)*sin(\t)}
  \pgfmathsetmacro\azz{cos(\b)*cos(\t) + cos(\a)*sin(\b)*sin(\p)*sin(\t) - cos(\p)*sin(\a)*sin(\b)*sin(\t)}
  \pgfmathsetmacro\re {#1*cos(#4)}                                              
  \pgfmathsetmacro\ze {#1*sin(#4)}                                              
  \pgfmathsetmacro\coX{\ze*cos(#2)*sin(#3)}                                     
  \pgfmathsetmacro\coY{\ze*sin(#2)*sin(#3)}                                     
  \pgfmathsetmacro\coZ{\ze*cos(#3)}                                             
  \coordinate (coffs) at (\coX,\coY,\coZ);                                      
  \tdplotsetrotatedcoordsorigin{(coffs)}                                        
  \begin{scope}[tdplot_rotated_coords]                                          
    \pgfmathsetmacro\tanEps{tan(#4)}                                            
    \pgfmathsetmacro\bOneside{((\tanEps)^2)>=(((\azx)^2+(\azy)^2)/(\azz)^2)}    
    \ifthenelse{\bOneside=1}{
      \pgfmathsetmacro\bFrontside{(\azx*\re+\azz*\ze)>=0}                       
       \ifthenelse{\bFrontside=1}                                               
         {\draw (0,0) circle (\re);}                                            
         {\draw[dashed] (0,0) circle (\re);}                                    
    }{
      \pgfmathsetmacro\u{\azy}                                                  
      \pgfmathsetmacro\v{sqrt( (\azx)^2 + (\azy)^2 - (\azz)^2*(\tanEps)^2 )}    
      \pgfmathsetmacro\w{\azx - \azz*\tanEps}                                   
      \pgfmathsetmacro\phiBf{2*atan2(\u-\v,\w)}                                 
      \pgfmathsetmacro\phiFb{2*atan2(\u+\v,\w)}                                 
      \pgfmathsetmacro\bUnwrapA{(\phiFb-\phiBf)>360}                            
      \pgfmathsetmacro\bUnwrapB{\phiBf>\phiFb}                                  
      \ifthenelse{\bUnwrapA=1}{\pgfmathsetmacro\phiBf{\phiBf+360}}{}            
      \ifthenelse{\bUnwrapB=1}{\pgfmathsetmacro\phiBf{\phiBf-360}}{}            
      \draw[dashed] (\phiFb:\re) arc (\phiFb:{\phiBf+360}:\re);                 
      \draw (\phiBf:\re) arc (\phiBf:\phiFb:\re);                               
    }                                                                           
  \end{scope}                                                                   
}
\title[From discrete to continuum in the helical XY model: emergence of chirality transitions in the $S^1$ to $S^2$ limit]{From discrete to continuum in the helical XY-model: emergence of chirality transitions in the $S^1$ to $S^2$ limit}
\begin{document}

\author{Marco Cicalese}
\address[Marco Cicalese]{Department of Mathematics, Technische Universit\"at M\"unchen, Boltzmannstrasse 3, 85748 Garching, Germany}
\email{cicalese@ma.tum.de}

\author{Dario Reggiani}
\address[Dario Reggiani]{Applied Mathematics M\"unster, University of M\"unster, Einsteinstrasse 62, 48149 M\"unster, Germany}
\email{dario.reggiani@uni-muenster.de}

\author{Francesco Solombrino}
\address[Francesco Solombrino]{Universit\'a del Salento, via provinciale per Arnesano, 73100 Lecce, Italy}
\email{francesco.solombrino@unina.it}

	\maketitle
	\begin{abstract}We analyze the discrete-to-continuum limit of a frustrated ferromagnetic/anti-ferromagnetic $\Sf^2$-valued spin system on the lattice $\lambda_n\Z^2$ as $\lambda_n\to 0$. For $\Sf^2$ spin systems close to the Landau-Lifschitz point (where the helimagnetic/ferromagnetic transition occurs), it is well established that for chirality transitions emerge with vanishing energy. Inspired by recent work on the $N$-clock model, we consider a spin model where spins are constrained to $k_n$ copies of $\Sf^1$ covering $\Sf^2$ as $n\to\infty$. We identify a critical energy-scaling regime and a threshold for the divergence rate of $k_n\to+\infty$, below which the $\Gamma$-limit of the discrete energies capture chirality transitions while retaining an $\Sf^2$-valued energy description in the continuum limit.
	\end{abstract}
	
\tableofcontents

\section{Introduction}

Spatial variations in magnetization often arise due to the interplay of competing magnetic interactions, geometric constraints, and lattice symmetries, which can prevent the system from simultaneously satisfying all local interaction preferences. These mechanisms, collectively termed frustration, play a crucial role in enabling polarization to be induced and controlled. For its importance in the emergence of ferroelectricity in multiferroic materials geometric frustration has garnered significant attention from both the physical and mathematical communities over the past decades (see \cite{diep} for a comprehensive review). A paradigmatic class of systems exhibiting magnetic frustration is provided by classical spin models with competing ferromagnetic and antiferromagnetic interactions, which can lead to helical ground states. Despite extensive investigations, the rigorous understanding of the complete phase diagram of such systems remains an open challenge. In this paper we focus on the helical XY spin model on the square lattice $\Z^2$, a prototypical example of frustrated system that we analyze, after appropriately scaling its interaction parameters near the Landau-Lifschitz point, where helical ground states are expected to emerge. 

\bigskip

The energy of the helical XY spin model on the square lattice $\mathbb{Z}^2$ in the configuration $u:i\in\mathbb{Z}^2\mapsto u^{i}\in \Sf^2$ is
\begin{equation}\label{HXY}
E(u)=-\sum_{i\in\Z^{2}}\left(J_0 (u^{i},u^{i+e_1})-J_1(u^{i},u^{i+2e_1})+J_2(u^{i},u^{i+e_2})\right),
\end{equation}
where $(\cdot,\cdot)$ denotes the scalar product in $\R^3$ and the constants $J_0$ and $J_{1}$ are the interaction parameters for the nearest-neighbors (NN) and the next-to-nearest-neighbors (NNN) interactions in the horizontal direction $e_{1}$, respectively, while $J_{2}$ is the interaction parameter for the NN interactions in the vertical direction $e_{2}$.  
Under the assumption $J_0, J_{1}, J_{2}>0$, the system exhibits distinct behaviors along the two lattice directions. In the $e_{2}$-direction, the interaction potential $-J_2(u^{i},u^{i+e_2})$ promotes a ferromagnetic spin order with all spins aligned. Conversely, in the $e_{1}$-direction, competing effects arise: ferromagnetic (F) nearest-neighbor (NN) interactions, described by the potential $-J_0 (u^{i},u^{i+e_1})$, favor alignment, whereas antiferromagnetic (AF) next-nearest-neighbor (NNN) interactions, with potential $J_1(u^{i},u^{i+2e_1})$, favor anti-alignment, that is neighboring spins of opposite orientations. This competition induces a frustration mechanism, evident even along a single horizontal lattice line. The latter is the energy associated with interactions in the $e_{1}$-direction and is given by:
\begin{equation}\label{intro:energy-1d}
F(u)=-\sum_{j\in\Z}\left(J_0 (u^{j},u^{j+1})-J_1(u^{j},u^{j+2})\right).
\end{equation}
The energy above models a so-called F/AF {\it frustrated chain} for which it is well known that no spin configuration can minimize all interactions simultaneously (see \cite{diep} for a comprehensive analysis of frustrated spin systems). In order to simplify the geometry of the ground states of the helical $XY$ model we scale the interaction parameters by letting $J_2 \to +\infty$, hence enforcing spin alignment in the $e_{2}$-direction. As a result, spin configurations $u$ with the energy in \eqref{HXY} finite have a one-dimensional profile; i.e., $u^{(i_{1},i_{2})}=v^{i_{1}}$ for some $v:\Z\to \Sf^{2}$ and their geometry can be understood from the geometry of spin fields $v$ with finite one-dimensional energy $F(v)$. 
The analysis of the F/AF frustrated chain model on $\Z$ in the vicinity of the helimagnetic/ferromagnetic transition point and in the so-called discrete-to-continuum limit was first conjectured in \cite{DmiKriext} (an extended version of \cite{DmiKri}) and subsequently rigorously carried out via $\Gamma$-convergence. In this direction initial results were obtained for spins constrained to $\Sf^{1}$ in \cite{CiSol} (see also \cite{SciVall} and see \cite{CiFoOr_1,CiFoOr_2} for two possible extensions of this $\Sf^1$ model to $\Z^2$) and later extended to the $\Sf^{2}$ case in \cite{CiRuSol} (see also \cite{KuLa}). Before explaining in details our main results we briefly summarize those obtained in the papers \cite{CiSol} and \cite{CiRuSol}.\\

In \cite{CiSol} the continuum limit of the F/AF chain energy in \eqref{intro:energy-1d} has been studied in the case of $\Sf^{1}$-valued spins. After scaling the functional by a small parameter $\frac{\lambda_n}{J_{1}}$ (${\lambda_n}\to0$ as $n\to +\infty$), and setting $\Z_{n}=\{j \in \Z:\  \lambda_{n} j \in [0,1]\}$ one defines $F_{n}:\{u:j\in\Z_{n}\mapsto u^{j}\in S^{1}\}\to\R$ as 
\begin{equation}\label{intro:energy-n}
F_{n}(u)=-\alpha\sum_{j\in\Z_{n}}{\lambda_n}(u^{j},u^{j+1})+\sum_{j\in\Z_{n}}{\lambda_n}(u^{j},u^{j+2})
\end{equation}
where $\alpha=J_{0}/J_{1}$ is known as the frustration parameter and completely characterizes the ground states of $F_{n}$. More precisely, neighboring spins are aligned if $\alpha\geq 4$ (ferromagnetic phase), while they form a constant angle $\varphi=\pm\arccos(\alpha/4)$ if $0<\alpha<4$ (helimagnetic phase). When the system is in the helimagnetic phase it shows a chiral symmetry. This means that two different families of ground states are possible according to the two possible choices of $\varphi$, corresponding to either clockwise or counter-clockwise handedness of the helical configuration (or analogously spin angular variation) or to what is known as chirality $+1$ or $-1$. When $\alpha$ is close to $4$ the system is at the Landau-Lifschitz point, that is at the onset of frustration. At this point the energy necessary to break the chiral symmetry can be found letting the frustration parameter $\alpha$ approach $4$ from below. In \cite{CiSol} the authors consider this limit within the discrete-to-continuum analysis of the model. They let $\alpha$ depend on $n$ and replace in \eqref{intro:energy-n} $\alpha$ by $\alpha_{n}=4(1-{\delta_n})$ for some vanishing sequence ${\delta_n}>0$. They then refer the energy to its minimum by introducing the functional $H_{n}(u):=F_{n}(u)-\min F_{n}$ and consider the asymptotic behavior as the lattice spacing $\lambda_n\to 0$ of a properly scaled version of $H_n$.  In \cite{CiSol} the minimal energy necessary to brake the chirality symmetry in the continuum limit is found by computing the $\Gamma$-limit of $H_{n}/({\lambda_n}{\delta_n}^{3/2})$ with respect to the $L^{1}$ convergence of the chirality order parameter (a properly scaled version of the angular increment between neighboring spins) as ${\lambda_n}\to 0$. More precisely for ${\lambda_n}/\sqrt{{\delta_n}}\to 0$ (at other scalings chirality transitions are either forbidden or not penalized)  it is proved that states $u_n$ with $H_{n}(u_n)/({\lambda_n}{\delta_n}^{3/2})$ uniformly bounded, have chiralities that converge in the continuum limit to piece-wise constant functions. The energy cost of a chirality transition is then obtained and the $\Gamma$-limit of $H_n/({\lambda_n}{\delta_n}^{3/2})$ computed, resulting proportional to the number of jumps of the chirality, namely the number of times the spin configuration changes the sign of its angular velocity. Since the limiting energy penalizes phase transitions, the result in \cite{CiSol} falls under the category of discrete-to-continuum problems discussed in the monograph \cite{ABCS}. In the case of $\Sf^{2}$-valued spins the picture drastically changes. This time the chirality is a vectorial order parameter entailing not only the handedness of the helical configuration but also the axis of rotation which can now vary.  While one can still renormalize the energy and show that the modulus of the chirality remains constant for ground states, the transition mechanism between ground states of differing chiralities is different. In \cite{CiRuSol} it has been proved that the ($\Sf^{2}$) helical XY system can adjust its chirality continuously over a slow spatial scale paying little energy in the continuum limit. This phenomenon is rooted in the behavior of the renormalized energy, which at leading order resembles a discrete vectorial Modica-Mortola-type functional in the chirality variable. The potential term associated with this functional has connected wells, enabling low energy transitions. This result aligns with the behavior of Modica-Mortola functionals in the continuum, as rigorously pointed out in the works \cite{Ambrosio} and \cite{Baldo}.

\medskip
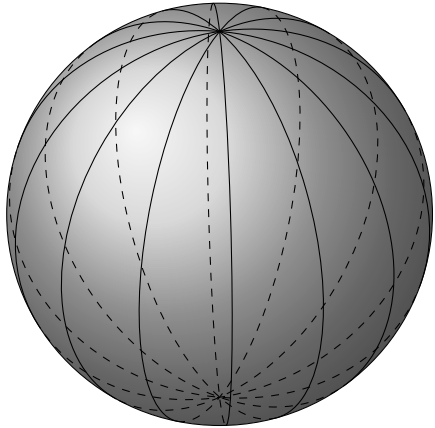
\begin{figure}[H]
\tdplotsetmaincoords{60}{125}    
\begin{tikzpicture}[scale=1.4]

\draw[tdplot_screen_coords, ball color = black!20, opacity=0.4] (0,0,0) circle (2); 

\foreach \angle in {0,51.4,...,308.57} {
    \scircle{2}{\angle}{90}{0};
}

\end{tikzpicture}
\caption{$\Sf^2$ and an example of $k=7$ admissible copies of $\Sf^1$}
\label{intro:fig}
\end{figure}
It is worth noting that, in the continuum framework, a similar interplay between ferromagnetic and antiferromagnetic (or alignment and anti-alignment) behavior arises in phase transition problems. This occurs when the energy includes two (or more) terms depending on derivatives of the phase variable of different orders, contributing to the total energy with opposing signs (see, for instance, \cite{FM, CDMFL, CSZ, FHLZ}). 

In this paper, we aim to better understand the role of the topology of the spin target space in the emergence of chirality transitions in helical systems. Our work is inspired by the pioneering paper \cite{FS} and \cite{COR1, COR2, COR3}, where a question was posed in the case of the $XY$ (rotator) model, comparing it to the so-called $N$-clock model. Here, we extend this comparison to the $\Sf^2$ helical $XY$ model by introducing a multi-dimensional analog of the $N$-clock model. We constrain the spins to belong to a finite collection of $k$ distinct copies of $\Sf^1$, obtained by rotating a maximal circle in $\Sf^2$ by integer multiples of $2\pi/k$ around a fixed diameter (see Figure \ref{intro:fig}). We investigate the behavior of these spin systems in the continuum limit as before close to the ferromagnetic/helimagnetic transition point (for $\alpha_n=(4-\delta_n)$, $\delta_n\to 0$ and $\lambda_n/\sqrt{\delta_n}\to 0$) when $k=k_n\to\infty$. To this end we first introduce the family of energies $H^{k_n}_n$ defined such that $H^{k_n}_n(u)=H_n(u)$ if the spin field $u$ respects the constraints while $H^{k_n}_n(u)=+\infty$ otherwise. We then compute via $\Gamma$-convergence (with respect to the $L^{1}$ convergence of a properly scaled chirality order parameter associated to $u$ as in the $\Sf^2$ case) the discrete-to-continuum limit of the family of energies $H^{k_n}_n/(\lambda_n \delta_n^{3/2})$. As the lattice spacing $\lambda_n\to 0$, roughly speaking the collection of $k_n$ copies of $\Sf^1$ gradually covers $\Sf^2$. If $k_n$ grows sufficiently slowly, the spins cannot modify their chirality continuously over a slow spatial scale at zero energy cost, unlike in the unconstrained $\Sf^2$ case.
In Theorem \ref{thm: Gamma lim < 1 quarto}, assuming $1 \ll k_n \ll \delta_n^{-1/4}$, we show that the admissible chiralities are piece-wise constant $BV$ functions, that each chirality transition pays a fixed energy cost, and the limiting energy is proportional to the number of transitions, hence mirroring the behavior of the $\Sf^1$ case (see \cite{CiRuSol} for analogous results on helical $XY$ models where $\Sf^2$ spins are constrained to finitely many $\Sf^1$ copies). Furthermore, in Theorem \ref{thm: critical case}, we establish that the upper bound $\delta_n^{-1/4}$ is sharp. Specifically, for $k_n \sim \delta_n^{-1/4}$, the limit chirality is still a function of bounded variation, but it is no longer piecewise constant. Moreover the limiting energy scales as its total variation and when a chirality transition occurs, both the $\Gamma\text{-}\liminf$ and $\Gamma\text{-}\limsup$ of $\frac{H^{k_n}_n}{\lambda_n \delta_n^{3/2}}$ may depend on the traces of the chirality. A detailed analysis of this dependence lies beyond the scope of the present work. From a technical perspective, the variational analysis of the present model amounts to analyze the functional $H_n^{k_n}/{\lambda_n \delta_n^{3/2}}$ which, in terms of a properly scaled chirality variable can be written as a discrete approximation of a vector-valued Modica-Mortola-type functional (we refer to \cite{BraYip} for the scalar case). In our framework, the considered functionals involve $k_n$ disconnected wells that, as $n\to +\infty$ converge to a connected set (see also \cite{CriGra, CriFonGan, BraZepp} for the study of phase field functionals with varying wells in the context of homogenization). In this respect our proof strategy highlights how the topology of the spin target space in our helical $XY$ model, explicitly examined through the introduction of a higher-dimensional analog of the $N$-clock model, can be related to the study of phase-transition functionals whose potential wells undergo topological transformations as the transition length approaches zero. 


	\section{Notation and Preliminaries}
	
	For $d, k \in \N$, we denote by $\mathcal{L}^d$ the Lebesgue measure in $\R^d$ and by $\mathcal{H}^k$ the $k$-dimensional Hausdorff measure in $\R^d$. The Lebesgue measure of a set $E \subset \R^d$ is indicated with $|E|$. For every $r > 0$ and every $x \in \R^d$, we denote by $B_r(x)$ the open
	ball in $\R^d$ of radius $r$ and center $x$. If $x=0$ we only write $B_r$. The symbol $\Sf^N$ stands for the unit sphere in $\R^{N+1}$. Given two vector $a,b \in \R^m$ we denote with $(a,b)$ their scalar product. Given two sequences of real numbers $a_n,b_n$ we write $a_n \sim b_n$ if there exist two positive constants $c_1$ and $c_2$ such that $c_1 b_n \leq a_n \leq c_2 b_n$ for every $n \geq 1$.
	
We now briefly recall some basic definitions about bounded variation functions. We refer to \cite[Section 4.5]{Ambrosio2000FunctionsOB} for more details on this topic. Let $\Omega \subset \R^d$ be an open set. The space $\textnormal{BV}(\Omega,\R^m)$ of functions of bounded variation is the set of $u \in L^1(\Omega;\R^m)$ whose distributional gradient $Du$ is a bounded Radon measure on $\Omega$ with values in $\R^{m \times d}$. Given $u \in \textnormal{BV}(\Omega,\R^m)$ we can write $Du=D^a u+ D^s u$, where $D^a u$ is absolutely continuous and $D^s u$ is singular with respect to $\mathcal{L}^d$. The set $J_{u}$ denotes the jump set of~$u$, $\nu_{u}$ stands for the approximate unit normal to~$u$, and $[u]:= u^{+} - u^{-}$ represents the jump of~$u$, that is, the difference between the traces~$u^{+}$ and~$u^{-}$ of~$u$ on~$J_{u}$, defined according to the orientation of~$\nu_{u}$. The set $J_u$ is countably rectifiable, while the density $\nabla u \in L^1(\Omega,\R^{m \times d})$ of $D^a u$ with respect to $\mathcal{L}^d$ coincides a.e. in $\Omega$ with the approximate gradient of $u$. The space $\sbv(\Omega,\R^m)$ of special functions of bounded variation is defined as the set of all $u \in \textnormal{BV}(\Omega,\R^m)$ such that $|D^s u|(\Omega \setminus J_u)=0$. Moreover, we denote by $\sbv_{\textnormal{loc}}(\Omega,\R^m)$ the space of functions belonging to $\sbv(U,\R^m)$ for every $U \Subset \Omega$. Given a set $\Omega \subset \R^n$ we say that the countable family $\{E_\alpha\}_\alpha$ is a Caccioppoli partition of $\Omega$ if $\mathcal{L}^d(\Omega \setminus \cup_\alpha E_\alpha)=0$ and $\sum_\alpha P(E_\alpha;\Omega)=0$, where $P(E;\Omega)$ is the perimeter of the set $E$ relative to $\Omega$.
	
We now present the definition of bounded variation piecewise constant functions on $\Omega$.
	
	\begin{defn}\label{sbvpc}
		Given $u \in \sbv(\Omega;\R^m)$ we say that $u \in  BV_{\textnormal{pc}}(\Omega;\R^m)$ if there exists a Caccioppoli partition of $\Omega$ such that $u$ is constant on each element of the partition.
	\end{defn}
Let $\Omega$ be a bounded closed subset of $\R^n$ with Lipschitz boundary, such that $0 \in \Omega$, and let $\{\lambda_n\}_n$ be a vanishing sequence. We call $\Z_n^d(\Omega)$ the set of all $i \in \Z^d$ such that $\lambda_n i \in \Omega$, that is
	\begin{equation}\label{def Zn}
		\Z_n^d(\Omega):=\{i \in \Z^d \colon \ \lambda_n i \in \Omega \}=\Z^d \cap \lambda_n^{-1}\Omega.
	\end{equation}
	We introduce the triangulation $\mathcal{T}_n$ of $\Omega$ associated to $\lambda_n$ as the triangulation obtained by Freudenthal (also called Kuhn) decomposition of the cubes with vertices $\lambda_n i$ for every $i \in \Z_n^d(\Omega+B_{\sqrt{d}\lambda_n})$.
	We also define the space $\mathcal{U}_n^d(\Omega;\R^m)$ as the space of functions $z \colon i \in \Z_n^d(\Omega) \to \R^m$. Given $z \in \mathcal{U}_n^d(\Omega;\R^m)$ we will often use $z^i$ to indicate $z(i)$.
	
We also need the notion of affine interpolant.
	
	\begin{defn}\label{affine interpolant}
		Given $z \in \mathcal{U}_n^d(\Omega,\R^m)$ we call the affine interpolant of $z$, and we indicate it with $\tilde{z}$, the piecewise affine function $\tilde{z} \colon \Omega \to \R^m$ such that $\tilde{z}(\lambda_n i)=z(i)$ for every $i \in \Z_n^d(\Omega)$ and which is affine on the triangulation $\mathcal{T}_n$ of $\Omega$.
	\end{defn}
	
	Finally, we recall the notion of Kuratowski limit.
	
	\begin{defn}\label{def: kur}
		Given a sequence of closed sets $\{E_n\}_n$ and a closed set $E$ in $\R^N$, we say that the sequence $E_n$ converges to $E$ in the Kuratowski sense if, given a sequence of points $\{x_n\}_n$ with $x_n \in E_n$ for every $n \geq 1$, we have that any cluster point $x$ of $\{x_n\}_n$ belongs to $E$, and every $x \in E$ is the limit of a sequence $\{x_n\}_n$ with $x_n \in E_n$. Alternatively, if and only if the sequence of functions $\dist(\cdot,E_n)$ converges locally uniformly to $\dist(\cdot,E)$ on $\R^N$. 
	\end{defn}
In what follows we introduce the spin model we are interested in and some of the results already obtained in previous papers.\\ 

A configuration for the helical XY $\Sf^2$-spin model on the square lattice $\Z^2$ is a map $u \colon \Z^2 \to \Sf^2$ whose energy is given by 
	\begin{equation}\label{energy}
		E(u)=-\sum_{i \in \Z^2} \left( J_0 (u^i,u^{i+e_1})-J_1(u^i,u^{i+2e_1})+J_2(u^i,u^{i+e_2}) \right),
	\end{equation}
	where the constants $J_0$ and $J_1$ are the interaction parameters for the nearest-neighbors (NN) and the next-to-nearest-neighbors (NNN) interactions in the horizontal direction $e_1$, respectively, while $J_2$ is the interaction parameter for the NN interactions in the vertical direction.
	We let the three parameters $J_0,J_1,J_2>0$ be scale independent. 
	
	Let $\Omega:=[0,1]^2$. Recalling the definition of $\Z_n^2(\Omega)$ in \eqref{def Zn}, we define $\mathcal{U}^2_n(\Omega;\Sf^2)$ as the space of functions $u \in \Z^2_n(\Omega) \mapsto \Sf^2$ and $\overline{\mathcal{U}}^2(\Omega;\Sf^2)$ as the subspace of those functions $u$ such that, for all $m \in \Z \cap [0,[1/\lambda_n]],$ it holds
	\begin{equation}\label{U overline}
		\left( u^{(1,m)},u^{(0,m)} \right)=\left( u^{([1/\lambda_n],m)},u^{([1/\lambda_n]-1,m)} \right),
	\end{equation}
	where $[\cdot]$ denotes the integer part. We moreover set $R^1_n(\Omega):=\{ i \in \Z_n^2(\Omega) \colon \ i+2e_1 \in \Z_n^2(\Omega)\}$ and $R^2_n(\Omega):=\{ i \in \Z_n^2(\Omega) \colon \ i+e_2 \in \Z_n^2(\Omega)\}$. \\
	
	Without loss of generality we scale the energy \eqref{energy} by $J_{1,n}:=J_1/\lambda_n$ and rename $J_{0,n}$ and $J_{2,n}$ accordingly, obtaining the energy for  $u \in \overline{\mathcal{U}}^2(\Omega;\Sf^2)$ given by
	\begin{equation}\label{energyn}
	E_n(u):= - \sum_{i \in R^1_n(\Omega)} \lambda^2_n {J_{0,n}(u^i,u^{i+e_1})-(u^i,u^{i+2e_1})}-J_{2,n} \sum_{i \in R^2_n(\Omega)} \lambda_n^2 (u^i,u^{i+e_2}).
	\end{equation}
	We refer to \cite{CiSol, CiRuSol} for the computation of the energy $E_n$ ground states. Taking $J_{0,n}>4$ leads to trivial ferromagnetic ground states. Hence, we focus on the asymptotics of the renormalized energy 
	$$
	H_n(u):= \frac{1}{2} \left( \sum_{i \in R^1_n(\Omega)} \lambda_n^2 \left| u^i-\frac{J_{0,n}}{2} u^{i+e_1}+u^{i+2e_1}  \right|^2+J_{2,n} \sum_{i \in R^2_n(\Omega)} \lambda_n^2\left|u^{i+e_2}-u^i\right|^2 \right),
	$$
	when the parameter $J_{0,n}$ is in the vicinity of the Landau-Lifschitz point $J_0=4$ and the parameter $J_{2,n}$ diverges. To this aim, we introduce $\delta_n \to 0$ and consider $J_{0,n}=4(1-\delta_n)$. We thus rewrite the energy $H_n$ as
	\begin{equation}\label{energy Hn}
		H_n(u)= \frac{1}{2} \left( \sum_{i \in R^1_n(\Omega)} \lambda_n^2 \left|u^i-2(1-\delta_n)u^{i+e_1}+u^{i+2e_1}\right|^2+J_{2,n} \sum_{i \in R^2_n(\Omega)} \lambda_n^2\left|u^{i+e_2}-u^i\right|^2 \right).
	\end{equation}
	Within this choice stable states have a one-dimensional helical structure and may
	exhibit chirality transitions in the propagation direction (the horizontal axis in our case). Consequently, the analysis we are going to perform starts by considering energies on one-dimensional horizontal slices of the domain. To this aim we introduce some additional notation. Let $I=[0,1]$ and define $\Z_n(I)$ as the set of points $i \in \Z$ such that $\lambda_n i \in [0,1]$. Define also $R_n(I):=\{ i \in \Z_n(I) \colon \ i+2 \in \Z_n(I) \}$. Similarly as in the two-dimensional setting we denote with $\mathcal{U}_n(I;\Sf^2)$ the space of functions $u \colon i \in \Z_n(I) \mapsto u^i \in \Sf^2$ and with $\overline{\mathcal{U}}_n(I;\Sf^2)$ the subspace of those $u$ such that
	\begin{equation}
		\left( u^1,u^0 \right)=\left( u^{[1/\lambda_n]},u^{[1/\lambda_n]-1} \right).
	\end{equation}
	It is convenient also to consider the following class of piecewise constant functions: given $u \in \overline{\mathcal{U}}_n(I)$ we associate to it a piecewise constant interpolation belonging to the class
	$$
	C_n(I;\Sf^2):=\{ u \in \overline{\mathcal{U}}_n(I;\Sf^2) \colon \ u(x)=u^i \ \ \mbox{if} \ x \in \lambda_n(i+[0,1)), \ i \in \Z_n(I) \}.
	$$
	The one-dimensional (sliced) renormalized energy is denoted by $H_n^{sl} \colon L^\infty(I;\R^3) \to [0,+\infty]$ and it is given by
	\begin{equation}\label{Hn energy sliced}
		H_n^{sl}(u):= 
		\begin{cases*}
			\displaystyle \frac{1}{2} \sum_{i \in R_n(\Omega)} \lambda_n \left|u^i-2(1-\delta_n)u^{i+1}+u^{i+2}\right|^2 & if $u \in C_n(I,\Sf^2)$, \\
			+\infty & otherwise.
		\end{cases*}
	\end{equation}
	It was proven in \cite[Proposition 2.1]{CiRuSol} that the zero order $\Gamma$-limit of $H_n^{sl}$ is trivial. 
	\begin{prop}
		Let $H_n^{sl} \colon L^\infty (I;\R^3) \to [0,+\infty]$ be the functional defined in \eqref{Hn energy sliced}. The $\Gamma \mbox{-} \lim_n H_n^{sl}$ with respect to the weak*-convergence in $L^\infty$ is given by
		\begin{equation*}
			H^{sl}(u)=
			\begin{cases*}
				0 & if $|u| \leq 1$, \\
				+\infty & otherwise.
			\end{cases*}
		\end{equation*}
	\end{prop}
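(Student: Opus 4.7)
The proof splits into the two $\Gamma$-convergence inequalities. For the $\Gamma$-liminf, I take a sequence $u_n \in L^\infty(I;\R^3)$ converging weak* to $u$ and assume $\liminf_n H_n^{sl}(u_n) < +\infty$; extracting a subsequence along which the liminf is attained, $H_n^{sl}(u_n)$ is finite, which by definition of $H_n^{sl}$ forces $u_n \in C_n(I;\Sf^2)$ and hence $|u_n(x)| = 1$ a.e.\ on $I$. Since the closed unit ball of $L^\infty$ is weak*-closed, $\|u\|_\infty \leq 1$, so $H^{sl}(u) = 0 \leq \liminf_n H_n^{sl}(u_n)$. Conversely, if $\|u\|_\infty > 1$ on a positive-measure set, the same argument rules out any subsequence of $u_n$ with eventually finite energies, forcing $\liminf_n H_n^{sl}(u_n) = +\infty = H^{sl}(u)$.

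For the $\Gamma$-limsup the crucial observation is that $H_n^{sl}$, being a discrete second-difference-type functional with the factor $2(1-\delta_n)$ in place of $2$, already vanishes to leading order on spatially constant $\Sf^2$-valued spin fields. Indeed, for any $c \in \Sf^2$,
\begin{equation*}
c - 2(1-\delta_n)c + c \;=\; 2\delta_n c,
\end{equation*}
so the constant spin field $v^i \equiv c$ satisfies $H_n^{sl}(v) \leq 2\delta_n^2 \to 0$. Using this, I recover any piecewise-constant $\Sf^2$-valued target $v = \sum_{\ell=1}^{N} c_\ell \chi_{A_\ell}$ (with finitely many pieces and $c_\ell \in \Sf^2$) by defining $v_n \in C_n(I;\Sf^2)$ to agree with $v$ at each lattice point. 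Inside each $A_\ell$ the sum contributes $O(\delta_n^2)$ as above, while the $O(N)$ triples straddling interfaces are controlled by the crude bound $|v_n^i - 2(1-\delta_n) v_n^{i+1} + v_n^{i+2}|^2 \leq 16$, giving a total interface contribution of order $O(N\lambda_n)$. A further $O(\lambda_n)$ adjustment of finitely many spins near the endpoints enforces the boundary condition \eqref{U overline}. Thus $v_n \to v$ strongly in every $L^p(I;\R^3)$ with $p < \infty$, and in particular $v_n$ converges weak* to $v$ while $H_n^{sl}(v_n) \to 0$.

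For a general $u \in L^\infty(I;\R^3)$ with $\|u\|_\infty \leq 1$, I rely on the classical density result that the closed unit ball of $L^\infty(I;\R^3)$ is the weak*-sequential closure of $\Sf^2$-valued piecewise-constant simple functions (the closed unit ball of $\R^3$ equals the convex hull of $\Sf^2$, as witnessed e.g.\ by $u(x) = \tfrac{1+|u(x)|}{2}\hat u(x) + \tfrac{1-|u(x)|}{2}(-\hat u(x))$ with $\hat u(x)=u(x)/|u(x)|$, realized through fast-oscillation approximations). This yields $v^{(k)}$ converging weak* to $u$, and the previous paragraph supplies, for each $k$, a sequence $v_n^{(k)} \in C_n(I;\Sf^2)$ converging weak* to $v^{(k)}$ with $H_n^{sl}(v_n^{(k)}) \to 0$ as $n \to \infty$. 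Metrizability of weak* convergence on bounded subsets of $L^\infty$ then allows a standard diagonal extraction $n \mapsto k(n) \to \infty$ producing $u_n := v_n^{(k(n))}$ with $u_n$ converging weak* to $u$ and $H_n^{sl}(u_n) \to 0$.

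The main obstacle here is conceptual rather than technical: the point is to recognize that the second-difference structure of $H_n^{sl}$ together with the multiplier $(1-\delta_n)$ makes spatially constant $\Sf^2$-valued fields already into $O(\delta_n^2)$-near-minimizers, so that no helical (oscillatory) construction is required at this zero-order level. The more delicate helical recovery sequences built from the optimal angle $\varphi_n = \arccos(1-\delta_n)$, which require the scaling $\lambda_n/\sqrt{\delta_n} \to 0$, become essential only in the refined $\Gamma$-expansion at scale $\lambda_n \delta_n^{3/2}$ treated in the rest of the paper.
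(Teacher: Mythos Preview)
Your proof is correct. The paper does not provide its own argument here but defers to \cite[Proposition~2.1]{CiRuSol}; your route---constant spin fields (with energy $2\delta_n^2\to 0$) as building blocks for piecewise constant $\Sf^2$-valued targets, then weak*-density of such maps in the closed unit ball of $L^\infty(I;\R^3)$ together with a metrizability/diagonal extraction---is self-contained and valid, and your closing observation that the helical optimal profiles only become essential at the refined scale $\lambda_n\delta_n^{3/2}$ is accurate.
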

	\noindent This degeneracy of minima for $H^{sl}$ suggests to perform a higher order analysis by $\Gamma$-convergence. In order to study the higher order asymptotic behavior of the one dimensional renormalized energy we start by introducing an order parameter. Given a function $u \in C_n(I;\Sf^2)$, for all $i \in \{ 1,\dots, [1/\lambda_n]-1\}$, we set
	\begin{equation}\label{theta}
		\theta^i(u) = \arccos((u^i,u^{i+1})) \in [0,\pi],
	\end{equation}
	and $w^i = u^i \times u^{i+1}$. We also introduce a new order parameter $z \colon \Z_n(I) \to \R^3$ defined as
	\begin{equation}\label{zeta}
		z^i=\frac{u^i \times u^{i+1}}{\sqrt{2\delta_n}}=\frac{w^i}{\sqrt{2\delta_n}},
	\end{equation}
	which represents a rescaled angular velocity (to be well defined we set $z^{[1/\lambda_n]}:=z^{[1/\lambda_n]-1}$). Such $z$ will be extended in $L^1(I;\R^3)$ by means of a piecewise constant interpolation. We define the map $T_n \colon C_n(I;\Sf^2) \to L^1(I;\R^3)$ as the map that associate to each $u$ the corresponding $z$ according to \eqref{zeta}. Notice that the map $T_n$ is not injective and if $u$ satisfies periodic boundary conditions in the sense of \eqref{U overline}, then $|z|$ is periodic and vice-versa. Thus, we define the energy $H^{sl}_n$ on $L^1(I;\R^3)$ as
	\begin{equation}\label{no??}
		H^{sl}_n(z):=
		\begin{cases*}
			\displaystyle \inf_{T_n(u)=z} H^{sl}_n(u)  & if $z=T_n(u)$ for some $u \in C_n(I;\Sf^2)$, \\
			+\infty & otherwise.
		\end{cases*}
	\end{equation}
	Taking the infimum in the definition above has no effect in the asymptotic analysis we are going to perform (see \cite[Remark 3.1]{CiRuSol}). However, without restriction on the possible values the configuration $u$ can take on $\Sf^2$, it was proven in \cite{CiRuSol} that the $\Gamma$-limit of $H^{sl}_n$ is trivial as well. That is, in contrast to the $\Sf^1$-valued spin system, for $\Sf^2$-valued spins the functional $H^{sl}_n$ does not penalize chirality transitions between ground states and the optimal asymptotic energy for a transition turns out to be zero. The following result is contained in \cite[Theorem 3.1]{CiRuSol}.
	\begin{thm}
		Let $H_n^{sl} \colon L^1(I;\R^3) \to [0,+\infty]$ be defined as in \eqref{no??}. Assume that $\frac{\lambda_n}{\sqrt{\delta_n}} \to 0$. Then the rescaled functionals $\frac{H^{sl}_n}{\sqrt{2}\lambda_n\delta_n^{3/2}}$ $\Gamma$-converge with respect to the weak*-convergence in $L^\infty(I)$ to the functional
		\begin{equation*}
			H^{sl}(z)=
			\begin{cases*}
				0 & if $z \in L^\infty(I,B_1)$, \\
				+\infty & otherwise.
			\end{cases*}
		\end{equation*}
	\end{thm}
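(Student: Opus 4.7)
The liminf inequality reduces, by nonnegativity of $H_n^{sl}$, to showing that any weak*-$L^\infty$ limit $z$ of a sequence $z_n = T_n u_n$ with uniformly bounded rescaled energy must satisfy $|z| \leq 1$ a.e.\ in $I$. To this end I would exploit the orthogonal decomposition of the discrete second difference relative to $u^{i+1}$: writing $a_j := (u^j,u^{j+1})$ and $V_j := u^j - a_j u^{j+1}$, so that $|V_j|^2 = 1 - a_j^2 = 2\delta_n |z^j|^2$, one has
\[
|u^i - 2(1-\delta_n) u^{i+1} + u^{i+2}|^2 = (a_i + a_{i+1} - 2(1-\delta_n))^2 + |V_i + V_{i+1}|^2.
\]
A Cauchy--Schwarz argument on the parallel part shows that whenever $|z^i|^2 + |z^{i+1}|^2 > 2 + \varepsilon$, the local energy contribution is at least of order $\delta_n^2 \varepsilon^2$. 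Summing over $i$, integrating in $x$, and dividing by $\lambda_n \delta_n^{3/2}$ yields an $L^2$-bound $\int_I (|z_n|^2 - 1)_+^2 \, dx = O(\lambda_n/\sqrt{\delta_n}) \to 0$; combined with the weak* convergence, a simple duality argument (testing with $\chi_{\{|z|>1\}} z/|z|$) then forces $|z| \leq 1$ a.e.

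For the limsup inequality, since the candidate limit vanishes identically on $L^\infty(I,B_1)$, it suffices to produce for each admissible $z$ a sequence $u_n$ with $T_n u_n \overset{*}{\rightharpoonup} z$ and rescaled energy going to zero. The key is to exploit the continuous availability of rotation axes in $\Sf^2$, absent in the $\Sf^1$ setting. I would first approximate $z$ in weak*-$L^\infty$ by a sequence $\nu_n \colon I \to \Sf^2$ of smooth axis fields oscillating on a mesoscopic scale $\eta_n$ with $\lambda_n \ll \eta_n \to 0$ (such approximations exist because $L^\infty(I,B_1)$ is the weak* closure of $\Sf^2$-valued maps), and build the recovery configuration iteratively by $u_n^{i+1} := R_{\theta_n, \nu_n(\lambda_n i)}(u_n^i)$, where $\theta_n := \arccos(1-\delta_n)$ is the optimal rotation angle and $R_{\theta, \nu}$ denotes the rotation of angle $\theta$ around the axis $\nu$. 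A direct computation shows that $T_n u_n$ coincides, up to lower-order terms, with the tangential projection of $\nu_n$ along $u_n$, from which the required weak* convergence follows.

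The hard part, and what I expect to absorb most of the proof, is the quantitative energy estimate for this recovery sequence. When $\nu_n$ is locally constant the triples $(u_n^i, u_n^{i+1}, u_n^{i+2})$ form an exact planar helix at the optimal angle and contribute zero to $H_n^{sl}$; the only nonzero terms come from the variation of $\nu_n$ across lattice sites. Expanding the composition $R_{\theta_n, \nu_{i+1}} R_{\theta_n, \nu_i}$ to second order in $\theta_n$ yields a local energy bound of order $\lambda_n \delta_n |\nu_n(\lambda_n(i+1)) - \nu_n(\lambda_n i)|^2$; combined with the Lipschitz bound $\|\nabla \nu_n\|_\infty = O(1/\eta_n)$ this gives a total rescaled energy of order $\lambda_n/(\sqrt{\delta_n}\,\eta_n^2)$. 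Choosing $\eta_n$ so that $(\lambda_n/\sqrt{\delta_n})^{1/2} \ll \eta_n \to 0$, which is feasible under the standing assumption $\lambda_n/\sqrt{\delta_n} \to 0$, finishes the proof.
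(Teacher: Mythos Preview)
The paper does not prove this theorem itself; it is quoted as \cite[Theorem~3.1]{CiRuSol}. Your liminf strategy is the right one and is essentially the mechanism behind Propositions~\ref{prop: bound on Hsln} and~\ref{prop: z compactness} (both taken from \cite{CiRuSol}): the energy controls a term of the form $\frac{\sqrt{\delta_n}}{\lambda_n}\int_I(|\beta_n z_n|^2-1)^2$, forcing $(|z_n|^2-1)_+\to 0$ in $L^2$ and hence $|z|\le 1$ for any weak* limit. One remark: your definition $V_j:=u^j-a_ju^{j+1}$ makes $V_{i+1}$ orthogonal to $u^{i+2}$, not to $u^{i+1}$, so the displayed Pythagorean identity is not literally correct; you need the component of $u^{i+2}$ orthogonal to $u^{i+1}$ instead. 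This is cosmetic.

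For the limsup your global idea---rotate at the optimal angle around a slowly varying axis $\nu_n$---is exactly the mechanism that makes the $\Sf^2$ result differ from the $\Sf^1$ one, and your energy count $O(\lambda_n/(\sqrt{\delta_n}\,\eta_n^2))$ is right. There is however a genuine gap in the convergence part. You identify $T_nu_n$ with the projection $\nu_n-(u_n,\nu_n)u_n$ and assert this still weak*-converges to $z$; but the scalar $(u_n^i,\nu_n^i)$ is not small a priori. Since a rotation about $\nu_n^i$ preserves $(\,\cdot\,,\nu_n^i)$, one has
\[
(u_n^{i+1},\nu_n^{i+1})-(u_n^i,\nu_n^i)=(u_n^{i+1},\nu_n^{i+1}-\nu_n^i)=O(\lambda_n/\eta_n),
\]
and after $O(1/\lambda_n)$ steps this drift can accumulate to size $O(1/\eta_n)\to\infty$. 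If $u_n$ drifts towards $\nu_n$, the tangential projection collapses and you lose the weak* limit. The standard remedy (and the device used in the upper-bound constructions of this paper, e.g.\ in Theorem~\ref{thm: Gamma lim < 1 quarto}) is to take $\nu_n$ \emph{piecewise constant} and to switch axis only at lattice points where $u_n$ lies on the intersection of the old and new rotation planes; there $(u_n,\nu_n^{\pm})=0$ on both sides and no drift is generated. Your smooth-$\nu_n$ ansatz does not obviously arrange this, so the step ``from which the required weak* convergence follows'' needs either that discrete construction or a separate argument controlling the accumulated holonomy.
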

		\section{Main results}
In order to better understand the role of topology in the emergence of chirality transitions, drawing inspiration from \cite{FS} and \cite{COR1, COR2, COR3}, we modify the energy $H^{sl}_n$ introduced in \eqref{no??} by adding a constraint on the possible values the spin variable $u$ can take on $\Sf^2$. Namely, we impose that $u$ can take values only on a subset of $\Sf^2$ consisting of $k$ copies of $\Sf^1$, where $k \in \N$. We show that if $k=k_n$ diverge "slowly enough" as $n \to +\infty$, we can detect a non-trivial limit energy. To carry out the analysis we need to fix additional notation. Given $k \in \N$, let $q_1,\dots,q_k$ be a family of $k$ distinct points in $\Sf^2$. For every $l \in \{1,\dots,k\}$ we set $S^1_l:=\Sf^2 \cap q_l^\perp$, where $q_l^\perp$ is the orthogonal complement of $q_l$. We also define 
	$$
	Q_k:=\{q_1,\dots,q_k\}, \qquad M_k:=\bigcup_{l=1}^k S^1_l, \qquad \mathfrak{L}_k:=\bigcup_{l=1}^k \textnormal{span}(q_l).
	$$
	We restrict the spin variable $u$ to take values only in $M_k$. Namely, we consider the subset $C_n(I;M_k)$ of $C_n(I;\Sf^2)$, defined as $C_n(I;\Sf^2)$ but with $M_k$ in place of $\Sf^2$, and we define the energy $H_n^{sl,k} \colon L^1(I) \to [0,+\infty]$ as
	\begin{equation}\label{Hslkn}
		H^{sl,k}_n(z):=
		\begin{cases*}
			\displaystyle \inf_{T_n(u)=z} H^{sl}_n(u)  & if $z=T_n(u)$ for some $u \in C_n(I;M_k)$, \\
			+\infty & otherwise.
		\end{cases*}
	\end{equation}
	Along the sequence the value $k$ and the $k$ points $\{q_1,\dots,q_k\}$ can depend on $n$. Thus, to highlight the possible dependence of $k$ on $n$, we will write $k_n$ in place of $k$. Under the assumption $k_n \delta_n^{1/4} \to 0$ we show that we can recover compactness and $\Gamma$-convergence for the functionals $H^{sl,k_n}_n$ to a non-trivial limit functional.
	
	\begin{prop}\label{prop: z compact < 1 quarto}
		Assume that $\lambda_n/\sqrt{\delta_n} \to 0$ and consider a sequence $k_n$ in $\N$ such that $k_n\delta_n^{1/4} \to 0$. Let $z_n=T_n(u_n)$ for some $u_n \in C_n(I,M_{k_n})$ be such that
		\begin{equation}\label{0-1}
			H^{sl,k_n}_n(z_n) \leq C \lambda_n \delta_n^{3/2}, \qquad\mbox{for some} \ \ C>0.
		\end{equation} 
	Then, denoting by $\overline{Q}$ the Kuratowski limit of the sets $\{Q_{k_n}\}_n$, there exists $z \in BV_{\textnormal{pc}}(I,\overline{Q})$ such that (up to subsequences) $z_n \to z$ strongly in $L^1(I)$.
	\end{prop}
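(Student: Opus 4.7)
My plan is to recast the rescaled energy as a discrete vector-valued Modica--Mortola functional in $z_n$ and exploit the rigidity of the constraint $u_n\in C_n(I,M_{k_n})$, combined with the hypothesis $k_n\delta_n^{1/4}\to 0$, in order to upgrade the trivial $L^\infty$-compactness of \cite{CiRuSol} into $BV_{\textnormal{pc}}$-compactness.

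The first step is a discrete energy decomposition in the spirit of the expansion used for the unconstrained $\Sf^2$ case: carefully expanding $|u_n^i-2(1-\delta_n)u_n^{i+1}+u_n^{i+2}|^2$ for each triple $(u_n^i,u_n^{i+1},u_n^{i+2})$ in terms of the rescaled chirality $z_n^i$ and rearranging, the rescaled energy satisfies
\begin{equation*}
\frac{H^{sl}_n(u_n)}{\lambda_n\delta_n^{3/2}}\;\gtrsim\;\sum_i\lambda_n\Bigl[\frac{(|z_n^i|^2-1)^2}{\sqrt{\delta_n}}+\sqrt{\delta_n}\,\Bigl|\frac{z_n^{i+1}-z_n^i}{\lambda_n}\Bigr|^2\Bigr]+o(1),
\end{equation*}
a discrete vector-valued Modica--Mortola functional with transition scale $\sqrt{\delta_n}$ and degenerate well set $\Sf^2$, which in particular gives an $L^\infty$ bound on $z_n$. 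Next I would use the constraint to pin down the effective well structure: whenever $u_n^i$ and $u_n^{i+1}$ lie on the same great circle $S^1_l$, the cross product $u_n^i\times u_n^{i+1}$ is colinear with $q_l$, so $z_n^i\in\R q_l\subset\mathfrak{L}_{k_n}$; consequently the low-energy values of $z_n$ cluster around the discrete set $\pm Q_{k_n}$.

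The heart of the argument is a classification of the two mechanisms by which $z_n$ can change well. A \emph{direct} bond requires $u_n^i\in S^1_l$, $u_n^{i+2}\in S^1_m$ with $u_n^{i+1}\in S^1_l\cap S^1_m$; expanding the rotations around the ground-state angle $\arccos(1-\delta_n)\sim\sqrt{2\delta_n}$ yields the per-bond estimate $|u_n^i-2(1-\delta_n)u_n^{i+1}+u_n^{i+2}|^2\gtrsim\delta_n|q_l-q_m|^2$, whose rescaled cost is at least $|q_l-q_m|^2/\sqrt{\delta_n}$. A \emph{through-zero} Modica--Mortola transition instead combines a scalar one-dimensional Modica--Mortola profile along a line $\R q_l$ with a nearly free circle switch at $|z|\approx 0$, contributing a fixed positive rescaled cost per transition. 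Thus any macroscopic change of well of size at least $a>0$ costs at least $\min(ca^2/\sqrt{\delta_n},c_0)\to c_0>0$ as $n\to\infty$. The role of the hypothesis $k_n\delta_n^{1/4}\to 0$ is to rule out the competing pathway in which many cheap direct jumps of nearest-neighbor size $\sim 1/k_n$ accumulate to produce a smooth macroscopic transition: in that regime each such jump has rescaled cost $\gtrsim (k_n^2\sqrt{\delta_n})^{-1}\to\infty$, so chains of direct jumps are energetically forbidden.

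Combining these estimates, the energy bound forces a uniform bound on the number of macroscopic changes of well of $z_n$, while small-scale oscillations between close points of $Q_{k_n}$ are absorbed by the Kuratowski limit $Q_{k_n}\to\overline{Q}$. A standard Caccioppoli-partition compactness argument (as in \cite[Section 4.5]{Ambrosio2000FunctionsOB}) then yields, up to subsequences, $z_n\to z$ strongly in $L^1(I)$ with $z\in BV_{\textnormal{pc}}(I,\overline{Q})$. The main technical obstacle is the sharp comparison of the two transition mechanisms precisely at the borderline scale $k_n\delta_n^{1/4}\to 0$, and controlling the combined effect of possibly many microscopic direct jumps between close points of a generic configuration $Q_{k_n}$ so that no continuous part of $Dz$ survives in the limit.
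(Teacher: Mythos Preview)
Your dichotomy between ``direct bonds at intersections'' and ``through-zero'' transitions has a genuine gap: it is not exhaustive. Nothing in the constraint $u_n\in C_n(I,M_{k_n})$ forces consecutive spins $u_n^i,u_n^{i+1}$ to lie on the same circle, nor does a circle switch require $u_n^{i+1}\in S^1_l\cap S^1_m$. For an \emph{arbitrary} configuration $Q_{k_n}$ some pairs of circles may be much closer than $1/k_n$ (the points need not be well separated), so the spin can hop from $S^1_l$ to $S^1_m$ far from any intersection while keeping $|u_n^{i+1}-u_n^i|\approx\sqrt{2\delta_n}$; in that case $z_n^i=u_n^i\times u_n^{i+1}/\sqrt{2\delta_n}$ is \emph{not} a priori close to any line $\mathbb{R}q_l$, your per-bond estimate does not apply, and the ``consequently the low-energy values of $z_n$ cluster around $\pm Q_{k_n}$'' is exactly the nontrivial statement that needs proof. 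Your cost estimate $(k_n^2\sqrt{\delta_n})^{-1}$ per nearest-neighbor jump tacitly assumes a uniform $1/k_n$ separation in $Q_{k_n}$ which is nowhere hypothesized.

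The paper closes this gap differently. Instead of a bond-by-bond cost analysis, it proves directly (Lemma~\ref{lem: dist of z}) that $\|\dist(z_n,\mathfrak{L}_{k_n})\|_{L^\infty}\lesssim \sqrt{k_n}\,\delta_n^{1/4}$ via a pigeonhole argument: if $z_n^i$ were far from \emph{every} line $\mathbb{R}q$, then monotonicity of a suitable component of $u_n$ forces the spin to leave each circle it visits and not return within the next $k_n$ steps, so $u_n$ would have to occupy $k_n+1$ distinct circles in $k_n+1$ consecutive indices, a contradiction. Steps~1--2 of the proof then use a second, finer pigeonhole-and-block argument to show that on any stretch where $|z_n|\ge\eta$ the \emph{direction} $\hat z_n$ is pinned to a single $q\in Q_{k_n}$ up to an error $O(k_n\delta_n^{1/4}/\eta)$; this is where the hypothesis $k_n\delta_n^{1/4}\to 0$ is actually used, and it is what rules out the slow drift you worry about at the end. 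Your proposal contains neither of these combinatorial ingredients, and the Modica--Mortola structure alone (with connected well set $\Sf^2$) cannot supply them.
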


	\begin{thm}\label{thm: Gamma lim < 1 quarto}
	Let $H^{sl,k_n}_n \colon L^1(I) \to [0,+\infty]$ be defined as in \eqref{Hslkn}. Assume that $\lambda_n/\sqrt{\delta_n} \to 0$ and $k_n \delta_n^{1/4} \to 0$. Then, the rescaled functionals $\frac{H^{sl,k_n}_n}{\sqrt{2}\lambda_n \delta_n^{3/2}}$ $\Gamma$-converge with respect to the strong $L^1$-topology to the functional
	\begin{equation*}
		\begin{cases*}
				\displaystyle \frac{8}{3} \# S(z) & if $z \in BV_{\textnormal{pc}}(I,\overline{Q})$; \\
				+\infty & otherwise.
		\end{cases*}
	\end{equation*}
	\end{thm}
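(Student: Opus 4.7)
The plan is to decouple the analysis into compactness, a $\Gamma$-liminf bound, and a matching $\Gamma$-limsup bound. Proposition~\ref{prop: z compact < 1 quarto} already supplies the compactness, forcing any cluster point $z$ of sequences with $H^{sl,k_n}_n(u_n)\le C\lambda_n\delta_n^{3/2}$ to belong to $BV_{\textnormal{pc}}(I,\overline Q)$. Thus only the identification of the sharp per-jump cost as $\tfrac{8}{3}$ remains, in line with the unconstrained $\Sf^1$ case of \cite{CiSol}, but now in presence of the $n$-dependent constraint $u_n\in M_{k_n}$.

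\textbf{Lower bound.} Let $u_n\in C_n(I,M_{k_n})$ satisfy $z_n=T_n(u_n)\to z$ in $L^1$. Expanding $|u_n^i-2(1-\delta_n)u_n^{i+1}+u_n^{i+2}|^2$ using $|u_n^j|=1$ and $|u_n^j\times u_n^{j+1}|^2=2\delta_n|z_n^j|^2$, one rewrites the rescaled energy, as in \cite{CiRuSol}, as a discrete vectorial Modica--Mortola-type functional of the schematic form
\begin{equation*}
\frac{H^{sl,k_n}_n(u_n)}{\sqrt 2\,\lambda_n\delta_n^{3/2}}\ \geq\ \sum_i \Bigl(\tfrac{\lambda_n}{2\sqrt{\delta_n}}\,|z_n^{i+1}-z_n^i|^2+\tfrac{\sqrt{\delta_n}}{2}\,\lambda_n\bigl(1-|z_n^i|^2\bigr)^2\Bigr)+o(1),
\end{equation*}
up to an additional term which penalizes $u_n$ leaving the cone $\mathfrak L_{k_n}$. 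Localizing around each jump point $x_0\in S(z)$ via a blow-up at scale $\lambda_n/\sqrt{\delta_n}$ and applying $a^2+b^2\ge 2|ab|$ yields
\begin{equation*}
\liminf_n\frac{H^{sl,k_n}_n(u_n)}{\sqrt 2\lambda_n\delta_n^{3/2}}\ \geq\ \sum_{x_0\in S(z)}2\int\bigl(1-|\zeta|^2\bigr)|\zeta'|\,dt,
\end{equation*}
where $\zeta$ is an admissible blow-up profile connecting the two traces in $\overline Q$. Reducing to the scalar variable $\rho=|\zeta|$, which must pass through $0$ since the two traces are distinct points of $\overline Q\subset\Sf^2$, the right-hand side is bounded below by $2\cdot 2\int_0^1(1-\rho^2)\,d\rho=\tfrac{8}{3}$ per jump, exactly as in \cite[Theorem 3.1]{CiSol}.

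\textbf{Upper bound.} By a standard density-in-energy argument for $BV_{\textnormal{pc}}$, it suffices to treat $z$ with a single jump at $x_0\in(0,1)$ from $\overline q^-\in\overline Q$ to $\overline q^+\in\overline Q$. By Kuratowski convergence there exist $q^\pm_n\in Q_{k_n}$ with $q^\pm_n\to\overline q^\pm$. Outside a transition layer $(x_0-\rho_n,x_0+\rho_n)$, with $\lambda_n/\sqrt{\delta_n}\ll\rho_n\ll 1$, I would define $u_n$ as a discrete helical ground state lying on the circle $S^1_{q^\mp_n}$, for which the renormalized energy vanishes. Inside the layer I would paste two rotations of the optimal one-dimensional transition profile of \cite{CiSol}, the first taking values in $S^1_{q^-_n}$ and the second in $S^1_{q^+_n}$, joined through the common unit vector belonging to $S^1_{q^-_n}\cap S^1_{q^+_n}=\{\pm(q^-_n\times q^+_n)/|q^-_n\times q^+_n|\}$, where $|z_n|$ crosses zero. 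A direct computation analogous to \cite[Section 4]{CiSol} shows that this competitor has energy $\tfrac{8}{3}+o(1)$ per jump, provided the Kuratowski approximation error $|q^\pm_n-\overline q^\pm|\lesssim 1/k_n$ is negligible with respect to the natural Modica--Mortola healing length $\sim\delta_n^{1/4}$: this is exactly the content of the assumption $k_n\delta_n^{1/4}\to 0$.

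\textbf{Main obstacle.} The delicate step is the lower bound. The constraint $u_n\in M_{k_n}$ in principle allows $u_n$ to connect $\overline q^-$ to $\overline q^+$ by chaining short \emph{hops} between neighbouring circles $S^1_{q_l}$ inside the transition layer, i.e. by travelling along a finely discretized path in the cone $\mathfrak L_{k_n}$ without ever making $|z_n|$ small. One must quantify the cost of such inter-circle hops and show that in the regime $k_n\delta_n^{1/4}\to 0$ their sum cannot undercut $\tfrac{8}{3}$. This will require a two-scale analysis coupling the Modica--Mortola rigidity on each individual circle with an explicit lower bound on the single-hop price in terms of the discrete well spacing $\sim 1/k_n$. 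The sharpness of this quantitative threshold is precisely what Theorem~\ref{thm: critical case} witnesses in the borderline regime $k_n\sim\delta_n^{-1/4}$.
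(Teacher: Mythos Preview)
Your upper bound is essentially the paper's construction: approximate $\overline q^\pm$ by $q_n^\pm\in Q_{k_n}$ via Kuratowski convergence, build a discrete $\tanh$-type profile whose spin lies on $(q_n^-)^\perp\cap\Sf^2$ on one side and on $(q_n^+)^\perp\cap\Sf^2$ on the other, gluing through a common point of the two circles where $|z_n|$ vanishes. One correction: the hypothesis $k_n\delta_n^{1/4}\to 0$ plays \emph{no} role in the upper bound. The paper's recovery sequence uses only Kuratowski convergence of $Q_{k_n}$ to $\overline Q$ and $\lambda_n/\sqrt{\delta_n}\to 0$; there is no comparison of $|q_n^\pm-\overline q^\pm|$ with a healing length, and no a priori rate $\lesssim 1/k_n$ is available anyway.

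The genuine gap is the lower bound, and you locate it honestly: the claim ``$|\zeta|$ must pass through $0$'' is exactly what fails in the unconstrained $\Sf^2$ model and must be \emph{derived} from the constraint. Your suggested fix --- pricing individual inter-circle hops and summing --- is not what the paper does, and it is unclear how to make it sharp. The paper's mechanism is different: it shows that finite energy confines $z_n$ to thin tubes around the \emph{finitely many} limit values $q_\alpha\in\overline Q$ of $z$, not merely to $\mathfrak L_{k_n}$. Two ingredients feed into this. First, Lemma~\ref{lem: dist of z}, a pigeon-hole argument (among any $k_n{+}1$ consecutive spins two must lie on the same circle), gives $\dist(z_n,\mathfrak L_{k_n})\lesssim\sqrt{k_n}\,\delta_n^{1/4}$. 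Second, Steps~1--2 in the proof of Proposition~\ref{prop: z compact < 1 quarto} iterate this to an oscillation estimate: on any stretch where $|z_n|\ge\eta$, the direction $\hat z_n$ varies by at most $Ck_n\delta_n^{1/4}/\eta$. Choosing $\eta=\sqrt{k_n}\,\delta_n^{1/8}$ then confines $z_n$ to tubes of radius $r_n\sim\sqrt{k_n}\,\delta_n^{1/8}$ around the $q_\alpha$'s, and $k_n\delta_n^{1/4}\to 0$ is precisely what makes $r_n\to 0$. At this point the problem becomes an instance of the abstract discrete phase-field framework of Section~\ref{susect} with $\varepsilon_n=\lambda_n/\sqrt{2\delta_n}$, and Theorem~\ref{G liminf} delivers the sharp $8/3$ cost per jump because distinct tubes intersect only near the origin. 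So the assumption $k_n\delta_n^{1/4}\to 0$ enters the \emph{liminf}, not the limsup.
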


	The proof of Proposition \ref{prop: z compact < 1 quarto} and Theorem \ref{thm: Gamma lim < 1 quarto} is based on some general results about discrete phase field functionals with co-domain constraints which we present in Section \ref{susect}.
	
	Thanks to Proposition \ref{prop: z compact < 1 quarto} and Theorem \ref{thm: Gamma lim < 1 quarto} we can study the asymptotic behavior of the renormalized XY-model, that is, the energy $H_n$ defined in \eqref{energy Hn} scaled by $\lambda_n \delta_n^{3/2}$, in the limit of strong ferromagnetic interactions. To be precise, we assume that
	\begin{equation}\label{assumptions}
	\frac{\lambda_n}{\sqrt{\delta_n}} \to 0, \qquad J_{2,n} \frac{\lambda_n}{\sqrt{\delta_n}} \geq c >0.
	\end{equation}
	We embed the energies in a common function space, to this end we identify every function $u \in \overline{\mathcal{U}}^2(\Omega;\Sf^2)$ with its piecewise constant interpolation belonging to the space
	$$
	C_{n,2}(\Omega;\Sf^2):=\left\{ u \in \overline{\mathcal{U}}^2(\Omega;\Sf^2) \colon \ u(x)=u(\lambda_n i) \mbox{ if $x \in \lambda_n(i+[0,1)^2), i \in \Z^2_n(\Omega)$}\right\}.
	$$
	We can extend the functional $H_n$ to a functional defined on $L^\infty(\Omega;\Sf^2)$ setting
	\begin{equation*}
		H_n(u):=
		\begin{cases*}
			H_n(u) & if $u \in C_{n,2}(\Omega;\Sf^2)$, \\
			+\infty & otherwise.
		\end{cases*}
	\end{equation*}
	Given $u \in C_{n,2}(\Omega;\Sf^2)$ we define $z \in C_{n,2}(\Omega;\R^3)$ as
	$$
	z^i := \frac{u^i \times u^{i+e_1}}{\sqrt{2\delta_n}}
	$$
	and write for short $z:=T_n(u)$. Given a family of $k$ points $Q_k \subset \Sf^2$, we can define the constrained energy $H_n^k \colon L^1(\Omega;\R^3) \to [0,+\infty]$ setting
	\begin{equation}\label{Hkn}
		H^{k}_n(z):=
		\begin{cases*}
			\displaystyle \inf_{T_n(u)=z} H_n(u)  & if $z=T_n(u)$ for some $u \in C_{n,2}(\Omega;M_k)$, \\
			+\infty & otherwise.
		\end{cases*}
	\end{equation}
	The following result holds.
	\begin{thm}
		Let $H^k_n \colon L^1(\Omega;\R^3) \to [0,+\infty]$ be defined as in \eqref{Hkn}, assume that $k_n \delta_n^{1/4} \to 0$ and \eqref{assumptions} hold. Then, the rescaled functionals $\frac{H_n^k}{\sqrt{2}\lambda_n \delta_n^{\frac{3}{2}}}$ $\Gamma$-converge with respect to the strong $L^1$-topology to the functional
		\begin{equation*}
			\begin{cases*}
				\displaystyle \frac{8}{3} \mathcal{H}^1(J_z) & if $z \in BV_{\textnormal{pc}}(\Omega,\overline{Q})$ and $z$ does not depend on $y$; \\
				+\infty & otherwise.
			\end{cases*}
		\end{equation*}
		Above, $\overline{Q}$ is defined as in Proposition \ref{prop: z compact < 1 quarto}.
	\end{thm}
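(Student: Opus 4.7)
The plan is to reduce the 2D problem to the 1D result of Theorem \ref{thm: Gamma lim < 1 quarto} by slicing along the horizontal axis, using the strong ferromagnetic vertical term to force the limit to be $y$-independent. A discrete Fubini-type identity gives that the F/AF horizontal part of $H_n(u)$ coincides with $\lambda_n \sum_{i_2} H_n^{sl}(u(\cdot,i_2\lambda_n))$, i.e., a Riemann sum of 1D sliced energies. Under assumption \eqref{assumptions}, the vertical term $\tfrac12 J_{2,n}\sum_i \lambda_n^2|u_n^{i+e_2}-u_n^i|^2 \leq C\lambda_n\delta_n^{3/2}$ yields
\begin{equation*}
\|\partial_{e_2}\tilde u_n\|_{L^2(\Omega)}^2 \leq C\delta_n, \qquad \|\partial_{e_2}\tilde z_n\|_{L^2(\Omega)}^2 \leq C,
\end{equation*}
the second following from the first by expanding $z_n^{i+e_2}-z_n^i$ via the cross-product Leibniz rule and dividing by $\sqrt{2\delta_n}$.

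\medskip

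\textbf{Compactness and lower bound.} Let $z_n = T_n(u_n)$ satisfy $H_n^{k_n}(z_n)/(\lambda_n\delta_n^{3/2}) \leq C$. The slicewise inequality $\lambda_n \sum_{i_2} H_n^{sl,k_n}(z_n(\cdot, i_2\lambda_n)) \leq C\lambda_n\delta_n^{3/2}$ combined with Fatou's lemma and Proposition \ref{prop: z compact < 1 quarto} yields, for a.e.\ $y$ along a subsequence, $z_n(\cdot,y) \to z(\cdot,y) \in BV_{\textnormal{pc}}(I, \overline Q)$ in $L^1(I)$; dominated convergence then gives $z_n \to z$ in $L^1(\Omega)$. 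Cauchy--Schwarz in $y$ applied to the bound on $\partial_{e_2}\tilde z_n$ yields the uniform H\"older estimate $\|z_n(\cdot,y_2)-z_n(\cdot,y_1)\|_{L^2(I)}\leq C\sqrt{|y_2-y_1|}$, which passes to the limit. If $J_z$ had a component with non-vertical normal at some $y_0$, the one-sided traces $z^\pm(\cdot,y_0)$ would differ on a set of positive $\mathcal H^1$-measure, forcing $\|z(\cdot, y_0+\varepsilon) - z(\cdot, y_0-\varepsilon)\|_{L^2(I)} \geq c > 0$ as $\varepsilon\to 0^+$ and contradicting the H\"older estimate. Hence $J_z$ consists of vertical segments only, so $z(x,y) = \overline z(x)$ for some $\overline z \in BV_{\textnormal{pc}}(I,\overline Q)$. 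Combining Fatou with the 1D lower bound of Theorem \ref{thm: Gamma lim < 1 quarto},
\begin{equation*}
\liminf_n \frac{H_n^{k_n}(z_n)}{\sqrt{2}\lambda_n\delta_n^{3/2}} \geq \int_0^1 \frac{8}{3}\#S(z(\cdot,y))\,dy = \frac{8}{3}\mathcal{H}^1(J_z).
\end{equation*}

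\medskip

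\textbf{Upper bound.} For $z(x,y) = \overline z(x)$ with $\overline z \in BV_{\textnormal{pc}}(I,\overline Q)$, Theorem \ref{thm: Gamma lim < 1 quarto} supplies a 1D recovery sequence $\overline u_n \in C_n(I;M_{k_n})$ with $T_n(\overline u_n) \to \overline z$ in $L^1(I)$ and $H_n^{sl,k_n}(T_n(\overline u_n))/(\sqrt{2}\lambda_n\delta_n^{3/2}) \to \tfrac{8}{3}\#S(\overline z)$. Extending trivially via $u_n(x,y):=\overline u_n(x)$ yields a competitor in $C_{n,2}(\Omega;M_{k_n})$ whose vertical energy vanishes, while the Fubini identity reduces the horizontal energy to $H_n^{sl,k_n}(T_n(\overline u_n))$ up to a boundary error of order $\lambda_n$, delivering the upper bound.

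\medskip

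The main technical obstacle is the $y$-independence of the limit. The bound $\|\partial_{e_2}\tilde z_n\|_{L^2}^2\leq C$ does not vanish, so $y$-independence cannot be read off directly from a derivative estimate. The crucial observation is that, combined with the piecewise-constant $\overline Q$-valued structure of $z$, even the mild H\"older-$\tfrac12$ continuity of $y\mapsto z(\cdot,y)$ in $L^2(I)$ is already incompatible with any horizontal jump in the Caccioppoli partition defining $z$.
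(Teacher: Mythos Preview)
Your overall strategy (slice, use the 1D result, control the $y$-variation via the ferromagnetic term) is the same route the paper takes, and the upper bound construction is fine. However, your argument for the $y$-independence of the limit has a genuine gap.

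You correctly derive the H\"older-$\tfrac12$ estimate
\[
\|z(\cdot,y_2)-z(\cdot,y_1)\|_{L^2(I)}\le C|y_2-y_1|^{1/2},
\]
and you observe that this rules out any \emph{horizontal} portion of $J_z$ (normal $\nu_z=\pm e_2$): across such a segment the $L^2$-traces $z^\pm(\cdot,y_0)$ differ on a set of positive measure, contradicting continuity. But ruling out horizontal components does \emph{not} imply $J_z$ is purely vertical. Consider $z(x,y)=q_1\mathbf 1_{\{x<y\}}+q_2\mathbf 1_{\{x>y\}}$ with $q_1\neq q_2\in\overline Q$: every slice $z(\cdot,y)$ lies in $BV_{\textnormal{pc}}(I,\overline Q)$, the $y$-traces $z^\pm(\cdot,y_0)$ agree for every $y_0$, and
\[
\|z(\cdot,y_2)-z(\cdot,y_1)\|_{L^2(I)}^2=|q_1-q_2|^2\,|y_2-y_1|,
\]
so H\"older-$\tfrac12$ holds. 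Your argument therefore does not exclude oblique interfaces, and the step ``hence $J_z$ consists of vertical segments only'' is unjustified. Since $\|\partial_{e_2}\tilde z_n\|_{L^2}^2$ is only bounded (not vanishing) under \eqref{assumptions}, no estimate at the level of $z_n$ alone can force $y$-independence; one has to go back to the spin field $u_n$. Heuristically, an oblique interface forces the 1D transition profiles on adjacent rows to be shifted by one lattice step, so that $|u_n^{i+e_2}-u_n^i|\sim\sqrt{\delta_n}$ throughout a strip of width $\lambda_n/\sqrt{\delta_n}$; the resulting vertical energy scales like $J_{2,n}/\delta_n\ge c/(\lambda_n\sqrt{\delta_n})\to\infty$, which is the actual obstruction. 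Turning this into a rigorous lower bound is precisely the content of the argument the paper borrows from \cite[Section~4]{CiRuSol}, and it requires comparing the full 1D profiles $u_n(\cdot,j)$ and $u_n(\cdot,j')$ rather than just their chiralities.

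A second, smaller issue: your compactness step uses slicewise 1D compactness to produce, for a.e.\ $y$, a convergent subsequence of $z_n(\cdot,y)$, and then invokes dominated convergence to get $z_n\to z$ in $L^1(\Omega)$. The subsequence extracted in Proposition~\ref{prop: z compact < 1 quarto} depends on $y$, so this step as written is circular. One has to first obtain $L^1(\Omega)$-compactness by an independent argument (e.g.\ combining the uniform H\"older estimate in $y$ with 1D compactness on a countable dense set of heights, via Arzel\`a--Ascoli in $C^{0,1/2}([0,1];L^2(I))$) and only afterwards identify the slices.
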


	\begin{proof}
		The proof of this result follows by Proposition \ref{prop: z compact < 1 quarto} and Theorem \ref{thm: Gamma lim < 1 quarto} arguing exactly as in Section 4 of \cite{CiRuSol}. Hence, we omit it.
	\end{proof}

	Finally, if $k_n \sim \delta_n^{-1/4}$ we show that the previous considerations do not hold in general. Indeed, we have the following result.
	
	\begin{thm}\label{thm: critical case}
	Assume that $\frac{\lambda_n}{\delta_n^{3/4}} \to 0$ as $n \to +\infty$ and let $k_n=2\lceil \delta_n^{-1/4} \rceil$. There exist configurations $Q_{k_n}$ and two constants $C_1,C_2>0$ such that 
	$$
	C_1 |Dz|(I) \leq \Gamma \mbox{-} \liminf_{n \to +\infty} \frac{H^{sl,k_n}_n}{\sqrt{2}\lambda_n \delta_n^{3/2}}(z) \leq  \Gamma \mbox{-}\limsup_{n \to +\infty} \frac{H^{sl,k_n}_n}{\sqrt{2}\lambda_n \delta_n^{3/2}}(z) \leq C_2 |Dz|(I).
	$$
	for every $z \in BV(I,\overline{Q})$. 
	\end{thm}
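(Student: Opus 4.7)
I would take $Q_{k_n}$ to consist of $k_n = 2\lceil \delta_n^{-1/4}\rceil$ equispaced points on a fixed great circle $\mathcal{C}\subset \Sf^2$. With this choice the Kuratowski limit $\overline{Q}$ coincides with $\mathcal{C}$, which is isometric to $\Sf^1$, so $BV(I,\overline{Q})$ can be parametrized by $\Sf^1$-valued $BV$ functions. The angular spacing between adjacent $q_l$'s is $\eta_n := 2\pi/k_n \sim \delta_n^{1/4}$. A useful geometric feature of this configuration is that all circles $S^1_l = \Sf^2\cap q_l^\perp$ share the two poles of the axis orthogonal to $\mathcal{C}$, which considerably simplifies the construction of spin fields that transition between different copies of $\Sf^1$.

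\textbf{Upper bound.} For $z\in BV(I,\overline{Q})$, I would build a recovery sequence by first approximating $z$ in $L^1$ by piecewise constant functions $\tilde{z}_n$ valued in $Q_{k_n}$ whose jumps occur only between adjacent wells, with $|D\tilde{z}_n|(I)\le |Dz|(I)+o(1)$ (density of such step functions in $BV(I,\overline Q)$). For each such elementary jump, I would construct a discrete spin field by interpolating between helical ground states with rotation axes $q_l$ and $q_{l+1}$ via a discrete Modica-Mortola-type optimal profile, gluing the two helical pieces at a common pole of $S^1_l\cap S^1_{l+1}$. A direct energy estimate, analogous to the $\Gamma$-limsup computation of Theorem \ref{thm: Gamma lim < 1 quarto} but rescaled to the small angular distance $\eta_n$ between adjacent wells, should give a cost $O(\lambda_n\delta_n^{3/2}\eta_n)$ per jump. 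Since the number of elementary jumps needed to approximate $z$ is at most of order $|Dz|(I)/\eta_n$, the total cost is $O(\lambda_n\delta_n^{3/2}|Dz|(I))$, yielding the desired upper bound $\le C_2|Dz|(I)$ after rescaling.

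\textbf{Lower bound.} For $z_n = T_n(u_n)$ with $z_n\to z$ in $L^1$ and rescaled energy bounded, the strategy is to use a discrete Modica-Mortola-type slicing argument. Exploiting the structure of $H_n^{sl,k_n}$ in terms of angular increments, I would establish a discrete Young-type inequality of the form
\begin{equation*}
C\,\lambda_n\delta_n^{3/2}\,|D\widetilde{z}_n|(I) \le H_n^{sl,k_n}(z_n) + o(\lambda_n\delta_n^{3/2}),
\end{equation*}
where $\widetilde{z}_n$ denotes the affine interpolant of $z_n$. Combining this with $L^1$-lower semicontinuity of total variation under $z_n\to z$ yields $C_1|Dz|(I)\le\Gamma\text{-}\liminf_n H_n^{sl,k_n}(z)/(\sqrt{2}\lambda_n\delta_n^{3/2})$. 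A compactness statement ensuring that the limit $z$ indeed lies in $BV(I,\overline{Q})$ would be obtained as a byproduct, in the spirit of Proposition \ref{prop: z compact < 1 quarto} but adapted to the critical regime.

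\textbf{Main obstacle.} The delicate point is the lower bound. In the subcritical regime $k_n\ll\delta_n^{-1/4}$ of Theorem \ref{thm: Gamma lim < 1 quarto}, wells are well separated relative to the Modica-Mortola transition length $\sqrt{\delta_n}$, so each chirality transition contributes the quantized cost $8/3$. In the critical regime $k_n\sim\delta_n^{-1/4}$, instead, the angular spacing $\eta_n$ is comparable to the characteristic transition scale, so adjacent transition profiles overlap and can no longer be localized independently. Consequently, the constants $C_1,C_2$ are expected to differ and the exact $\Gamma$-limit depends on the fine geometry of transitions and the traces of $z$, as remarked in the introduction. The core technical task is to extract a sharp total-variation lower bound at this borderline scale; this likely requires a mesoscopic coarse-graining between the scales $\sqrt{\delta_n}$ and $1$, rather than the sharp blow-up arguments that suffice in the subcritical case.
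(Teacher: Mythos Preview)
Your upper bound strategy has a genuine gap, and its root is the choice of configuration. With $Q_{k_n}$ consisting of $k_n$ equispaced points on a great circle, the circles $S^1_l$ are meridians at angular spacing $\eta_n\sim\delta_n^{1/4}$. A Modica--Mortola transition between adjacent wells does \emph{not} rescale with $\eta_n$: the potential $(|z|^2-1)^2$ depends only on $|z|$, so the optimal profile through the origin costs $8/3$ independently of the angle between $q_l$ and $q_{l+1}$ --- this is precisely the content of Theorem~\ref{thm: Gamma lim < 1 quarto}. The alternative you hint at, switching meridians at the shared pole without slowing down, produces a gradient jump $|z^{i}-z^{i-1}|\sim\eta_n$, whose contribution to the rescaled energy is $|z^{i}-z^{i-1}|^2/\sqrt{2\delta_n}\sim\delta_n^{1/2}/\delta_n^{1/2}\sim 1$. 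Either way each elementary transition costs $\sim 1$, and since realizing $|Dz|\sim 1$ requires $\sim\delta_n^{-1/4}$ such transitions, your recovery sequence has diverging energy.

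The paper's configuration is two-scale and this is essential. It takes $Q_{k_n}=\{q_j^h\}_{j=1}^{k_n/2}\cup\{q_m^v\}_{m=1}^{k_n/2}$, where the ``horizontal'' $q_j^h$ cluster near a single point and the associated circles are meridians at angular spacing $\sqrt{2\delta_n}$ --- exactly the spin step size --- while the ``vertical'' $q_m^v$ sit at spacing $\sim\delta_n^{1/4}$ along a curve (the actual target chiralities, giving the nontrivial part of $\overline Q$). The recovery sequence for a jump $q_m^v\to q_{m+1}^v$ lets the spin hop between \emph{consecutive horizontal} meridians at full speed (so $|z|\approx 1$, no potential cost) while drifting slowly in the vertical direction via a smooth profile $f$; this spreads the chirality change over $k_n/2\sim\delta_n^{-1/4}$ steps with $|z^{i+1}-z^i|\sim\sqrt{\delta_n}$ each, giving total gradient cost $\sim\delta_n^{-1/4}\cdot\delta_n/\sqrt{\delta_n}=\delta_n^{1/4}\sim|q_{m+1}^v-q_m^v|$. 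Your single-scale configuration has no such stepping stones.

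The paper's lower bound is also structurally different from the Young-inequality sketch you propose. It proceeds by first establishing (via an iterated geometric argument, Steps~1--2) that on every block of $k_n$ consecutive indices with $|z_n|\ge\eta$, some $\hat z_n^\ell$ is $O(\delta_n^{3/8}/\eta^2)$-close to a point of $Q_{k_n}$; then (Step~3) a path-length/counting argument on the distinct $q^s$'s visited yields $F_n([x_1,x_2],z_n)\ge c\,|z_n(x_2)-z_n(x_1)|$ whenever this difference exceeds $10\,\delta_n^{1/32}$; finally (Step~4) a partition of $I$ at this scale produces a piecewise-constant approximant $w_n$ with $|Dw_n|(I)\le C F_n(I,z_n)$, and compactness plus lower semicontinuity of the total variation conclude.
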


	\begin{rem}
		Theorem \ref{thm: critical case} implies that in the critical case $k_n \sim \delta_n^{-1/4}$, there exists configurations such that the $\Gamma$-limit behaves asymptotically as a total variation.
		In particular, in the aforementioned case, we have that for a general $z \in BV_{\textnormal{pc}}(I,\overline{Q})$
		$$
		\Gamma \mbox{-} \liminf_{n \to +\infty} \frac{H_n^{sl,k_n}}{\sqrt{2}\lambda_n\delta_n^{3/2}}(z) \neq \frac{8}{3}S(z) \qquad \mbox{and} \qquad \Gamma \mbox{-}\limsup_{n \to +\infty} \frac{H_n^{sl,k_n}}{\sqrt{2}\lambda_n\delta_n^{3/2}}(z) \neq \frac{8}{3}S(z).
		$$
		Moreover, both the $\Gamma \mbox{-}\liminf$ and the $\Gamma \mbox{-}\limsup$ may in general depend on the traces $z^+$ and $z^-$ of a function $z \in BV_{\textnormal{pc}}(I,\overline{Q})$ on jump points. The analysis of such a dependence requires additional discussions which would exceed the scope of this paper.
	\end{rem}

	\section{Discrete phase transitions with co-domain constraints}\label{susect}
	
	This section contains the results regarding phase field functionals defined on a discretized domain and with co-domain constraints. First, we fix some notation.
	
	Let $|\cdot|_A \colon \R^N \to \R$ be a norm on $\R^N$ such that there exist $c_1,c_2>0$ with the property that
	\begin{equation}\label{A norm}
	c_1 |\xi| \leq |\xi|_A \leq c_2 |\xi| \qquad \mbox{for every $\xi \in \R^N$},
	\end{equation}
	and the set $\{\xi \in \R^N \colon \ |\xi|_A=1\}=:K$ is a compact subset of $\R^N$.
	Let $\varepsilon_n$ and $r_n$ be two vanishing positive sequences, and let $k_n \in \N$ a possibly diverging sequence. For every $n \in \N$, we consider $k_n$ points $\{q_j^n\}_{j=1}^{k_n} \subset K$. Let $Q_n:=\{q_j^n\}_{j=1}^{k_n}$ and $\overline{Q} \subseteq K$ the Kuratowski limit of the sets $Q_n$ as $n \to +\infty$. We set $\mathfrak{L}^n_j:=\textnormal{span}(q_n^j)+B_{r_n}$, for $j=1,\dots,k_n$ and $\mathfrak{L}^n:=\cup_{j=1}^{k_n} \mathfrak{L}_j^n$. We make the following assumptions:
	\begin{itemize}
		\item[(i)] $\frac{\lambda_n }{\varepsilon_n r_n^2}$ is a bounded sequence (the cylinders $\mathfrak{L}_j^n$ must not be too close to one another);
		\item[(ii)] for every $\eta>0$, there exists $\overline{n}=\overline{n}(\eta)$ such that for every $n \geq \overline{n}$ we have
		$$
		\mathfrak{L}_j^n \cap \mathfrak{L}_m^n \subset B_\eta, \qquad \mbox{for every $j,m=1,\dots,k_n$ with $j \neq m$.}
		$$
	\end{itemize}

	Set $g_n \colon \R^N \to \R^+ \cup \{+\infty\}$ as
	\begin{equation}\label{gn}
		g_n(\xi):=
		\begin{cases*}
			(|\xi|_A^2-1)^2 & $\xi \in \mathfrak{L}^n$; \\
			+\infty & otherwise.
		\end{cases*}
	\end{equation}

	We set $R_n(\Omega) \subset \Z^d$ as the set of all $i \in \Z_n^d(\Omega)$ such that $i+e_\ell \in \Z_n^d(\Omega)$ for every $\ell=1,\dots,d$. We are interested in the asymptotic analysis of the following energies for $z_n \in \mathcal{U}^d_n(\Omega;\R^{N})$
	\begin{equation}\label{01}
		\mathcal{F}_n(z_n):=\frac{1}{\varepsilon_n} \sum_{i \in R_n(\Omega)} \lambda_n^d g_n(z_n^i)+\varepsilon_n \sum_{i \in R_n(\Omega)} \lambda_n^d \left( \sum_{\ell=1}^d \left| \frac{z_n^i-z_n^{i+ e_\ell}}{\lambda_n} \right|^2 \right).
	\end{equation}

	The following compactness result holds true.

	\begin{prop}\label{prop: cpt MM}
		Let $z_n \in \mathcal{U}^d_n(\Omega;\R^N)$ be a sequence such that for every $n \geq 1$
		\begin{equation}\label{MM bound}
			\mathcal{F}_n(z_n) \leq C \qquad \mbox{for some $C>0$}.
		\end{equation}
		Then, there exists $z \in SBV_{\textnormal{pc}}(\Omega;\overline{Q})$ such that, up to subsequences, $z_n \to z$ in $L^1(\Omega;\R^N)$.
	\end{prop}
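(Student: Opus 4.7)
The strategy is to approximate $z_n$ by a lattice function $\tilde z_n$ taking values in the discrete well set $W_n := \{\pm q_j^n : j = 1,\dots,k_n\}$, to show that $\|z_n - \tilde z_n\|_{L^1(\Omega)} \to 0$ while $\tilde z_n$ has uniformly bounded jump perimeter, and then to conclude by Ambrosio's $SBV$-compactness theorem combined with the Kuratowski convergence $Q_n \to \overline Q$.

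\emph{Construction of $\tilde z_n$.} Finiteness of $g_n$ forces every $z_n^i$ to lie in some cylinder $\mathfrak L^n_{j(i)}$, so we may write $z_n^i = t_i\, q_{j(i)}^n + b_i$ with $|b_i| \leq r_n$. The bulk bound $\frac{\lambda_n^d}{\varepsilon_n} \sum_i g_n(z_n^i) \leq \mathcal F_n(z_n) \leq C$ forces the ``small-modulus'' set $\mathcal B_n := \{i : |z_n^i|_A \leq 1/2\}$ to satisfy $|\mathcal B_n|\lambda_n^d = O(\varepsilon_n)$, while on $\mathcal B_n^c$ one has $|t_i| \geq 1/2 - c_2 r_n$, so $\mathrm{sign}(t_i)$ is well-defined. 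Define $\tilde z_n^i := \mathrm{sign}(t_i)\, q_{j(i)}^n$ on $\mathcal B_n^c$ and choose $\tilde z_n^i$ arbitrarily in $W_n$ on $\mathcal B_n$; a finer thresholding $g_n(z_n^i) \leq \varepsilon_n^{1/2}$ shows that $|t_i| \to 1$ off a vanishing set, so together with $r_n \to 0$ one obtains $\|z_n - \tilde z_n\|_{L^1(\Omega)} \to 0$.

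\emph{Uniform jump perimeter.} A slicing argument reduces the estimate to one-dimensional lines parallel to $e_\ell$. On such a line, Young's inequality gives
\begin{equation*}
\sum_i \lambda_n^d \left[\tfrac{1}{\varepsilon_n}\, g_n(z_n^i) + \varepsilon_n\, \Bigl|\tfrac{z_n^{i+e_\ell} - z_n^i}{\lambda_n}\Bigr|^2\right] \geq 2 \lambda_n^{d-1} \sum_i \sqrt{g_n(z_n^i)}\, |z_n^{i+e_\ell} - z_n^i|,
\end{equation*}
the right-hand side being a Riemann-sum approximation of the Riemannian length (with respect to the metric $\sqrt{g_n}\,|d\xi|$) of the piecewise affine interpolant of $z_n$ along the slice. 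A sign change of $\tilde z_n$ forces this interpolant to cross a transition barrier between two distinct wells of $g_n$: either a bridge through $B_\eta$ joining two distinct cylinders (by (ii)) or the level set $\{|\xi|_A = 0\}$ within a single cylinder, for a sign flip. Assumption (i), which controls the resolution of the Modica-Mortola profile relative to the lattice, then yields a uniform positive width $\sigma > 0$ for every such barrier; summing over slices gives $\mathcal H^{d-1}(J_{\tilde z_n}) \leq \sigma^{-1} \mathcal F_n(z_n) \leq C'$.

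\emph{Conclusion and main obstacle.} Being piecewise constant on lattice cubes, valued in the bounded set $K$, and with uniformly bounded jump perimeter, the sequence $\tilde z_n$ satisfies the hypotheses of Ambrosio's $SBV$-compactness theorem: up to a subsequence $\tilde z_n \to z$ in $L^1(\Omega)$ with $z \in SBV(\Omega, \R^N)$; since $\nabla \tilde z_n \equiv 0$ the approximate gradient of the limit vanishes, giving $z \in SBV_{\mathrm{pc}}$. Combined with the first step, $z_n \to z$ in $L^1$, and Kuratowski convergence $Q_n \to \overline Q$ forces the a.e.\ values of $z$ into $\overline Q$. The delicate step is the uniform lower bound $\sigma > 0$ on the Modica-Mortola barrier width: as $k_n \to \infty$ and the axes $q_j^n$ possibly accumulate, one must verify that assumptions (i)--(ii), together with the coercivity of $(|\cdot|_A^2-1)^2$, really preclude the barrier from collapsing uniformly in $n$, which requires a careful quantitative interplay between $\lambda_n$, $\varepsilon_n$, $r_n$ and a delicate analysis near the origin where the cylinders overlap.
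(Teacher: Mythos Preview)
Your overall strategy---approximate $z_n$ by a lattice function valued in the wells, bound its jump perimeter, and invoke compactness for Caccioppoli partitions---matches the paper's. The gap is precisely the one you flag at the end, and the mechanism you propose in Step 2 does not close it. The Young-inequality bound
\[
2\lambda_n^{d-1}\sum_i \sqrt{g_n(z_n^i)}\,|z_n^{i+e_\ell}-z_n^i|
\]
gives nothing at a face where $\tilde z_n$ changes cylinder while both endpoints satisfy $|z_n^i|_A\approx 1$: then $g_n(z_n^i)\approx 0$ and the summand is small no matter which cylinders the two points belong to. Your appeal to the interpolant ``crossing a barrier through $B_\eta$'' is a continuum picture that the discrete functional does not see: $g_n$ is evaluated only at lattice points, so the affine segment between $z_n^i\in\mathfrak L_j^n$ and $z_n^{i+e_\ell}\in\mathfrak L_m^n$ may pass through $\{g_n=+\infty\}$ at zero cost. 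Since two wells $q_j^n,q_m^n$ can be arbitrarily close as $k_n\to\infty$, no uniform $\sigma>0$ comes from the potential term, and the ``Riemannian length'' viewpoint collapses.

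The paper resolves this by decoupling the two obstructions and using the gradient term \emph{alone} for cylinder changes. First, a scalar Modica--Mortola argument on $v_n:=|z_n|_A$, combined with the coarea formula, produces a sublevel set $\{\tilde v_n<t_n\}$ with bounded perimeter and vanishing measure; this controls the region where $|z_n|_A$ is small and hence the sign-flip transitions within a cylinder. Second, one introduces the large-jump index set $\mathcal I_n:=\{i:\max_\ell|z_n^{i\pm e_\ell}-z_n^i|\geq r_n/4\}$ and bounds its cardinality directly from the Dirichlet part of $\mathcal F_n$:
\[
C\ \geq\ \varepsilon_n\lambda_n^{d-2}\sum_{i\in\mathcal J_n}\sum_\ell|z_n^{i+e_\ell}-z_n^i|^2\ \geq\ \varepsilon_n\lambda_n^{d-2}(\#\mathcal J_n)\,\frac{r_n^2}{16},
\]
so that the associated cube-union $\mathcal Q_n$ has $\mathcal H^{d-1}(\partial\mathcal Q_n)\leq C\lambda_n/(\varepsilon_n r_n^2)$, which is bounded precisely by assumption (i). Finally, assumption (ii) ensures that outside a fixed small ball the distinct cylinders $\mathfrak L_j^n$ are at mutual distance $\gtrsim r_n$; hence on each connected component of $\Omega\setminus(\mathcal Q_n\cup\{\tilde v_n<t_n\})$ consecutive values of $z_n$ differ by less than $r_n/4$ and therefore remain in a single cylinder. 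The approximant $\hat z_n$ is then defined componentwise, and its perimeter is controlled by $\mathcal H^{d-1}(\partial\mathcal Q_n)+\mathcal H^{d-1}(\partial\{\tilde v_n<t_n\})$. The point you are missing is that the ``barrier'' between distinct wells is not a Modica--Mortola profile at all, but a direct Dirichlet penalty on jumps of size $\gtrsim r_n$, and assumption (i) is exactly the ingredient that converts that penalty into a uniform perimeter cost.
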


	Thanks to Proposition \ref{prop: cpt MM}, under the additional requirement that $\lim_n \frac{\lambda_n}{\varepsilon_n r_n^2} \to 0$, we can prove the following lower bound for the $\Gamma$-limit. We define the function $h \colon \overline{Q} \times \overline{Q} \to \R^+$ as
	\begin{equation}\label{G lim func}
		h(q_1,q_2):=\frac{4}{3}(|q_1|+|q_2|).
	\end{equation}

	\begin{thm}\label{G liminf}
		Let $z_n \in \mathcal{U}_n^d(\Omega;\R^N)$ be a sequence such that $z_n \to z$ in $L^1(\Omega;\R^N)$. Then, if $\frac{\lambda_n}{\varepsilon_n r_n^2} \to 0$,
		\begin{equation}\label{G liminf ex}
			\liminf_{n \to +\infty} \mathcal{F}_n(z_n) \geq \int_{J_z} h(z^-,z^+) \, d \mathcal{H}^{d-1}.
		\end{equation}
	\end{thm}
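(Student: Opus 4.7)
The plan is to adapt the classical Modica--Mortola lower-bound strategy to this discrete, vector-valued, codomain-constrained setting, combining the pointwise Young inequality, a discrete-to-continuum passage via piecewise-affine interpolation, and a BV chain-rule argument with suitable Lipschitz potentials. The central geometric input is the identification of the geodesic distance in $(\mathfrak{L}^n, 2\sqrt{g_n})$ with the limit cost $h$ from~\eqref{G lim func}.

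First, I apply the vectorial Young inequality $\frac{a^2}{\varepsilon_n} + \varepsilon_n b^2 \geq 2|ab|$ pointwise on each lattice cell with $a=\sqrt{g_n(z_n^i)}$ and $b=|\nabla^d z_n^i|$, where $|\nabla^d z_n^i|^2 = \sum_\ell |z_n^{i+e_\ell}-z_n^i|^2/\lambda_n^2$, to obtain
\begin{equation*}
\mathcal{F}_n(z_n) \geq 2\sum_{i\in R_n(\Omega)} \lambda_n^d \sqrt{g_n(z_n^i)}\,|\nabla^d z_n^i|.
\end{equation*}
The energy bound combined with the scaling $\lambda_n/(\varepsilon_n r_n^2)\to 0$ forces $|z_n^{i+e_\ell}-z_n^i|=o(r_n)$, which by assumption~(ii) implies that adjacent nodes either lie in a common cylinder $\mathfrak{L}_j^n$ or both lie in a shrinking neighborhood of the origin. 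I can then pass to the piecewise-affine interpolant $\tilde z_n$ on the Kuhn triangulation $\mathcal{T}_n$ and rewrite the right-hand side, up to a vanishing error, as $2\int_\Omega \sqrt{g_n(\tilde z_n)}\,|\nabla \tilde z_n|\,dx$.

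Next I invoke the BV chain-rule argument. For each pair of values $(q_-,q_+)\in\overline{Q}\times\overline{Q}$ arising as traces of $z$ at a jump, I construct a Lipschitz function $\Phi_n\colon\R^N\to\R$ satisfying $|\nabla\Phi_n(y)|\leq 2\sqrt{g_n(y)}$ for $y\in\mathfrak{L}^n$ and matching, in the Kuratowski limit, the geodesic distance in $(\mathfrak{L}^n,2\sqrt{g_n})$ from a base point near $q_-^n$. A direct computation on a single cylinder, parametrizing $\xi=tq$ with $|q|_A=1$ and $t\in[0,1]$, gives $g_n(tq)=(t^2-1)^2$ and Euclidean arc-length $|q|\,dt$, hence $d_n(q,0)=\int_0^1 2(1-t^2)|q|\,dt=\frac{4}{3}|q|$. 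Assumption~(ii) together with $r_n\to 0$ forces any path in $\mathfrak{L}^n$ between wells on distinct cylinders to pass arbitrarily close to the origin, yielding $d_n(q_-^n,q_+^n)\to h(q_-,q_+)$. The chain rule $|\nabla(\Phi_n\circ\tilde z_n)|\leq 2\sqrt{g_n(\tilde z_n)}|\nabla\tilde z_n|$ and BV lower semicontinuity then give
\begin{equation*}
\liminf_n \mathcal{F}_n(z_n) \geq \liminf_n |D(\Phi_n\circ\tilde z_n)|(\Omega) \geq \int_{J_z} |\Phi(q_+)-\Phi(q_-)|\,d\mathcal{H}^{d-1},
\end{equation*}
and a localization argument around each jump combined with a supremum over admissible $\Phi_n$ produces the integrand $h(z^-,z^+)$.

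The main technical obstacle is the construction and convergence of the $\Phi_n$'s: one must extend them outside $\mathfrak{L}^n$ preserving both global Lipschitz continuity and the gradient bound $|\nabla\Phi_n|\leq 2\sqrt{g_n}$ on $\mathfrak{L}^n$, verify the Kuratowski-limit convergence $\Phi_n\circ\tilde z_n\to\Phi\circ z$ in $L^1$, and carry out a localization/gluing argument summing the contributions of individual jumps without double-counting. An alternative closer in spirit to the discrete framework is to proceed by coordinate slicing and Fubini--Fatou, reducing to a 1D lower bound handled by the same Young inequality and identifying $h$ directly from the 1D geodesic computation; the BV chain-rule route has the advantage of producing the isotropic cost directly.
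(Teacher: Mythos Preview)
Your primary route via the BV chain rule has a genuine gap at the very first reduction step. The claim that the energy bound together with $\lambda_n/(\varepsilon_n r_n^2)\to 0$ forces $|z_n^{i+e_\ell}-z_n^i|=o(r_n)$ for \emph{every} $i$ is only correct when $d=1$: there the gradient term gives $\sum_i |z_n^{i+1}-z_n^i|^2\le C\lambda_n/\varepsilon_n=o(r_n^2)$. For $d\ge 2$ the same computation yields only $\sum_i \lambda_n^{d-2}|z_n^{i+e_\ell}-z_n^i|^2\le C/\varepsilon_n$, which places no pointwise smallness on individual increments; the set $\mathcal I_n$ of ``bad'' edges where $|z_n^{i+e_\ell}-z_n^i|\ge r_n/4$ satisfies only $\#\mathcal I_n\le C\lambda_n^{2-d}/(\varepsilon_n r_n^2)$, i.e.\ its associated cube set $\mathcal Q_n$ has $\mathcal H^{d-1}(\partial\mathcal Q_n)\to 0$ but is generically nonempty. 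On bad edges the endpoints can lie in different cylinders $\mathfrak L_j^n$, so the affine interpolant $\tilde z_n$ leaves $\mathfrak L^n$ along the connecting segment, $g_n(\tilde z_n)=+\infty$ there, and the rewriting $2\int_\Omega\sqrt{g_n(\tilde z_n)}|\nabla\tilde z_n|$ as well as the chain rule $|\nabla(\Phi_n\circ\tilde z_n)|\le 2\sqrt{g_n(\tilde z_n)}|\nabla\tilde z_n|$ break down. No global Lipschitz extension of $\Phi_n$ repairs this: the codomain constraint is precisely what makes the transition cost positive (without it the zero set $\{|\cdot|_A=1\}$ is connected and $h\equiv 0$), so you cannot replace $g_n$ by its finite part $(|\cdot|_A^2-1)^2$ and still read off $h$ from a chain-rule potential.

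The paper's proof sidesteps exactly this obstruction. It first passes to the interpolant and drops the constraint, keeping only $(|\tilde z_n|_A^2-1)^2$, which is harmless for the liminf but loses the information that transitions must go through the origin. It then recovers that information by slicing in \emph{all} directions $\xi\in\Sf^{d-1}$ (not just coordinate directions, which would only produce the anisotropic factor $|\langle\nu_z,e_\ell\rangle|$): since $\mathcal H^{d-1}(\partial\mathcal Q_n)\to 0$, $\mathcal H^{d-1}$-a.e.\ slice misses $\mathcal Q_n$, and on such a slice one is back in the one-dimensional situation where $\#\mathcal I_n=0$, the interpolant genuinely stays in a single cylinder between consecutive near-zero points, and the direct computation $\int_\eta^{|q|-\eta}(\tau^2/|q|^2-1)\,d\tau$ gives the $\tfrac{4}{3}|q^\pm|$ cost. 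Your slicing ``alternative'' is therefore not a stylistic variant but the actual mechanism that makes the argument go through; the key missing ingredient in your write-up is the role of $\mathcal Q_n$ in filtering out the bad slices.
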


	\begin{rem}
		Notice that the domain of definition of the function $h$ is a subset of $K$, that is the $1$-level set of the norm $|\cdot |_A$. Moreover, indicating with $\hat{q}$ the unit vector $q/|q|$, the transition energy $h$ can also be rewritten as
		$$
		h(q_1,q_2)=\frac{4}{3} \left( \frac{1}{|\hat{q}_1|_A}+\frac{1}{|\hat{q}_2|_A} \right).
		$$
	\end{rem}

	\begin{rem}
		The condition $\lim_n \frac{\lambda_n}{\varepsilon_n r_n^2} \to 0$ ensures that the transitions between two different $\mathfrak{L}^n_j$'s can be detected in the limit by the norm of $z_n$ being close to zero.
	\end{rem}

	Finally, an upper bound matching with the lower bound holds. This entails a $\Gamma$-convergence result when $\varepsilon_n \to 0$ for the energies defined in \eqref{01}. Notice that the assumptions on $\lambda_n,\varepsilon_n$ and $r_n$ are weaker for the upper bound to hold.
	
	\begin{thm}\label{limsup}
		Let $z \in BV_{\textnormal{pc}}(\Omega;\overline{Q})$. Then, there exists a sequence $z_n \in \mathcal{U}_n^d(\Omega;\R^N)$ such that $z_n \to z$ in $L^1(\Omega;\R^N)$ and
		$$
		\limsup_{n \to +\infty} \mathcal{F}_n(z_n) \leq \int_{J_z} h(z^-,z^+) \, d \mathcal{H}^{d-1},
		$$
		where $h$ is the function in \eqref{G lim func}.
	\end{thm}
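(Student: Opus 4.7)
The plan is to adapt the classical Modica--Mortola recovery construction to this setting, exploiting the key geometric fact that all of the wells $\textnormal{span}(q_j^n)\cap K$ pass through the origin, so that any low-energy transition between two distinct cylinders $\mathfrak{L}_j^n$ can (and essentially must) route through $0$. I first reduce, by a standard density argument in $BV_{\textnormal{pc}}(\Omega;\overline{Q})$ followed by a diagonal procedure, to the model case in which $z$ takes only two values $q_-,q_+\in\overline{Q}$ separated by a flat hyperplane $\Pi$ with unit normal $\nu$. The passage to a general polyhedral partition is then obtained by gluing local recovery sequences on each facet, with a negligible contribution from the $(d-2)$-dimensional skeleton.

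For this two-phase configuration, I use the Kuratowski convergence $Q_n\to\overline{Q}$ to pick $q_\pm^n\in Q_n$ with $q_\pm^n\to q_\pm$, and build a one-dimensional profile $\psi_n\colon\R\to\R^N$ contained in $\textnormal{span}(q_-^n)\cup\textnormal{span}(q_+^n)\subset\mathfrak{L}^n$, connecting $q_-^n$ to $q_+^n$ through the origin and satisfying the equipartition identity $|\psi_n'|^2=g_n(\psi_n)=(|\psi_n|_A^2-1)^2$ on each half-segment (parametrizing $\xi=tq_\pm^n$ with $t\in[0,1]$, so $|\xi|_A=t$, this amounts to solving an autonomous scalar ODE with $t$ going monotonically from $1$ down to $0$). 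The recovery sequence on the lattice is then defined by sampling
\[ z_n^i := \psi_n\bigl((\lambda_n i\cdot\nu)/\varepsilon_n\bigr), \]
with $\psi_n$ truncated to the constants $q_\pm^n$ for $|s|$ larger than a slowly diverging $L_n$. Convergence $z_n\to z$ in $L^1$ is immediate since the transition layer has width $\sim L_n\varepsilon_n\to 0$.

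The energy of the recovery sequence is estimated by converting the two discrete sums into Riemann approximations of integrals in the direction $\nu$; after the change of variable $s=(x\cdot\nu)/\varepsilon_n$, the leading contribution is
\[ \mathcal{H}^{d-1}(\Pi\cap\Omega)\int_{\R}\bigl[g_n(\psi_n(s))+|\psi_n'(s)|^2\bigr]\,ds = 2\mathcal{H}^{d-1}(\Pi\cap\Omega)\int_{\psi_n(\R)}\sqrt{g_n}\,d|\psi_n|. \]
Evaluating the last line integral along each half-segment $[0,q_\pm^n]$, where the arclength element is $|q_\pm^n|\,dt$ and $g_n=(t^2-1)^2$, yields $\tfrac{2}{3}|q_\pm^n|$ per half, totalling $\tfrac{4}{3}(|q_-^n|+|q_+^n|)\to h(q_-,q_+)$. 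The transverse-lattice gradient terms (along $e_\ell$ with $\ell$ orthogonal to $\nu$) vanish identically since $z_n^i$ depends on $i$ only through $i\cdot\nu$, while the discretization error along $\nu$ is of order $\lambda_n/\varepsilon_n$ and vanishes under a mild sub-scaling.

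The main technical point I anticipate is the careful bookkeeping of the cutoff layer near $s=\pm L_n$, where the profile must join the constants $q_\pm^n$ without violating the $\mathfrak{L}^n$-constraint or generating spurious bulk energy; assumption (ii) on the cylinder intersections is used to guarantee that for large $n$ the transition truly lives in the union of only two wells (no third cylinder intrudes along the chosen segments), so that shrinking the effective transition window produces only asymptotically vanishing corrections. A final diagonal argument over $L_n$ and over the polyhedral approximations then yields a recovery sequence for an arbitrary $z\in BV_{\textnormal{pc}}(\Omega;\overline{Q})$.
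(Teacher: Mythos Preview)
Your approach is essentially the same as the paper's: both build a one-dimensional Modica--Mortola profile along the segments $[0,q_\pm^n]\subset\mathfrak{L}^n$ (so the constraint $g_n<+\infty$ is automatic), sample it on the lattice, and pick $q_\pm^n\to q_\pm$ via Kuratowski convergence; the core computation $2\int_0^1(1-t^2)\,|q_\pm^n|\,dt=\tfrac{4}{3}|q_\pm^n|$ is identical.

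Two remarks on the execution. First, your claim that the transverse-lattice gradient terms vanish is false for a general normal $\nu$: if $\nu$ is not a coordinate direction then $z_n^{i+e_\ell}-z_n^i\approx(\lambda_n\nu_\ell/\varepsilon_n)\psi_n'$ for \emph{every} $\ell$, and none of these is zero. What is true (and sufficient) is that summing over $\ell$ reproduces $|\nabla z_n|^2=|\psi_n'|^2/\varepsilon_n^2$ since $|\nu|=1$, so your energy estimate survives with the correct justification. Second, the paper avoids both the facet-by-facet gluing and the kink at the origin by working directly with a regularized distance $\delta_n(x)\approx\dist(x,J_z)$ and setting $v_n(x)=|\hat z_n|\,f_\tau\bigl(\delta_n(x)/(\varepsilon_n|\hat z_n|)\bigr)$, $z_n=v_n\hat z_n/|\hat z_n|$; the profile is arranged so that $v_n\equiv 0$ on a $\sqrt{d}\lambda_n$-neighborhood of $J_z$, which forces $|z_n^{i+e_\ell}-z_n^i|=|v_n^{i+e_\ell}-v_n^i|$ exactly (the lattice never sees the directional jump of $\hat z_n$) and handles the $(d-2)$-skeleton automatically. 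Finally, your appeal to assumption~(ii) is unnecessary for the upper bound: the profile lies in $\textnormal{span}(q_-^n)\cup\textnormal{span}(q_+^n)\subset\mathfrak{L}^n$ by construction, so a third cylinder cannot interfere.
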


	We will use the following compactness result for piecewise constant functions defined on Caccioppoli partitions (see \cite{Ambrosio2000FunctionsOB}).
	
	\begin{thm}\label{thm: caccio cpt}
		Let $\Omega \subset \R^d$ be an open bounded set with Lipschitz boundary. Let $\{u_n\}_n$ be a sequence in $\sbv_{\textnormal{pc}}(\Omega;\R^N)$ such that $\sup_n \left( \Vert u_n \Vert_{L^\infty(\Omega)}+\mathcal{H}^{d-1}(J_{u_n}) \right) <+\infty$. Then, there exits $u \in \sbv_{\textnormal{pc}}(\Omega;\R^N)$ and a subsequence (not relabeled) such that $u_n \to u$ in $L^1(\Omega;\R^N)$.
	\end{thm}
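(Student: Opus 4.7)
The plan is to reduce the statement to a direct application of Ambrosio's general $SBV$ compactness theorem, taking advantage of the fact that piecewise constancy makes the volume part of the gradient trivially bounded. First I would observe that every $u_n\in SBV_{\textnormal{pc}}(\Omega;\R^N)$ has approximate gradient $\nabla u_n\equiv 0$ in $\Omega$ and, by the definition of $SBV_{\textnormal{pc}}$, its distributional derivative is concentrated on the rectifiable jump set $J_{u_n}$. Combined with the hypotheses $\sup_n \|u_n\|_{L^\infty}<+\infty$ and $\sup_n \mathcal H^{d-1}(J_{u_n})<+\infty$, this places $\{u_n\}$ inside the compactness class of Ambrosio's theorem as stated, e.g., in \cite[Theorem 4.8]{Ambrosio2000FunctionsOB}: for any exponent $p>1$ one has $\sup_n \int_\Omega|\nabla u_n|^p\,dx=0$, uniform $L^\infty$ control, and a uniform bound on $\mathcal H^{d-1}(J_{u_n})$.

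Next I would apply that theorem componentwise (or directly in its vector-valued formulation) to extract a subsequence, not relabeled, and a limit $u\in SBV(\Omega;\R^N)$ such that $u_n\to u$ in $L^1(\Omega;\R^N)$, the approximate gradients $\nabla u_n$ converge weakly in $L^p(\Omega;\R^{N\times d})$ to $\nabla u$, and the lower semicontinuity $\mathcal H^{d-1}(J_u)\le \liminf_n \mathcal H^{d-1}(J_{u_n})$ holds. Since each $\nabla u_n$ is identically zero, the weak $L^p$ limit gives $\nabla u=0$ almost everywhere in $\Omega$. Consequently $D^a u=0$ on $\Omega$, while $u\in SBV$ gives $|D^s u|(\Omega\setminus J_u)=0$, so the distributional derivative of $u$ is concentrated on the rectifiable set $J_u$, and the traces $u^\pm$ take only finitely or countably many values at $\mathcal H^{d-1}$-a.e. jump point. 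By the structure theorem linking such functions to Caccioppoli partitions (see \cite[Theorem 4.23]{Ambrosio2000FunctionsOB}), the level sets of $u$ form a Caccioppoli partition of $\Omega$, hence $u\in SBV_{\textnormal{pc}}(\Omega;\R^N)$, which is the desired conclusion.

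The step that requires the most care is ensuring that the weak convergence of approximate gradients together with $\nabla u_n\equiv 0$ really forces $\nabla u=0$ almost everywhere, so that the limit does not develop an absolutely continuous part out of a sequence with vanishing gradients. The standard way around this is to apply Ambrosio's theorem for a fixed $p>1$ (say $p=2$), for which weak $L^p$ convergence to $\nabla u$ is part of the conclusion and the identification $\nabla u=0$ is immediate. The other delicate ingredient, namely verifying that $u$ takes values in an at most countable set so that the level sets form a Caccioppoli partition, follows from the fact that $Du$ is purely jump and the $L^\infty$ bound confines the traces in a compact set; combined with the closedness of $SBV_{\textnormal{pc}}$ under $L^1$-convergence with uniformly bounded jump measure, this yields $u\in SBV_{\textnormal{pc}}(\Omega;\R^N)$ with no further work.
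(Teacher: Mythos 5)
Your argument is correct, but it reaches the result along a heavier route than the one the paper has in mind. The paper states this theorem without proof and simply cites \cite{Ambrosio2000FunctionsOB}; given the label and the surrounding text (``compactness result for piecewise constant functions defined on Caccioppoli partitions''), the intended reference is the compactness theorem for Caccioppoli partitions (and for functions piecewise constant on them) proved directly in that book, which is obtained from compactness for sets of finite perimeter and a diagonal argument on the countably many components of the partition. You instead first observe that the $L^\infty$ bound and the uniform control on $\mathcal H^{d-1}(J_{u_n})$ give a uniform $BV$ bound (the total variation being at most $2\|u_n\|_\infty\,\mathcal H^{d-1}(J_{u_n})$), extract a subsequence converging in $L^1$ and weakly* in $BV$, apply Ambrosio's general $SBV$ closure/compactness theorem with $\varphi(t)=t^p$, $p>1$, and $\theta\equiv 1$ (both admissible: $\varphi(t)/t\to\infty$ at infinity, $\theta(t)/t\to\infty$ at $0$) to land in $SBV$ with $\nabla u=0$ as the weak limit of the identically zero gradients, and then invoke the structure theorem characterizing $BV$ functions with $Du=D^j u$ as exactly those that are piecewise constant on a Caccioppoli partition. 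Both routes are standard and rigorous; yours costs more machinery (Ambrosio's compactness theorem plus the structure theorem) but makes the ``no bulk part can be created in the limit'' step completely automatic, while the direct Caccioppoli-partition approach is more elementary and constructs the limiting partition by hand. The one place where your write-up is a bit informal is the sentence about the traces taking ``only finitely or countably many values''; this is not an independent observation but precisely the content of the structure theorem you then cite, so it would be cleaner to state it as a consequence of that theorem rather than as a standalone claim.
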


	\begin{proof}[Proof of Proposition \ref{prop: cpt MM}]
		Observe that by \eqref{MM bound} we have $z_n \in \mathfrak{L}^n$ almost everywhere on $\Omega$, moreover, the functions $g_n$ are independent of $n$ on the set $\mathfrak{L}^n$, continuous and bounded below by $c|\xi|^4$ for some $c>0$ when $|\xi|\gg 1$. Hence, using standard arguments, we infer that the sequence $\{z_n\}$ is bounded in $L^1(\Omega;\R^N)$ and equiintegrable on $\Omega$. Consider the sequence 
		\begin{equation*}
			T(z_n):=
			\begin{cases*}
				\displaystyle  2\frac{z_n}{|z_n|_A} & if $|z_n|_A \geq 2$, \\
				z_n & otherwise.
			\end{cases*}
		\end{equation*}
		By \eqref{A norm} and the definition of $g_n$ we have $\mathcal{F}_n(T(z_n)) \leq \mathcal{F}_n(z_n)$. We claim that if $T(z_n)$ converges in $L^1$ to $z \in BV(\Omega;\overline{Q})$, then $z_n$ converges to $z$ in $L^1$ on $\Omega$ as well. Indeed, since $\sup_n \mathcal{F}_n(z_n)<+\infty$ by \eqref{MM bound}, we have that $|z_n|_A \to 1$ pointwise on $\Omega$ as $n \to +\infty$. Hence, $|\{|z_n|_A \geq 2\}| \to 0$ as $n \to +\infty$. Recalling that $|T(z_n)|_A \leq 2$ by construction and the fact that the sequence $z_n$ is equiintegrable on $\Omega$, we conclude that $z_n \to z$ in $L^1$ as well. Thus, since it is enough to prove compactness for equibounded sequences on $\Omega$, we may assume that $ \sup_n \Vert z_n \Vert_{L^\infty(\Omega;\R^N)} \leq C$. Let us define the piecewise constant scalar function $v_n \in \mathcal{U}_n^d(\Omega)$ as $v_n:=|z_n|_A$. By \eqref{gn}, \eqref{MM bound}, triangular inequality and recalling \eqref{A norm}, we have
		\begin{equation}\label{02}
			\sup_n \left[ \frac{1}{\varepsilon_n} \sum_{i \in R_n(\Omega)} \lambda_n^d ((v_n^i)^2-1)^2+\varepsilon_n \sum_{i \in R_n(\Omega)} \lambda_n^d \left( \sum_{\ell=1}^d \left| \frac{v_n^i-v_n^{i+ e_\ell}}{\lambda_n} \right|^2 \right)\right] \leq (1+c_2^2)C.
		\end{equation}
		Consider piecewise affine function $\tilde{v}_n \in H^1(\Omega)$ associated to $v_n$ according to Definition \ref{affine interpolant}. Given $\sigma>0$ small, using the equi-boundedness of $z_n$ on $\Omega$ and \eqref{02} we estimate
		\begin{align*}
			\int_\Omega ((\tilde{v}_n)^2-1)^2 \, dx & \leq (1+3\sigma) \sum_{i \in R_n(\Omega)} \lambda_n^d ((v_n^i)^2-1)^2 + \left(1+\frac{1}{\sigma}\right) \int_\Omega \left( (v_n)^2-(\tilde{v}_n)^2 \right)^2 \, dx \\
			& \leq (1+3\sigma) \sum_{i \in R_n(\Omega)} \lambda_n^d ((v_n^i)^2-1)^2 + C \left(1+\frac{1}{\sigma}\right)  \int_\Omega \left( v_n-\tilde{v}_n \right)^2 \, dx  \\
			& \leq (1+3\sigma) \sum_{i \in R_n(\Omega)} \lambda_n^d ((v_n^i)^2-1)^2 + C \left(1+\frac{1}{\sigma}\right) \sum_{i \in R_n(\Omega)} \lambda_n^d \left( \sum_{\ell=1}^d \left| v_n^i-v_n^{i+ e_\ell} \right|^2 \right) \\
			&  \leq (1+3\sigma) C \varepsilon_n + C(1+c_2^2) \left(1+\frac{1}{\sigma}\right) \frac{\lambda_n^2}{\varepsilon_n}.
		\end{align*}
		Hence, fixing $\sigma>0$, we have that
		\begin{align}
			\begin{split}\label{03}
			\sup_n & \left( \frac{1}{\varepsilon_n} \int_\Omega (\tilde{v}_n^2-1)^2 \, dx + \varepsilon_n \int_\Omega |\nabla \tilde{v}_n|^2 \, dx \right) \\
			& = \sup_n \left[ \frac{1}{\varepsilon_n} \int_\Omega (\tilde{v}_n^2-1)^2 \, dx +\varepsilon_n \sum_{i \in R_n(\Omega)} \lambda_n^d \left( \sum_{\ell=1}^d \left| \frac{v_n^i-v_n^{i+ e_\ell}}{\lambda_n} \right|^2 \right)\right] \\
			& \leq C+\frac{\lambda_n^2}{\varepsilon_n^2}C \leq C(1+r_n^2) \leq C.
			\end{split}
		\end{align}
		Using the coarea formula we infer
		$$
		C \geq \int_\Omega 2(\tilde{v}_n^2-1)|\nabla \tilde{v}_n| \, dx \geq \frac{3}{2}\int_{1/4}^{1/2}  \mathcal{H}^{d-1}(\partial^* \{ \tilde{v}_n <s \} \setminus \partial \Omega) \, ds.
		$$
		Therefore, by the mean value theorem, for every $n \geq 1$ there exists $t_n \in (1/4,1/2)$ such that 
		\begin{equation}\label{04}
			\mathcal{H}^{d-1}(\partial^* \{ \tilde{v}_n <t_n \} \setminus \partial \Omega) \leq C.
		\end{equation}
		Moreover,
		\begin{equation}\label{05}
			|\{ \tilde{v}_n <t_n \} \cap \Omega| \leq \frac{16}{9} \int_\Omega (\tilde{v}_n^2-1)^2 \, dx \leq \varepsilon_n C \to 0.
		\end{equation}
		We define the set of indexes where the difference between two adjacent points is bigger than $r_n/4$, namely
		\begin{equation*}
			\mathcal{I}_n:=\left\{ i \in R_n(\Omega) \colon \ \max_{\ell=1,\dots,d} \left\{\left|z_n^i-z_n^{i\pm e_\ell}\right|\right\} \geq \frac{r_n}{4} \right\}.
		\end{equation*}
		Notice that if we define
		$$
		\mathcal{J}_n :=\left\{ i \in R_n(\Omega) \colon \ \max_{\ell=1,\dots,d} \left\{\left|z_n^i-z_n^{i+  e_\ell}\right|\right\} \geq \frac{r_n}{4} \right\},
		$$
		then we have that $\# \mathcal{J}_n \geq \frac{\# \mathcal{I}_n}{\# ([-1,1]^d \cap \Z^d)}$.
		Hence, in view of \eqref{MM bound} we can estimate
		\begin{align*}
			C & \geq \varepsilon_n \sum_{i \in R_n(\Omega)} \lambda_n^d \left( \sum_{\ell=1}^d \left| \frac{z_n^i-z_n^{i+ e_\ell}}{\lambda_n} \right|^2 \right) \geq \sum_{i \in \mathcal{J}_n} \lambda_n^{d-2} \varepsilon_n \left( \sum_{\ell=1}^d \left| z_n^i-z_n^{i+ e_\ell} \right|^2 \right) \geq \lambda_n^{d-2} \varepsilon_n \left(\# \mathcal{J}_n \right) \frac{r_n^2}{16}.
		\end{align*}
		This gives that 
		\begin{equation}\label{06}
			\# \mathcal{I}_n \leq \frac{C}{\lambda_n^{d-2}\varepsilon_n r_n^2}.
		\end{equation}
		Correspondingly, we consider the set $\mathcal{Q}_n:=\bigcup_{i \in \mathcal{I}_n} \left( \lambda_n i+[0,\lambda_n)^n \right)$ and observe that by \eqref{06} 
		\begin{equation}\label{07}
			\mathcal{H}^{d-1}(\partial^* \mathcal{Q}_n \setminus \partial \Omega) \leq 2d \lambda_n^{d-1} \frac{C}{\lambda_n^{d-2}\varepsilon_n r_n^2}=C\frac{\lambda_n}{\varepsilon_n r_n^2} \leq C, \qquad |\mathcal{Q}_n| \leq \lambda_n^d \frac{C}{\lambda_n^{d-2}\varepsilon_n r_n^2} \leq C\lambda_n \frac{\lambda_n}{\varepsilon_n r_n^2} \leq \lambda_n C.
		\end{equation}
		We define the set $K_n:=(\mathcal{Q}_n \cup \{ \tilde{v}_n < t_n\}) \cap \Omega$. Let $E \subset \Omega$ be a connected component of $\Omega \setminus K_n$. We claim that there exists $j \in \{1,\dots,k_n\}$ such that for every $i \in \Z^d_n(\Omega)$ with $\lambda_n i \in \lambda_n \Z^d \cap E$ we have $z^i_n \in \mathfrak{L}^n_j$ when $n \geq 1$ is large enough. Indeed, since $\tilde{v}_n \geq t_n \geq 1/4$ on $E$, by definition of the piecewise affine interpolation and since $E \cap \mathcal{Q}_n=\emptyset$, we also have on $E$ that 
		$$
		|z_n| \geq \frac{1}{c_2} |z_n|_A = \frac{1}{c_2}v_n \geq \frac{1}{4c_2}-\frac{r_n}{4} \geq \frac{1}{8c_2}.
		$$
		Moreover, in view of assumption (ii), if we take $n \geq 1$ large enough we have that $\mathfrak{L}^n_j \cap \mathfrak{L}_m^n \subset B_{1/{16c_2}}$ for every $j,m=1,\dots,k_n$ with $j \neq m$. Hence, by definition of $\mathfrak{L}_j^n$, we deduce that 
		$$
		\dist\left(\mathfrak{L}_j^n \setminus B_{1/8c_2},\mathfrak{L}^n_m \setminus B_{1/8c_2}\right) \geq r_n, \qquad \mbox{for every $j,m = 1,\dots,k_n$ with $j \neq m$}.
		$$
		Fix $i \in \Z^d_n(\Omega)$ such that $\lambda_n i \in \lambda_n \Z^d \cap E$. By \eqref{gn} and \eqref{MM bound} there exists a fixed index $j \in \{1,\dots,k_n\}$ such that $z^i_n \in \mathfrak{L}^n_j$. In turn, recalling that $i \notin \mathcal{I}_n$, we have that $z_n^{i \pm  e_\ell} \in \mathfrak{L}_j^n$ for every $\ell=1,\dots,d$ as well. Therefore, since $E \cap \mathcal{Q}_n =\emptyset$, we can repeat the same argument for every $i \in \Z^d_n(\Omega)$ such that $\lambda_n i \in \lambda_n \Z^d \cap E$ and we conclude the claim. 
		
		Let $\{E^n_\alpha\}_\alpha \cup K_n$ be the partition of $\Omega$ induced by $\Omega \setminus K_n$, where $E_\alpha^n$ is a connected component of $\Omega \setminus K_n$. For every $\alpha$ there exists $j=j(\alpha) \in \{1,\dots,k_n\}$ such that $z_n \in \mathfrak{L}^n_j$ and $|z_n| \geq 1/4$ restricted to $E_\alpha^n$. Let us define the piecewise constant function $\hat{z}_n$ on $\Omega$ as
		\begin{equation}\label{07,5}
			\hat{z}_n:=
			\begin{cases*}
				q_n^{j(\alpha)} & on $E_\alpha^n$; \\
				0 & on $K_n$.
			\end{cases*}
		\end{equation}
		We estimate
		\begin{equation}\label{08}
			\Vert z_n - \hat{z}_n \Vert_{L^1(\Omega)} \leq \Vert z_n \Vert_{L^1(K_n)}+ \bigg \Vert z_n-\frac{z_n}{|z_n|_A} \bigg \Vert_{L^1(\Omega \setminus K_n)}+|\Omega| \bigg \Vert \frac{z_n}{|z_n|_A}-\hat{z}_n \bigg \Vert_{L^\infty(\Omega \setminus K_n)}.
		\end{equation}
		Using \eqref{05}, \eqref{07} and the fact that the sequence $z_n$ is equibounded, we infer that the first right-hand-side term of \eqref{08} vanishes in the limit. The second term also goes to zero since the sequence $z_n$ is equibounded and $|z_n|_A \to 1$ in measure on $\Omega$. Finally, the third term vanishes by definition of $\mathfrak{L}^n_j$.
		Recalling \eqref{04} and \eqref{07}, we infer that $\hat{z}_n \in BV_{\textnormal{pc}}(\Omega;\R^N)$ is such that $\Vert \hat{z}_n \Vert_{BV(\Omega;\R^N)} \leq C$ for every $n \geq 1$ large enough. Therefore, using Theorem \ref{thm: caccio cpt} we infer that there exists $z \in BV_{\textnormal{pc}}(\Omega;K)$ such that u.t.s. $\hat{z}_n \to z$ in $L^1(\Omega;\R^N)$. In particular, by definition of the sequence $\hat{z}_n$ we must have $z(x) \in \overline{Q}$ for almost every $x \in \Omega$. Thus, we conclude that $z_n \to z$ in $L^1(\Omega;\R^N)$ and $z \in BV_{\textnormal{pc}}(\Omega;\overline{Q})$.
	\end{proof}

	We now prove the liminf inequality using a slicing argument.

	\begin{proof}[Proof of Theorem \ref{G liminf}]
		Without loss of generality we can assume that $\sup_n \mathcal{F}_n(z_n)\leq C$ and, arguing as in the proof of Proposition \ref{prop: cpt MM}, that the sequence $z_n$ is equibounded in $\Omega$. 
		
		First, we show that if $z_n$ converges in $L^1(\Omega;\R^N)$ to $z$, then also the affine interpolant $\tilde{z}_n$ (recall Definition \ref{affine interpolant}) converges to $z$ in $L^1(\Omega;\R^N)$ and 
		\begin{equation}\label{oooooo}
		\liminf_{n \to +\infty}  \mathcal{F}_n(z_n) \geq  \liminf_{n \to +\infty} \frac{1}{\varepsilon_n} \int_\Omega \left(|\tilde{z}_n|_A^2-1\right)^2 \, dx + \varepsilon_n \int_\Omega |\nabla \tilde{z}_n|^2 \, dx.
		\end{equation}
		Recalling that $\#R_n(\Omega) \leq C\lambda_n^{-d}$, we have 
		\begin{align*}
		\int_\Omega |\tilde{z}_n-z_n| \, dx  & \leq \sum_{i \in R_n(\Omega)} \lambda_n^d \left( \sum_{\ell=1}^d \left| z_n^i-z_n^{i+ e_\ell} \right| \right) \leq \sqrt{ C\lambda_n^{-d} \sum_{i \in R_n(\Omega)} \lambda_n^{2d} \left( \sqrt{d} \sum_{\ell=1}^d \left| z_n^i-z_n^{i+ e_\ell} \right|^2 \right)} \\
		& \leq C \lambda_n \sqrt{\sum_{i \in R_n(\Omega)} \lambda_n^d \left( \sum_{\ell=1}^d \left| \frac{z_n^i-z_n^{i+ e_\ell}}{\lambda_n} \right|^2 \right)} \leq C \frac{\lambda_n}{\sqrt{\varepsilon_n}} \leq C r_n.
		\end{align*}
		Hence, 
		$
		\Vert \tilde{z}_n-z \Vert_{L^1} \leq \Vert \tilde{z}_n -z_n \Vert_{L^1} +\Vert {z}_n-z \Vert_{L^1},
		$
		that is, $\tilde{z}_n$ converges in $L^1(\Omega;\R^N)$ to $z$ as $n \to +\infty$.
		Arguing similarly to the proof of Proposition \ref{prop: cpt MM}, for $\sigma>0$ we estimate
		\begin{align*}
			\int_\Omega (|\tilde{z}_n|_A^2-1)^2 \, dx & 
			\leq (1+3\sigma) \sum_{i \in R_n(\Omega)} \lambda_n^d (|z_n^i|_A^2-1)^2 + c_2^2C \left(1+\frac{1}{\sigma}\right) \sum_{i \in R_n(\Omega)} \lambda_n^d \left( \sum_{\ell=1}^d \left| z_n^i-z_n^{i+ e_\ell} \right|^2 \right) \\
			&  \leq (1+3\sigma) \sum_{i \in R_n(\Omega)} \lambda_n^d (|z_n^i|_A^2-1)^2  + c_2^2C \left(1+\frac{1}{\sigma}\right) \frac{\lambda_n^2}{\varepsilon_n}.
		\end{align*}
		Thus,
		\begin{align*}
				& \frac{1}{\varepsilon_n} \int_\Omega (|\tilde{z}_n|_A^2-1)^2 \, dx + \varepsilon_n \int_\Omega |\nabla \tilde{z}_n|^2 \, dx \\
				& =  \frac{1}{\varepsilon_n} \int_\Omega (|\tilde{z}_n|_A^2-1)^2 \, dx +\varepsilon_n \sum_{i \in R_n(\Omega)} \lambda_n^d \left( \sum_{\ell=1}^d \left| \frac{z_n^i-z_n^{i+ e_\ell}}{\lambda_n} \right|^2 \right)\\
				& \leq (1+3\sigma) \mathcal{F}_n(z_n)+\frac{\lambda_n^2}{\varepsilon_n^2}C\left(1+\frac{1}{\sigma}\right).
		\end{align*}
		Since $\sigma>0$ is arbitrary, we obtain the desired inequality \eqref{oooooo}. Hence, it is enough to prove that 
		$$
		\liminf_{n \to +\infty} \left( \frac{1}{\varepsilon_n} \int_\Omega \left(|\tilde{z}_n|_A^2-1\right)^2 \, dx + \varepsilon_n \int_\Omega |\nabla \tilde{z}_n|^2 \, dx \right) \geq \int_{J_z} h(z^-,z^+) \, d \mathcal{H}^{d-1}
		$$
		for $\tilde{z}_n \to z$ in $L^1(\Omega,\R^N)$.\\
		
		We now prove the statement for $d=1$ and then we proceed by a slicing argument.\\
		
		\noindent \textbf{One dimensional estimate:} We assume that the set $\Omega$ is a bounded open interval $I$ of $\R$. By Proposition \ref{prop: cpt MM}, we have that there exists $z \in BV(I,\overline{Q})$ such that u.t.s. $z_n \to z$ and $\tilde{z}_n \to z$ in $L^1(I,\R^N)$. Hence, the set $J_z$ is a discrete finite set contained in $I$. Observe that in \eqref{06}, since $d=1$ and $\frac{\lambda_n}{\varepsilon_n r_n^2} \to 0$ by assumption, we have that for $n$ large enough $\#\mathcal{I}_n=0$. Thus, for every $t \in J_z$ we have that there exists a sequence $\sigma_n \to t$ such that $\tilde{z}_n(\sigma_n) \to 0$. Fix $t_0 \in J_z$ and assume that $z=q^- \in \overline{Q}$ and $z=q^+ \in \overline{Q}$ in a left and a right neighborhood of $t_0$, respectively. There exist three sequences $\{t_n\},\{s_n\},\{\sigma_n\} \subset I$ such that $s_n<t_n$, $\sigma_n \in (s_n,t_n)$ and
		\begin{equation}\label{09}
			\tilde{z}_n(s_n) \to q^-, \qquad \tilde{z}_n(t_n) \to q^+, \qquad \tilde{z}_n(\sigma_n) \to 0.
		\end{equation}
		Fix $\eta>0$ small and let $\overline{s}_n:=\min \{ t \in [s_n,\sigma_n] \colon \ |{z}_n(t)| \leq  \eta\}$. Hence, since $\#\mathcal{I}_n=0$, there exists $j \in \{1,\dots,k_n\}$ such that for every $t \in [s_n,\overline{s}_n]$, $z_n(t) \in \mathfrak{L}^n_j$. By definition of $\tilde{z}_n$ we thus have $\tilde{z}_n(t) \in \mathfrak{L}^n_j$ for every $t \in [s_n+\lambda_n,\overline{s}_n-\lambda_n]$, that is, $\tilde{z}_n(t)=\mu_n(t)q_j^n+\delta_n(t)$ with $\mu_n(t) \in \R$ and $\delta_n(t) \in \R^N$, $|\delta_n(t)| \leq r_n$, $|q^n_j|_A=1$ and $q^n_j \to q^-$. Moreover, using again that $\#\mathcal{I}_n=0$ for $n$ large enough and the fact that $|q^-| \gg \eta$, we infer that the set $[s_n+\lambda_n,\overline{s}_n-\lambda_n]$ is non empty. Therefore, setting $\tilde{w}_n:=|\tilde{z}_n|$, and using the fact that $z_n$ is uniformly bounded on $I$ we estimate
		\begin{align*}
			\frac{1}{\varepsilon_n} & \int_{s_n+\lambda_n}^{\overline{s}_n-\lambda_n} \left(|\tilde{z}_n|_A^2-1\right)^2 \, dt + \varepsilon_n \int_{s_n+\lambda_n}^{\overline{s}_n-\lambda_n} |\nabla \tilde{z}_n|^2 \, dt \geq 2\int_{s_n+\lambda_n}^{\overline{s}_n-\lambda_n} \left(|\tilde{z}_n|_A^2-1\right)|\nabla \tilde{w}_n| \, dt \\
			& = 2\int_{s_n+\lambda_n}^{\overline{s}_n-\lambda_n} \left(|\mu_n(t)q_j^n+\delta_n(t)|_A^2-1\right)|\nabla \tilde{w}_n| \, dt \\
			& \geq 2\int_{s_n+\lambda_n}^{\overline{s}_n-\lambda_n} \left(\frac{|\tilde{z}_n-\delta_n|^2}{|q_j^n|^2}-1\right)|\nabla \tilde{w}_n| \, dt - 2C \Vert \tilde{z}_n \Vert_{L^\infty} \int_{s_n+\lambda_n}^{\overline{s}_n-\lambda_n} |\delta_n||\nabla \tilde{w}_n| \, dt \\
			& \geq 2\int_{s_n+\lambda_n}^{\overline{s}_n-\lambda_n} \left(\frac{|\tilde{w}_n|^2}{|q_j^n|^2}-1\right)|\nabla \tilde{w}_n| \, dt
			-4C \Vert \tilde{z}_n \Vert_{L^\infty} r_n \int_{s_n+\lambda_n}^{\overline{s}_n-\lambda_n} |\nabla \tilde{w}_n| \, dt \\
			& \geq 2\int_{\eta}^{|q^-|-\eta} \left( \frac{\tau^2}{|q_j^n|}-1 \right) \, d\tau-4C\Vert \tilde{z}_n \Vert_{L^\infty} r_n |q^-| \geq \frac{4}{3}|q^-|-C\eta-Cr_n-C|q_j^n-q^-|.
		\end{align*}
		Repeating the same argument for $q^+$ and then for every $t \in J_z$, we get the desired inequality
		$$
		\liminf_{n \to +\infty} \mathcal{F}_n(z_n) \geq \sum_{t \in J_z} \frac{4}{3} \left( |z^-(t)|+|z^+(t)| \right).
		$$
		\noindent \textbf{$d$-dimensional estimate:} We first fix some notations. Given $\xi \in \Sf^{N-1}$ we define $\Pi_\xi:=\{x \in \R^N \colon \ \langle z,\xi \rangle =0 \}$. For every $y \in \Pi_\xi$ we set
		$$
		\Omega_{\xi,y}:=\{ t \in \R \colon y+t\xi \in \Omega \} \qquad z_{\xi,y}(t):=z(y+t\xi).
		$$
		For every open set $U \subseteq \Omega$ we localize the functionals defining
		$$
		\mathcal{F}_n(\tilde{z},U):=\frac{1}{\varepsilon_n} \int_U (|\tilde{z}|_A^2-1)^2 \, dx + \varepsilon_n \int_U |\nabla \tilde{z}|^2 \, dx.
		$$
		Given $\xi \in \Sf^{N-1}$ and $y \in \Pi_\xi$ we also set
		$$
		\mathcal{F}_n^{\xi,y}(\tilde{z},I):=\frac{1}{\varepsilon_n} \int_I (|\tilde{z}_{\xi,y}|_A^2-1)^2 \, dx + \varepsilon_n \int_I |\tilde{z}_{\xi,y}'|^2 \, dx
		$$
		and 
		$$
		\mathcal{F}_n^\xi(\tilde{z},U):=\int_{\Pi_\xi} \mathcal{F}_n^{\xi,y}(\tilde{z},U_{\xi,y}) \, d \mathcal{H}^{d-1}(y):=\frac{1}{\varepsilon_n} \int_U (|\tilde{z}|_A^2-1)^2 \, dx + \varepsilon_n \int_U |\langle \nabla \tilde{z},\xi \rangle|^2 \, dx.
		$$
		Thus, $\mathcal{F}_n^\xi(\tilde{z},U) \leq \mathcal{F}_n(\tilde{z},U)$. Let now $z_n \in \mathcal{U}_n^d(\Omega;\R^N)$ such that $\liminf_n \mathcal{F}_n(z_n) \leq C$. Thanks to Proposition \ref{prop: cpt MM} we have that there exists $z \in BV(\Omega;\mathcal{Q})$ such that u.t.s. $z_n \to z$ and $\tilde{z}_n \to z$ in $L^1(\Omega;\R^n)$.
		Recalling the definition of $\mathcal{Q}_n$ in the proof of Proposition \ref{prop: cpt MM}, notice that for every $\xi \in \Sf^{N-1}$ we have
		$$
		\mathcal{H}^{d-1}\left(\{ y \in \Pi_\xi \colon \ \Omega_{\xi,y} \cap \mathcal{Q}_n \neq \emptyset \} \right) \leq \mathcal{H}^{d-1}(\partial \mathcal{Q}_n).
		$$ 
		Moreover, by \eqref{06} and \eqref{07} we have that
		$$
		|\mathcal{Q}_n| \leq C\frac{\lambda_n^2}{\varepsilon_n r_n^2}, \qquad \mathcal{H}^{d-1}(\partial \mathcal{Q}_n) \leq C \frac{\lambda_n}{\varepsilon_n r_n^2},
		$$
		hence by assumption $\mathcal{H}^{d-1}(\mathcal{Q}_n) \to 0$. Fix $\xi \in \Sf^{N-1}$ and observe that for $\mathcal{H}^{d-1}$-a.e. $y \in \{ y \in \Pi_\xi \colon \ \Omega_{\xi,y} \cap \mathcal{Q}_n = \emptyset \}$, since $z_n \to z$ on $\Omega_{\xi,y}$, by the one dimensional estimate we have
		$$
		\mathcal{F}^{\xi,y}({z},U_{\xi,y}):=\liminf_{n \to +\infty} \mathcal{F}_n^{\xi,y}(\tilde{z}_n,U_{\xi,y}) \geq \sum_{t \in J_z \cap U_{\xi,y}} \frac{4}{3} \left( |{z}^+|+|{z}^-| \right).
		$$
		Using the fact that $\mathcal{H}^{d-1}(\partial \mathcal{Q}_n) \to 0$ and Fatou lemma, we estimate for every $\xi \in \Sf^{N-1}$
		\begin{align*}
			\liminf_{n \to +\infty} \mathcal{F}_n(\tilde{z}_n,U) & \geq \liminf_{n \to +\infty} \mathcal{F}_n^\xi(\tilde{z}_n,U) = \liminf_{n \to +\infty} \int_{\Pi_\xi} \mathcal{F}_n^{\xi,y}(\tilde{z}_n,U_{\xi,y}) \, d \mathcal{H}^{d-1}(y) \\
			& \geq \int_{\Pi_\xi} \mathcal{F}^{\xi,y}({z},U_{\xi,y}) \, d \mathcal{H}^{d-1}(y) \geq \int_{\Pi_\xi} \sum_{t \in J_z \cap U_{\xi,y}} \frac{4}{3} \left( |{z}^+|+|{z}^-| \right) \, d \mathcal{H}^{d-1}(y) \\
			& = \int_{J_z \cap U} \frac{4}{3} \left( |z^-(x)|+|z^+(x)| \right) |\langle \nu_z(x),\xi \rangle| \, \mathcal{H}^{d-1}(x).
		\end{align*}
		Therefore, setting $\mathcal{F}^\xi({z},U):= \int_{\Pi_\xi} \mathcal{F}^{\xi,y}({z},U_{\xi,y}) \, d\mathcal{H}^{d-1}(y)$, for every $\xi \in \Sf^{N-1}$ and every open set $U \subseteq \Omega$ we have
		$$
		\liminf_{n \to +\infty} \mathcal{F}_n(\tilde{z}_n,U) \geq \mathcal{F}^\xi({z},U) \geq \int_{J_z \cap U} \frac{4}{3} \left( |z^-(x)|+|z^+(x)| \right) |\langle \nu_z(x),\xi \rangle| \, \mathcal{H}^{d-1}(x).
		$$
		Since $\mathcal{F}_n$ are local functionals, by standard arguments (see \cite[Lemma 15.2]{BraidesGamma}) we deduce that, given a dense countable subset $\{\xi_i\}$ of $\Sf^{N-1}$, for every open set $U \subseteq \Omega$ 
		$$
		\liminf_{n \to +\infty} \mathcal{F}_n(\tilde{z}_n,U) \geq \int_{J_z \cap U} \frac{4}{3} \left( |z^-(x)|+|z^+(x)| \right) \sup_i \{|\langle \nu_z(x),\xi_i \rangle|\} \, \mathcal{H}^{d-1}(x).
		$$
		Since $\sup_i \{|\langle \nu_z(x),\xi_i \rangle|\}=1$ for every $x \in J_z$, we conclude.

	\end{proof}

	The following result is contained in \cite[Theorem 2.1 and Corollary 2.4]{BraCoGarr}.
	
	\begin{thm}\label{thm: density of poly}
		Let $\mathcal{Z} \subset \R^N$ be a finite set and $\Omega \subset \R^d$ a bounded open set with Lipschitz boundary. Given $z \in SBV(\Omega;\mathcal{Z})$ there exists a sequence $z_n \in SBV(\Omega;\mathcal{Z})$ such that $J_{z_n}$ is composed by the intersection of $\Omega$ with a finite union of ($d-1$)-dimensional simplexes, $z_n \to z$ in $L^1(\Omega;\mathcal{Z})$ and $\mathcal{H}^{d-1}(J_{z_n}) \to \mathcal{H}^{d-1}(J_z)$.
	\end{thm}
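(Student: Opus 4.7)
The plan is to reduce the statement to a joint polyhedral approximation of the Caccioppoli partition induced by $z$. Since $\mathcal{Z}=\{\zeta_1,\dots,\zeta_m\}$ is finite and $z\in SBV(\Omega;\mathcal{Z})$, one has $z=\sum_{j=1}^m \zeta_j\chi_{E_j}$ where $\{E_j\}$ is a Caccioppoli partition of $\Omega$ with $\sum_j P(E_j;\Omega)=2\mathcal{H}^{d-1}(J_z)<+\infty$. The goal becomes: approximate the partition $\{E_j\}$ by a partition $\{E_j^n\}$ whose common interfaces are contained in a finite union of $(d-1)$-simplexes, with $|E_j^n\triangle E_j|\to 0$ for every $j$ and $\sum_j P(E_j^n;\Omega)\to\sum_j P(E_j;\Omega)$; this immediately yields the claim by taking $z_n:=\sum_j \zeta_j \chi_{E_j^n}$ and using the identity $2\mathcal{H}^{d-1}(J_{z_n})=\sum_j P(E_j^n;\Omega)$.

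To produce such an approximation I would proceed in two steps. First, I would fix $n$ and build a regular triangulation $\mathcal{T}_n$ of $\Omega$ of mesh size $h_n\to 0$ and define $z_n$ on each $d$-simplex $T\in\mathcal{T}_n$ by a \emph{majority rule}: $z_n\equiv \zeta_{j(T)}$ with $j(T)\in\arg\max_j |T\cap E_j|$. By construction $z_n$ takes values in $\mathcal{Z}$ and its jump set $J_{z_n}$ is contained in the $(d-1)$-skeleton of $\mathcal{T}_n$, hence in a finite union of simplexes. The $L^1$ convergence $z_n\to z$ follows from a standard estimate: for $\mathcal{H}^{d-1}$-a.e.\ interior point of each $E_j$ the density of $E_j$ is one, so all but $o(h_n^{-d})$ simplexes are essentially contained in a single $E_j$, while the bad simplexes have total volume bounded by $C h_n\sum_j P(E_j;\Omega)=O(h_n)$.

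The upper perimeter estimate $\limsup_n \sum_j P(E_j^n;\Omega)\le \sum_j P(E_j;\Omega)$, where $E_j^n:=\{z_n=\zeta_j\}$, is the core of the argument. Here I would rely on the fact, proven by Braides--Conti--Garroni via a careful bisection/refinement scheme of the triangulation adapted to the rectifiable set $J_z$, that the triangulation $\mathcal{T}_n$ can be chosen so that its $(d-1)$-skeleton approximates $J_z$ in a suitable flat sense, with only a fraction $o(1)$ of the face-area not lying near $J_z$. Combined with the matching lower bound which is free (since $P$ is lower semicontinuous under $L^1$ convergence of characteristic functions and the partition structure is preserved in the limit), one obtains $\sum_j P(E_j^n;\Omega)\to \sum_j P(E_j;\Omega)$, and hence $\mathcal{H}^{d-1}(J_{z_n})\to\mathcal{H}^{d-1}(J_z)$.

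The main obstacle is precisely this last step: constructing a triangulation $\mathcal{T}_n$ whose $(d-1)$-skeleton is almost tangential to $J_z$ so that the majority rule does not create spurious interfaces with positive asymptotic $\mathcal{H}^{d-1}$-measure. Naive uniform triangulations produce a defect of order $\sum_j P(E_j;\Omega)$ in the limit, not $o(1)$. The remedy is an adaptive refinement driven by a covering of $J_z$ by small balls on which the set is nearly flat (provided by the rectifiability of $J_z$), inside which the triangulation is rotated and refined to align its faces with the approximate tangent planes to $J_z$; outside these balls one uses the volume smallness of the remainder set. A diagonal argument in $n$ then concludes the proof.
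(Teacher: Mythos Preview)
The paper does not prove this theorem: it is quoted verbatim as a known result, with the attribution ``The following result is contained in \cite[Theorem 2.1 and Corollary 2.4]{BraCoGarr}'' and no further argument. So there is no in-paper proof to compare against.

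Your sketch is in the right spirit and in fact tracks the strategy of the cited reference: reduce to a polyhedral approximation of the Caccioppoli partition induced by $z$, with the key difficulty being the upper perimeter estimate. You correctly identify that a naive majority rule on a uniform triangulation fails to give $\limsup_n \sum_j P(E_j^n;\Omega)\le \sum_j P(E_j;\Omega)$, and that one needs an adaptive construction aligned with the approximate tangent planes of $J_z$. However, your description of the remedy (``rotate and refine the triangulation inside small balls where $J_z$ is nearly flat'') is where all the actual work lies, and your write-up does not supply it: one must control how the locally rotated meshes are glued together without creating extra interface area at the seams, and one must handle the residual set where $J_z$ is not well-approximated by its tangent planes. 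As written, this remains a proof outline that defers the hard step to the very reference the paper cites; it is not an independent argument.
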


	We now present an approximation result which will be useful in the proof of the limsup inequality.
	
	\begin{lem}\label{lem: density of poly}
		Let $z \in BV_{\textnormal{pc}}(\Omega;\overline{Q})$ where $\overline{Q} \subset K$ with $K \subset \R^N$ compact. Then, there exists $z_n \in BV_{\textnormal{pc}}(\Omega;\overline{Q})$ such that $J_{z_n}$ is composed by the intersection of $\Omega$ with a finite union of ($d-1$)-dimensional simplexes, $z_n \to z$ in $L^1(\Omega;\overline{Q})$ and $\mathcal{H}^{d-1}(J_{z_n}) \to \mathcal{H}^{d-1}(J_z)$.
	\end{lem}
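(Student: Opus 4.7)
The strategy is a two-step approximation: first reduce $z$ to a function taking values in a finite subset of $\overline{Q}$ via nearest-point projection onto a $(1/m)$-net, and then invoke Theorem \ref{thm: density of poly} (which handles finite target sets) to polyhedralize the jump set of each such approximation. Since $\overline{Q}$ is compact, for every $m\in\N$ there is a finite $(1/m)$-net $\mathcal{Z}_m=\{p_1^m,\dots,p_{N_m}^m\}\subset\overline{Q}$. Let $\pi_m\colon\overline{Q}\to\mathcal{Z}_m$ be a Borel-measurable nearest-point projection and set $z^m:=\pi_m\circ z$. If $\{E_\alpha\}$ is the Caccioppoli partition on which $z$ takes values $\{q_\alpha\}\subset\overline{Q}$, the sets $\widetilde{E}^m_\beta:=\bigcup\{E_\alpha\colon\pi_m(q_\alpha)=p^m_\beta\}$ form a (coarser) Caccioppoli partition on which $z^m\equiv p^m_\beta$, whence $z^m\in BV_{\textnormal{pc}}(\Omega;\overline{Q})$. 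The bound $\|z^m-z\|_{L^\infty}\le 1/m$ gives $z^m\to z$ in $L^1(\Omega;\overline{Q})$, while the inclusion $J_{z^m}\subseteq J_z$ (inherited from $z^m$ being a function of $z$) yields the upper bound $\mathcal{H}^{d-1}(J_{z^m})\le\mathcal{H}^{d-1}(J_z)$.

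The key technical step is the matching lower bound $\liminf_m\mathcal{H}^{d-1}(J_{z^m})\ge\mathcal{H}^{d-1}(J_z)$. To obtain it, decompose
\[
J_z=\bigsqcup_{\{q_i,q_j\},\,q_i\neq q_j}J_z^{q_i,q_j}
\]
according to the at most countably many unordered pairs of trace values, with $\sum \mathcal{H}^{d-1}(J_z^{q_i,q_j})=\mathcal{H}^{d-1}(J_z)<+\infty$. For every fixed pair with $q_i\neq q_j$ and every $m$ with $1/m<|q_i-q_j|/2$, the projection satisfies $\pi_m(q_i)\neq\pi_m(q_j)$, so that $J_z^{q_i,q_j}\subseteq J_{z^m}$ for all such $m$. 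Dominated convergence applied to this countable sum then yields $\mathcal{H}^{d-1}(J_{z^m})\to\mathcal{H}^{d-1}(J_z)$ as $m\to+\infty$.

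For the second step, for each fixed $m$ I apply Theorem \ref{thm: density of poly} to $z^m\in SBV(\Omega;\mathcal{Z}_m)$, obtaining a sequence $z^{m,k}\in SBV(\Omega;\mathcal{Z}_m)\subset BV_{\textnormal{pc}}(\Omega;\overline{Q})$ whose jump set is a finite union of $(d-1)$-dimensional simplexes intersected with $\Omega$, with $z^{m,k}\to z^m$ in $L^1$ and $\mathcal{H}^{d-1}(J_{z^{m,k}})\to\mathcal{H}^{d-1}(J_{z^m})$ as $k\to+\infty$. A standard diagonal extraction then selects $k(m)$ so that $w_m:=z^{m,k(m)}$ satisfies $w_m\to z$ in $L^1(\Omega;\overline{Q})$ and $\mathcal{H}^{d-1}(J_{w_m})\to\mathcal{H}^{d-1}(J_z)$, and $w_m$ is the required approximating sequence.

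The only delicate point is the convergence of jump measures in the first step: the projection $\pi_m$ may a priori collapse substantial portions of $J_z$ by identifying nearby trace values. This is controlled precisely because a $BV_{\textnormal{pc}}$-function has at most countably many essential values and each pair of distinct values is separated by a positive distance, so collapse happens only for finitely many $m$ per pair; combined with the summability of $\mathcal{H}^{d-1}(J_z^{q_i,q_j})$, this reduces the argument to a routine application of dominated convergence.
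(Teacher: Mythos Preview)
Your proof is correct and follows the same overall two-step scheme as the paper (reduce to finitely many values, then invoke Theorem~\ref{thm: density of poly}), but your implementation of the first step is genuinely different. The paper truncates the Caccioppoli partition $\{E_\alpha\}$: it keeps the first $\alpha_0$ pieces and replaces $z$ on the union of the remaining pieces by a single fixed value $q\in\overline{Q}$, choosing $\alpha_0$ so that both $\sum_{\alpha\ge\alpha_0}|E_\alpha|$ and $\sum_{\alpha\ge\alpha_0}\mathcal{H}^{d-1}(\partial^*E_\alpha\setminus\partial\Omega)$ are small. This yields directly $\mathcal{H}^{d-1}(J_z\,\Delta\,J_{z_\varepsilon})\le\varepsilon$ without any further analysis. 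Your route via nearest-point projection onto a $(1/m)$-net is slightly more elegant in that it gives an $L^\infty$ (hence $L^1$) approximation for free, but you then have to work harder to control the jump-set measure: the decomposition of $J_z$ by trace pairs and the dominated-convergence argument are exactly what is needed to rule out that $\pi_m$ collapses too much of $J_z$. Both arguments are short and self-contained; the paper's is marginally more direct, while yours would generalise more readily if one wanted convergence in stronger topologies on the values.
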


	\begin{proof}
		If $z$ takes only a finite number of values in $\overline{Q}$ then we can use Theorem \ref{thm: density of poly} to conclude. Otherwise, let us show that for every $\varepsilon>0$ small it is always possible to find $z_\varepsilon \in BV_{\textnormal{pc}}(\Omega;\overline{Q})$ such that $z_\varepsilon$ takes only a finite number of values on $\Omega$, $\Vert z_\varepsilon-z \Vert_{L^1(\Omega)} \leq C \varepsilon$ and $\mathcal{H}^{d-1}(J_z \Delta J_{z_\varepsilon}) \leq \varepsilon$.
		
		Let $\{E_\alpha\}_\alpha$ be the Caccioppoli partition of $\Omega$ induced by $J_z$. We have $\sum_\alpha \mathcal{H}^{d-1}(\partial^* E_\alpha \setminus \partial \Omega)<+\infty$ and $\sum_\alpha |E_\alpha|=|\Omega|$. Let $\alpha_0 \geq 1$ be such that 
		$$
		\sum_{\alpha \geq \alpha_0} \mathcal{H}^{d-1}(\partial^* E_\alpha \setminus \partial \Omega) \leq \varepsilon, \qquad \sum_{\alpha \geq \alpha_0} |E_\alpha| \leq \varepsilon.
		$$
		Let $q \in \overline{Q}$, we define $z_\varepsilon=z$ on $\cup_{\alpha < \alpha_0} E_\alpha$ and $z_\varepsilon=q$ on $\cup_{\alpha \geq \alpha_0}E_\alpha$. By definition of $z_\varepsilon$ we estimate
		$$
		\mathcal{H}^{d-1}(J_z \Delta J_{z_\varepsilon}) \leq \sum_{\alpha \geq \alpha_0} \mathcal{H}^{d-1}(\partial^* E_\alpha \setminus \partial \Omega) \leq \varepsilon,\qquad \Vert z-z_\varepsilon\Vert_{L^1(\Omega)} \leq (|z|+|z_\varepsilon|) \bigcup_{\alpha \geq \alpha_0}|E_\alpha | \leq C \varepsilon.
		$$
		Thus we conclude.
	\end{proof}
We are ready to prove the limsup inequality.
\begin{proof}[Proof of Theorem \ref{limsup}]
		In view of Lemma \ref{lem: density of poly} we can assume that $J_z$ is the intersection of $\Omega$ with a finite union of ($d-1$)-dimensional simplexes. Hence, $J_z$ is relatively closed in $\Omega$ and it coincides with its Minkowski content. Fix $\tau>0$ small. For every $n \geq 1$, we define $\delta_n \in C(\Omega)$ the regularized function such that $\delta_n \in C^\infty(\Omega \setminus J_z)$, and, for every $x \in \Omega$,
		\begin{equation}\label{013}
			\frac{(\dist(x,J_z)-3\sqrt{d}\lambda_n)^+}{2} \leq \delta_n(x) \leq 2 (\dist(x,J_z)-2\sqrt{d}\lambda_n)^+, \ \ \  \Vert \nabla \delta_n \Vert_{L^\infty(\Omega)} \leq 1+\tau, \ \ \ |\nabla^2 \delta_n(x)| \leq \frac{C}{\dist(x,J_z)}.
		\end{equation}
		Let $f_\tau \in C^2([0,+\infty))$ be the almost optimal profile function such that $f_\tau(0)=0$, $f_\tau(t)=1$ for every $t \geq 1/\tau$ and
		\begin{equation}\label{014,5}
		\int_0^{+\infty} (f_\tau(t)^2-1)^2 \, dt+\int_0^{+\infty} (f_\tau'(t))^2 \, dt \leq (1+\tau)\frac{4}{3}.
		\end{equation}
		Since $z \in \overline{Q}$ in $\Omega$, there exists a sequence $\hat{z}_n \in BV(\Omega,Q_n)$ such that $J_{\hat{z}_n}=J_z$ for every $n$ and $\hat{z}_n \to z$ uniformly on $\Omega$.
		We define the sequence of functions $v_n \in C^2(\Omega)$ as 
		$$
		v_n(x):=|\hat{z}_n|f_\tau\left(\frac{\delta_n(x)}{\varepsilon_n|\hat{z}_n|}\right).
		$$
		For every $x \in \Omega \setminus J_{z}$ using \eqref{013} we estimate
		\begin{equation}\label{013.5}
			|\nabla v_n(x)| \leq \left| \frac{1}{\varepsilon_n} (1+\tau) f_\tau '\left( \frac{\delta_n(x)}{\varepsilon_n|\hat{z}_n|} \right) \right|, \ \ \ |\nabla^2 v_n(x)| \leq \frac{C}{\varepsilon_n^2}\left| f_\tau ''\left( \frac{\delta_n(x)}{\varepsilon_n|\hat{z}_n|} \right) \right| +  \frac{C}{\varepsilon_n} \frac{1}{\dist(x,J_z)}\left| f_\tau '\left( \frac{\delta_n(x)}{\varepsilon_n|\hat{z}_n|} \right) \right|.
		\end{equation}
		We claim that
		\begin{equation}\label{014}
			\limsup_{n \to +\infty} \left( \frac{1}{\varepsilon_n} \int_\Omega \left(\frac{v_n^2}{|\hat{z}_n|^2}-1\right)^2 \, dx + \varepsilon_n \int_\Omega |\nabla v_n|^2 \, dx \right) \leq (1+\tau)^2 \limsup_{n \to +\infty} \int_{J_z} \frac{4}{3} \left( |\hat{z}_n^-(x)|+|\hat{z}_n^+(x)| \right) \, d \mathcal{H}^{d-1}(x).
		\end{equation}
		We are going to prove the claim sketching the arguments as they are quite standard.
		First notice that the intersection of the interfaces in $J_z$ is a set of finite $\mathcal{H}^{d-2}$ measure. Thus, in an arbitrarily small neighborhood of this set the energy is arbitrarily small. Moreover, far from $J_z$ we have that taking $n$ large enough the energy associated to $v_n$ is arbitrarily small. Let now $T$ be an interface of $J_z$ (without loss of generality we can assume that $T$ lies on $\{x_d=0\}$). Take $\gamma>0$ such that $(T \times (-\gamma,\gamma)) \cap J_z=T$, we can always find such $\gamma$ up replacing $T$ with $T' \Subset T$ compactly contained in the topology of $\R^{d-1}$. Assume that $\hat{z}_n=q_1^n$ on $T \times (-\gamma,0)$ and $\hat{z}_n=q_2^n$ on $T \times (0,\gamma)$. We estimate for $n$ large enough
		\begin{align*}
			\frac{1}{\varepsilon_n} \int_{T \times (-\gamma,0)} & \left(\frac{v_n^2}{|\hat{z}_n|^2}-1\right)^2 \, dx + \varepsilon_n \int_{T \times (-\gamma,0)} |\nabla v_n|^2 \, dx \\ 
			& = \int_T \int_{-\gamma}^0 \frac{1}{\varepsilon_n}\left( f_\tau\left(\frac{\delta_n(x',t)}{\varepsilon_n|q_1^n|}\right)^2-1 \right)^2 + \frac{1}{\varepsilon_n} f_\tau'\left(\frac{\delta_n(x',t)}{\varepsilon_n|q_1^n|}\right)^2 |\nabla \delta_n(x',t)|^2 \, dt \, d\mathcal{H}^{d-1}(x') \\
			& \leq (1+\tau)|q_1^n| \left( \int_T \int_0^{ \frac{\delta_n(x',-\gamma)}{\varepsilon_n|q_2^n|}} \left( f_\tau(s)^2-1 \right)^2+(f'_\tau(s))^2  \, ds \, d\mathcal{H}^{d-1}(x') \right) \\
			& \leq (1+\tau)^2 |\hat{z}_n| \frac{4}{3} \mathcal{H}^{d-1}(T).
		\end{align*}
		Repeating the same argument for $T \times (0,\gamma)$ and then for every interface in $J_z$ we conclude the claim. We now estimate the difference between the energy in \eqref{014} and the discretized one. Let us define the discretized function $v_n \in \mathcal{U}_n^d(\Omega;\R^N)$ as $v_n^i:=v_n(\lambda_n i)$. For every $\sigma>0$ small, using the fact that $v_n$ is equibounded and vanishes near $J_z$, we infer
		\begin{align}
			\begin{split}\label{015}
			\frac{1}{\varepsilon_n} \sum_{i \in R_n(\Omega)} & \lambda_n^d \left(\frac{(v_n^i)^2}{|\hat{z}_n(\lambda_n i)|^2}-1\right)^2 \\
			& \leq \frac{1+3\sigma}{\varepsilon_n} \int_\Omega \left(\frac{v_n^2}{|\hat{z}_n|^2}-1\right)^2 \, dx+\frac{C}{\varepsilon_n} \left(1+\frac{1}{\sigma} \right) \sum_{i \in R_n(\Omega)} \int_{\lambda_n i+[0,\lambda_n)^d} \left| v_n-v_n(\lambda_n i) \right|^2 \, dx.
			\end{split}
		\end{align}
		Let $K_{\lambda}:=J_z+B_\lambda$ for $\lambda>0$. We now estimate the reminder term in \eqref{015}. By \eqref{013} and \eqref{013.5}
		\begin{align*}
			\begin{split}
				\frac{1}{\varepsilon_n} \sum_{i \in R_n(\Omega)} \int_{\lambda_n i+[0,\lambda_n)^d} \left| v_n(x)-v_n(\lambda_n i) \right|^2 \, dx & \leq \frac{1}{\varepsilon_n} \sum_{i \in R_n(\Omega)} \int_{(\lambda_n i+[0,\lambda_n)^d) \cap K_{\varepsilon_n/\tau}} \frac{\Vert f_\tau' \Vert_{L^\infty}^2}{\varepsilon_n^2|\hat{z}_n|^2} \Vert \nabla \delta_n \Vert_{L^\infty}^2 \lambda_n^2 \, dx \\
				& \leq C \frac{1}{\varepsilon_n} \frac{\lambda_n^2}{\varepsilon_n^2} |K_{\varepsilon_n/\tau}| \leq \frac{C \mathcal{H}^{d-1}(J_z)}{\tau} \frac{\lambda_n^2}{\varepsilon_n^2}.
			\end{split}
		\end{align*}
		Hence, since $\lambda_n/\varepsilon_n \to 0$ as $n \to +\infty$,
		\begin{equation}\label{016}
			\limsup_{n \to +\infty} \frac{1}{\varepsilon_n} \sum_{i \in R_n(\Omega)} \lambda_n^d \left(\frac{(v_n^i)^2}{|\hat{z}_n(\lambda_n i)|^2}-1\right)^2 \leq \limsup_{n \to +\infty} \frac{1+3\sigma}{\varepsilon_n} \int_\Omega \left(\frac{v_n^2}{|\hat{z}_n|^2}-1\right)^2 \, dx. 
		\end{equation}
		Similarly as before, for every $\sigma>0$ small we have
		\begin{align}\label{017}
			\begin{split}
			\varepsilon_n \sum_{i \in R_n(\Omega)} \lambda_n^d & \left( \sum_{\ell=1}^d \left| \frac{v_n^i-v^{i+ e_\ell}_n}{\lambda_n} \right|^2 \right) = \varepsilon_n \sum_{i \in R_n(\Omega)} \lambda_n^d \left( \sum_{\ell=1}^d \left| \frac{1}{\lambda_n} \int_0^{\lambda_n} \frac{\partial v_n}{\partial x_\ell}(\lambda_n i+s e_\ell) \, ds \right|^2 \right) \\
			& \leq \varepsilon_n (1+3\sigma) \int_\Omega |\nabla v_n|^2 \, dx \\
			& \ \ + \varepsilon_n \left(1+\frac{1}{\sigma} \right) \sum_{i \in R_n(\Omega)} \int_{\lambda_n i+[0,\lambda_n)^d} \left( \sum_{\ell=1}^d \left|\frac{\partial v_n}{\partial x_\ell}(x)- \frac{1}{\lambda_n} \int_0^{\lambda_n} \frac{\partial v_n}{\partial x_\ell}(\lambda_n i+s e_\ell) ds \right|^2 \right) \, dx
			\end{split}
		\end{align}
		We now estimate the remainder term in \eqref{017}. Using Jensen inequality, \eqref{013} and \eqref{013.5} we get
		\begin{align*}
			\varepsilon_n \sum_{i \in R_n(\Omega)} & \int_{\lambda_n i+[0,\lambda_n)^d} \left( \sum_{\ell=1}^d \left|\frac{\partial v_n}{\partial x_\ell}(x)- \frac{1}{\lambda_n} \int_0^{\lambda_n} \frac{\partial v_n}{\partial x_\ell}(\lambda_n i+s e_\ell) ds \right|^2 \right) \, dx \\
			& \leq \frac{\varepsilon_n}{\lambda_n} \sum_{i \in R_n(\Omega)} \int_{(\lambda_n i+[0,\lambda_n)^d) \cap K_{\varepsilon_n/\tau} \setminus K_{\sqrt{d}\lambda_n}} \left( \sum_{\ell=1}^d \int_0^{\lambda_n} \left|\frac{\partial v_n}{\partial x_\ell}(x)-  \frac{\partial v_n}{\partial x_\ell}(\lambda_n i+s e_\ell)\right|^2 ds  \right) \, dx \\
			& \leq \varepsilon_n C \sum_{i \in R_n(\Omega)} \int_{(\lambda_n i+[0,\lambda_n)^d) \cap K_{\varepsilon_n/\tau} \setminus K_{\sqrt{d}\lambda_n}} \lambda_n^2 \sup_{\lambda_n i+[0,\lambda_n)^d} |\nabla^2 v_n(x)|^2 \\
			& \leq \varepsilon_n \lambda_n^2 C \left( \frac{C}{\varepsilon_n^4} |K_{\varepsilon_n/\tau}|+\frac{1}{\varepsilon_n^2} \mathcal{H}^{d-1}(J_z)\int_{\sqrt{d}\lambda_n}^{\varepsilon_n/\tau} \frac{C}{t^2} \, dt \right) \\
			& \leq \frac{C}{\tau} \mathcal{H}^{d-1}(J_z) \left( \frac{\lambda_n^2}{\varepsilon_n^2}+ \frac{\lambda_n}{\varepsilon_n} \right).
		\end{align*}
		Therefore, 
		\begin{equation}\label{018}
			\limsup_{n \to +\infty} \ \varepsilon_n \sum_{i \in R_n(\Omega)} \lambda_n^d  \left( \sum_{\ell=1}^d \left| \frac{v_n^i-v^{i+ e_\ell}_n}{\lambda_n} \right|^2 \right) \leq \limsup_{n \to +\infty} (1+3\sigma) \varepsilon_n \int_\Omega |\nabla v_n|^2 \, dx.
		\end{equation}
		Combining \eqref{014}, \eqref{016}, \eqref{018} and the fact that $\hat{z}_n \to z$ uniformly on $\Omega$, we conclude
		\begin{equation}\label{019}
			\limsup_{n \to +\infty} \mathcal{F}_n(v_n) \leq (1+3\sigma)(1+\tau)^2 \int_{J_z} \frac{4}{3} \left( |z^-(x)|+|z^+(x)| \right) \, d \mathcal{H}^{d-1}(x).
		\end{equation}
		Notice that by definition $v_n \to |\hat{z}_n|$ in $L^1(\Omega)$. We are now ready to define the recovery sequence $z_n \in \mathcal{U}_n^d(\Omega;\R^N)$ as $z_n:=v_n \frac{\hat{z}_n}{|\hat{z}_n|}$. Notice that by construction it holds $z_n \to z$ in $L^1(\Omega;\R^N)$. Finally, since $v_n=0$ on $K_{\sqrt{d}\lambda_n}$ and $|z_n|=v_n$ by definition, we infer that for every $n \geq 1$ we have $|z_n^i-z_n^{i + e_\ell}|=|v_n^i-v_n^{i + e_\ell}|$ for every $i \in R_n(\Omega)$ and $\ell = 1,\dots,d$. Thus,
		$$
		\mathcal{F}_n(z_n)=\frac{1}{\varepsilon_n} \sum_{i \in R_n(\Omega)} \lambda_n^d \left(\frac{(v_n^i)^2}{|\hat{z}_n(\lambda_n i)|^2}-1\right)^2+ \varepsilon_n \sum_{i \in R_n(\Omega)} \lambda_n^d \left( \sum_{\ell=1}^d \left| \frac{v_n^i-v^{i+ e_\ell}_n}{\lambda_n} \right|^2 \right).
		$$
		That is,
		$$
		\limsup_{n \to +\infty} \mathcal{F}_n(z_n) \leq (1+3\sigma)(1+\tau)^2 \int_{J_z} \frac{4}{3} \left( |z^-(x)|+|z^+(x)| \right) \, d \mathcal{H}^{d-1}(x).
		$$
		By arbitrariness of $\tau>0$ and $\sigma>0$ we conclude the limsup inequality.
	\end{proof}

	\section{Chirality transition in the regime $1 \ll k_n \ll \delta_n^{-1/4}$}
	
	This section is devoted to the proof of compactness and $\Gamma$-convergence for the chirality model defined in \eqref{Hslkn}, in the case $k_n \delta_n^{1/4} \to 0$. Finally, at the end of this section we prove Theorem \ref{thm: critical case}. We recall now some important facts we are going to use. The first is a compactness result for scaled energies proven in \cite[Proposition 4.3]{CiSol}. It holds for spin variables taking values in $\Sf^1$, the adaptation for $\Sf^2$ is straightforward.
	
	\begin{prop}\label{prop: bounded energy u}
		Let $\mu_n \to 0$ and let $u_n \in C_n(I,\Sf^2)$ be a sequence such that
		\begin{equation}\label{bound u}
			\sup_n H^{sl}_n(u_n) \leq C \lambda_n \mu_n,
		\end{equation}
		for some $C>0$. Then, for all $i$ we have
		$$
		|(1-\delta_n)-(u_n^i,u_n^{i+1})| \leq C \mu_n^{1/2}.
		$$
		In particular, this implies that $(u^i_n,u^{i+1}_n) \to 1$ uniformly.
	\end{prop}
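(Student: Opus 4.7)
The plan is to extract a pointwise $\ell^\infty$--estimate on the discrete second differences from the $\ell^2$--type energy bound, and then to combine three Cauchy--Schwarz inequalities obtained by testing against $u_n^i$, $u_n^{i+1}$, and $u_n^{i+2}$. Set $R^i := u_n^i - 2(1-\delta_n) u_n^{i+1} + u_n^{i+2}$, $c^i := (u_n^i, u_n^{i+1})$, and $p^i := (u_n^i, u_n^{i+2})$. Because each summand $\lambda_n |R^i|^2$ in $H^{sl}_n(u_n)$ is non-negative, the hypothesis $H^{sl}_n(u_n) \leq C \lambda_n \mu_n$ gives $\sum_i |R^i|^2 \leq 2C \mu_n$, and hence the individual pointwise bound $|R^i|^2 \leq 2C \mu_n$ for every admissible index $i$, i.e.\ $|R^i| \leq C_1 \mu_n^{1/2}$.

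Expanding the three scalar products $(R^i, u_n^j)$ for $j = i, i+1, i+2$ using $|u_n^j|=1$ yields the identities
\begin{equation*}
(R^i, u_n^i) = 1 - 2(1-\delta_n) c^i + p^i, \qquad (R^i, u_n^{i+1}) = c^i + c^{i+1} - 2(1-\delta_n), \qquad (R^i, u_n^{i+2}) = 1 - 2(1-\delta_n) c^{i+1} + p^i.
\end{equation*}
Cauchy--Schwarz applied to each, using the step-one bound on $|R^i|$, yields $|c^i + c^{i+1} - 2(1-\delta_n)| \leq C_1 \mu_n^{1/2}$ from the middle identity. Subtracting the first from the third produces $(R^i, u_n^i - u_n^{i+2}) = -2(1-\delta_n)(c^i - c^{i+1})$, which is bounded by $|R^i|\,|u_n^i - u_n^{i+2}| \leq 2 C_1 \mu_n^{1/2}$, so for $n$ large (say $\delta_n \leq 1/2$) we obtain $|c^i - c^{i+1}| \leq 2 C_1 \mu_n^{1/2}$. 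The triangle inequality then gives $2|c^i - (1-\delta_n)| \leq |c^i + c^{i+1} - 2(1-\delta_n)| + |c^i - c^{i+1}| \leq 3 C_1 \mu_n^{1/2}$, which is the claim up to a harmless constant. The ``in particular'' statement follows immediately: since $\delta_n \to 0$, both $1-\delta_n \to 1$ and the uniform deviation bound $C \mu_n^{1/2} \to 0$ force $(u_n^i, u_n^{i+1}) \to 1$ uniformly in $i$.

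The main obstacle -- the reason the ``adaptation from $\Sf^1$'' is not entirely trivial -- is that testing only against the middle spin $u_n^{i+1}$ controls the symmetric combination $c^i + c^{i+1}$, and a naive use of $c^j \leq 1$ to separate the two variables introduces an extra $\delta_n$ error that prevents the clean $\mu_n^{1/2}$ bound. The symmetric pairing with $u_n^i$ and $u_n^{i+2}$ is precisely what kills the antisymmetric part $c^i - c^{i+1}$ and lets one conclude without any side assumption relating $\mu_n$ and $\delta_n$. The only other delicate point is step one, where non-negativity of each summand is essential in passing from the Riemann-sum bound $\sum \lambda_n |R^i|^2 \leq C \lambda_n \mu_n$ to a genuine pointwise estimate on a single $|R^i|^2$.
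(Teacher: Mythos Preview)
Your argument is correct. The paper does not actually supply a proof of this proposition; it merely cites \cite[Proposition 4.3]{CiSol} (written for $\Sf^1$-valued spins) and declares the $\Sf^2$ adaptation ``straightforward''. Your write-up is therefore a genuine, self-contained proof rather than a paraphrase. The original argument in \cite{CiSol} exploits the $\Sf^1$ angular parametrisation $u^j=(\cos\theta^j,\sin\theta^j)$ and works with the scalar increments $\varphi^j=\theta^{j+1}-\theta^j$; your purely vectorial approach via the three pairings $(R^i,u_n^i)$, $(R^i,u_n^{i+1})$, $(R^i,u_n^{i+2})$ bypasses this parametrisation entirely and is precisely the kind of coordinate-free adaptation the paper has in mind. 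Your remark that testing only against $u_n^{i+1}$ leaves a parasitic $\delta_n$ is accurate and is the one place where some thought is needed to keep the bound independent of any relation between $\mu_n$ and $\delta_n$.

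One very minor omission: your chain of inequalities controls $c^i=(u_n^i,u_n^{i+1})$ only for indices $i$ with $i+2\in\Z_n(I)$, i.e.\ $i\in R_n(I)$, whereas the statement asserts the bound ``for all $i$''. The missing endpoint $i=[1/\lambda_n]-1$ is covered by the periodic boundary condition $(u_n^1,u_n^0)=(u_n^{[1/\lambda_n]},u_n^{[1/\lambda_n]-1})$ built into the space $C_n(I,\Sf^2)\subset\overline{\mathcal U}_n(I;\Sf^2)$, which gives $c^{[1/\lambda_n]-1}=c^0$. A one-line remark to this effect would make the proof watertight.
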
 

	The next two results are proven in \cite[Proposition 3.1 and 3.2]{CiRuSol} and give useful bounds at the energy scale $\lambda_n \delta_n^{3/2}$.
	
	\begin{prop}\label{prop: bound on Hsln}
		Let $z_n $ be a sequence in $L^\infty(I,\R^3)$ such that
		\begin{equation*}
			\sup_n \frac{H^{sl}_n(z_n)}{\sqrt{2}\lambda_n \delta_n^{3/2}} \leq C < +\infty,
		\end{equation*}
		and let $u_n \in C_n(I;\Sf^2)$ be such that $z_n=T_n(u_n)$ for all $n$. Then, there exists a sequence of positive real numbers $\gamma_n \to 0$ such that for $n$ sufficiently large the following two bounds hold true:
		\begin{align}
			& \frac{H^{sl}_n(z_n)}{\sqrt{2}\lambda_n \delta_n^{3/2}} \geq \label{Hsln geq} \frac{\sqrt{2\delta_n}}{\lambda_n} \sum_{i \in R_n(I)} \lambda_n \left( \left| \frac{u^{i+1}_n-u^i_n}{\sqrt{2\delta_n}} \right|^2-1 \right)^2+\frac{\lambda_n(1-\gamma_n)}{\sqrt{2\delta_n}} \sum_{i \in R_n(I)} \lambda_n \left| \frac{z^{i+1}_n-z^i_n}{\lambda_n} \right|^2,  \\
			& \frac{H^{sl}_n(z_n)}{\sqrt{2}\lambda_n \delta_n^{3/2}} \leq \label{Hsln leq} \frac{\sqrt{2\delta_n}}{\lambda_n} \sum_{i \in R_n(I)} \lambda_n \left( \left| \frac{u^{i+1}_n-u^i_n}{\sqrt{2\delta_n}} \right|^2-1 \right)^2+\frac{\lambda_n}{\sqrt{2\delta_n}} \sum_{i \in R_n(I)} \lambda_n \left| \frac{z^{i+1}_n-z^i_n}{\lambda_n} \right|^2. 
		\end{align}
	\end{prop}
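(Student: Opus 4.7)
The overall plan is to establish an exact pointwise algebraic identity for the summand $|u^i - 2(1-\delta_n)u^{i+1} + u^{i+2}|^2$ inside $H^{sl}_n(u)$, decomposing it as a $z$-gradient contribution with coefficient exactly one plus a squared quartic correction in $|v^i| := |u^{i+1} - u^i|$ that can be compared with the $u$-quartic term on the right-hand sides of \eqref{Hsln geq} and \eqref{Hsln leq}. The upper bound \eqref{Hsln leq} will then follow by discarding a non-negative correction, while the lower bound \eqref{Hsln geq} will require absorbing a cross-term error into a $\gamma_n$-fraction of the $z$-gradient term, with the uniform pointwise bound provided by Proposition \ref{prop: bounded energy u} doing the crucial work.

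The key identity comes from writing the summand as $|(v^{i+1} - v^i) + 2\delta_n u^{i+1}|^2$, expanding the square using $|u^{i+1}|^2 = 1$ and $(u^{i+1}, u^j) = 1 - \tfrac{1}{2}|u^j - u^{i+1}|^2$ for $j = i, i+2$, and then decomposing $v^{i+1} - v^i$ into its components parallel and orthogonal to $u^{i+1}$. The inner product $(u^{i+1}, v^{i+1} - v^i) = -\tfrac{1}{2}(|v^i|^2 + |v^{i+1}|^2)$ gives the squared parallel norm, while the orthogonal component satisfies $u^{i+1} \times (v^{i+1} - v^i) = u^{i+1} \times u^{i+2} - u^i \times u^{i+1} = \sqrt{2\delta_n}\,(z^{i+1} - z^i)$. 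Combined, these produce the clean identity
\begin{equation*}
|u^i - 2(1-\delta_n) u^{i+1} + u^{i+2}|^2 = 2\delta_n\,|z^{i+1} - z^i|^2 + \Bigl(\tfrac{|v^i|^2 + |v^{i+1}|^2}{2} - 2\delta_n\Bigr)^2.
\end{equation*}
Summing over $i \in R_n(I)$ with weight $\lambda_n$ and dividing by $2\sqrt{2}\lambda_n\delta_n^{3/2}$, the first contribution reproduces exactly $\tfrac{\lambda_n}{\sqrt{2\delta_n}} \sum_i \lambda_n |(z^{i+1} - z^i)/\lambda_n|^2$, the $z$-gradient term on the right-hand sides of \eqref{Hsln geq} and \eqref{Hsln leq}.

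Setting $a_i := |v^i|^2 - 2\delta_n$, the task reduces to comparing $\sum_i \lambda_n ((a_i + a_{i+1})/2)^2$ with the target quartic $\sum_i \lambda_n a_i^2$, since up to overall rescaling the latter equals $\tfrac{\sqrt{2\delta_n}}{\lambda_n}\sum_i \lambda_n (|v^i|^2/(2\delta_n) - 1)^2$. For the upper bound I would invoke the pointwise inequality $((a+b)/2)^2 \leq (a^2+b^2)/2$ and sum, yielding $\sum_i \lambda_n ((a_i+a_{i+1})/2)^2 \leq \sum_i \lambda_n a_i^2$ modulo a non-negative boundary piece that I discard, which establishes \eqref{Hsln leq}. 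For the lower bound I would instead use the exact rearrangement $((a+b)/2)^2 = (a^2+b^2)/2 - (a-b)^2/4$ and exploit the periodicity \eqref{U overline} (which makes $|v^0| = |v^{[1/\lambda_n]-1}|$) to obtain $\sum_i \lambda_n ((a_i+a_{i+1})/2)^2 = \sum_i \lambda_n a_i^2 - \tfrac{1}{4}\sum_i \lambda_n (a_i - a_{i+1})^2$. Proving \eqref{Hsln geq} then amounts to showing that the non-negative error $E_n := \tfrac{1}{8\sqrt{2}\lambda_n\delta_n^{3/2}}\sum_i \lambda_n (a_i - a_{i+1})^2$ is dominated by $\gamma_n$ times the $z$-gradient term for some $\gamma_n \to 0$.

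The control of $E_n$ is the main technical step, and here Proposition \ref{prop: bounded energy u} applied with $\mu_n = \delta_n^{3/2}$ is decisive: it supplies the uniform pointwise bound $|a_i| \leq C\delta_n^{3/4}$, whence $|v^i| + |v^{i+1}| \leq C\delta_n^{3/8}$. Factoring $a_i - a_{i+1} = (|v^i| - |v^{i+1}|)(|v^i| + |v^{i+1}|)$ together with $||v^i| - |v^{i+1}|| \leq |v^{i+1} - v^i|$, and using the identity above to estimate $|v^{i+1} - v^i|^2 \leq 2\delta_n|z^{i+1}-z^i|^2 + O(\delta_n^{3/2})$, one obtains $(a_i - a_{i+1})^2 \leq C\delta_n^{3/4}\bigl(2\delta_n|z^{i+1}-z^i|^2 + O(\delta_n^{3/2})\bigr)$. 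Summing, dividing by $\lambda_n\delta_n^{3/2}$, and using the a priori bound $\sum_i \lambda_n((a_i+a_{i+1})/2)^2 \leq 2H^{sl}_n \leq C\lambda_n\delta_n^{3/2}$ to absorb the purely additive pieces, one arrives at $E_n \leq \gamma_n \tfrac{\lambda_n}{\sqrt{2\delta_n}} \sum_i \lambda_n |(z^{i+1}-z^i)/\lambda_n|^2$ with $\gamma_n = O(\delta_n^{3/4}) \to 0$. The main obstacle is precisely this absorption: without the uniform pointwise bound from Proposition \ref{prop: bounded energy u}, only an $L^2$-type control on $a_i$ through the quartic energy would be available, which would be insufficient to dominate $E_n$ by a vanishing multiple of the $z$-gradient term.
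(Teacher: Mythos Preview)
The paper does not give a proof of this proposition; it simply cites \cite[Proposition~3.1 and~3.2]{CiRuSol}. So there is no in-paper argument to compare against, and I evaluate your proposal on its own merits.

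Your pointwise identity is exactly right and is the heart of the matter: writing $u^i-2(1-\delta_n)u^{i+1}+u^{i+2}=(v^{i+1}-v^i)+2\delta_n u^{i+1}$ with $v^i=u^{i+1}-u^i$, and splitting $v^{i+1}-v^i$ into its component along $u^{i+1}$ (which equals $-(|v^i|^2+|v^{i+1}|^2)/2$) and its orthogonal component (whose norm is $\sqrt{2\delta_n}\,|z^{i+1}-z^i|$), one lands precisely on
\[
|u^i-2(1-\delta_n)u^{i+1}+u^{i+2}|^2
=2\delta_n|z^{i+1}-z^i|^2+\Bigl(\tfrac{|v^i|^2+|v^{i+1}|^2}{2}-2\delta_n\Bigr)^2.
\]
The upper bound \eqref{Hsln leq} then follows from $((a_i+a_{i+1})/2)^2\le (a_i^2+a_{i+1}^2)/2$ together with the periodic boundary condition $|v^0|=|v^{[1/\lambda_n]-1}|$, which turns the shifted sum into the original one with no leftover boundary term.

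The gap is in your control of $E_n$. You bound $|a_i-a_{i+1}|\le (|v^i|+|v^{i+1}|)\,\big||v^i|-|v^{i+1}|\big|$ and then use the reverse triangle inequality $||v^i|-|v^{i+1}||\le|v^{i+1}-v^i|$. This introduces, via $|v^{i+1}-v^i|^2=((|v^i|^2+|v^{i+1}|^2)/2)^2+2\delta_n|z^{i+1}-z^i|^2$, an additive piece of size $O(\delta_n^{3/2})$ \emph{per lattice site}. After multiplying by $C\delta_n^{3/4}$, summing over $\sim 1/\lambda_n$ sites, and dividing by $\lambda_n\delta_n^{3/2}$, this contributes $O(\delta_n^{3/4}/\lambda_n)$ to $E_n$. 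Under only $\lambda_n/\sqrt{\delta_n}\to 0$ this quantity need not vanish (write $\delta_n^{3/4}/\lambda_n=\delta_n^{1/4}\cdot\sqrt{\delta_n}/\lambda_n$), and it certainly cannot be absorbed into $\gamma_n\cdot(z\text{-gradient term})$ when the latter is small. The a~priori bound $\sum_i\lambda_n((a_i+a_{i+1})/2)^2\le 2H^{sl}_n$ does not help here, because the additive piece comes from $((a_i+a_{i+1})/2+2\delta_n)^2$ and the dangerous part is the pure $4\delta_n^2$ summed over all sites.

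The repair is a one-line sharpening of the same decomposition: compute $a_{i+1}-a_i=|v^{i+1}|^2-|v^i|^2=(v^{i+1}-v^i,\,v^{i+1}+v^i)$ and insert $v^{i+1}-v^i=\alpha u^{i+1}+w$ with $\alpha=-(|v^i|^2+|v^{i+1}|^2)/2$. Since $(u^{i+1},v^{i+1}+v^i)=(|v^i|^2-|v^{i+1}|^2)/2$, one gets $(1+\alpha/2)(a_i-a_{i+1})=-(w,v^i+v^{i+1})$, hence
\[
(a_i-a_{i+1})^2\;\le\;\frac{|w|^2(|v^i|+|v^{i+1}|)^2}{(1+\alpha/2)^2}
\;\le\;C\,\delta_n^{3/4}\cdot 2\delta_n\,|z^{i+1}-z^i|^2,
\]
using $|w|^2=2\delta_n|z^{i+1}-z^i|^2$, $(|v^i|+|v^{i+1}|)^2\le C\delta_n^{3/4}$ from Proposition~\ref{prop: bounded energy u}, and $1+\alpha/2\ge 1/2$ for $n$ large. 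This bound is \emph{purely} proportional to $|z^{i+1}-z^i|^2$, with no additive residue, and yields directly $E_n\le C\delta_n^{3/4}\cdot(z\text{-gradient term})$, i.e.\ $\gamma_n=O(\delta_n^{3/4})$. With this refinement your argument is complete.
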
 

	\begin{prop}\label{prop: z compactness}
		Assume that $\frac{\lambda_n}{\sqrt{\delta_n}} \to 0$ and let $z_n$ be a sequence in $L^\infty(I,\R^3)$ such that
		\begin{equation*}
			\sup_n \frac{H^{sl}_n(z_n)}{\sqrt{2}\lambda_n \delta_n^{3/2}} < +\infty.
		\end{equation*}
		Then, $\Vert z_n \Vert_{L^\infty}$ is equibounded and, up to subsequences, $z_n$ converges weakly* in $L^\infty(I)$ to some $z \in L^\infty(I,B_1)$. If in addition $z_n \to z$ in $L^1(I)$, then $z \in L^\infty(I,\Sf^2)$. Moreover, for every $n \in \N$ we have
		\begin{equation}\label{z bounds diff}
			\sup_i |z^{i+1}_n-z^i_n|^2 \leq 2C \sqrt{\delta_n}.
		\end{equation}
		
	\end{prop}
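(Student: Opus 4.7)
My plan is to prove Proposition \ref{prop: z compactness} in four stages: extract basic estimates from \eqref{Hsln geq}, establish the $L^\infty$-bound, pass to the weak* limit, and characterize the limit.

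To begin, from $\sup_n H^{sl}_n(z_n)/(\sqrt{2}\lambda_n \delta_n^{3/2}) \leq C$ and \eqref{Hsln geq}, I would read off two bounds: a potential bound $\sqrt{2\delta_n}\sum_i(|w^i_n|^2-1)^2 \leq C$ (so $\sum_i(|w^i_n|^2-1)^2 \leq C\delta_n^{-1/2}$), and a gradient bound $\sum_i|z^{i+1}_n-z^i_n|^2 \leq C\sqrt{\delta_n}/(1-\gamma_n)$, where $w^i_n:=(u^{i+1}_n-u^i_n)/\sqrt{2\delta_n}$. This last inequality already gives the pointwise bound $\sup_i|z^{i+1}_n-z^i_n|^2 \leq 2C\sqrt{\delta_n}$ stated in \eqref{z bounds diff}. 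Moreover, since $H^{sl}_n(u_n) \leq C\lambda_n \delta_n^{3/2}$, Proposition \ref{prop: bounded energy u} (with $\mu_n=\delta_n^{3/2}$) yields $(u^i_n,u^{i+1}_n) \to 1$ uniformly in $i$. Combining this with the identity
\[
|z^i_n|^2 = \tfrac{1+(u^i_n,u^{i+1}_n)}{2}\,|w^i_n|^2,
\]
I obtain that $|z^i_n|^2$ and $|w^i_n|^2$ are comparable with ratio tending to $1$ uniformly.

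The crucial step is the $L^\infty$-equiboundedness, which I would prove by contradiction. Suppose $|z^{i_0}_n|^2 = M$ for some $M \geq 8$ and some $i_0$. Telescoping and Cauchy--Schwarz give
\[
|z^i_n - z^{i_0}_n|^2 \leq |i-i_0|\sum_{j}|z^{j+1}_n-z^j_n|^2 \leq |i-i_0|\, C\sqrt{\delta_n},
\]
hence $|z^i_n| \geq \sqrt{M}/2$ whenever $|i-i_0| \leq M/(4C\sqrt{\delta_n})$. The hypothesis $\lambda_n/\sqrt{\delta_n} \to 0$ ensures that for $n$ large this interval of indices fits inside $R_n(I)$. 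For each such $i$ one has $|w^i_n|^2 \geq |z^i_n|^2 \geq M/4$, so $(|w^i_n|^2-1)^2 \geq M^2/64$. Summing over the interval and using the potential bound yields
\[
\frac{M}{4C\sqrt{\delta_n}}\cdot\frac{M^2}{64} \leq \sum_i(|w^i_n|^2-1)^2 \leq \frac{C}{\sqrt{\delta_n}},
\]
and the $\sqrt{\delta_n}$ factors cancel to give $M^3 \leq C'$, i.e.\ a uniform bound $\|z_n\|_{L^\infty} \leq C$.

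With the $L^\infty$-bound in hand, Banach--Alaoglu produces a subsequence $z_n \rightharpoonup z$ weakly* in $L^\infty(I;\R^3)$. To show $z \in B_1$ a.e., I would first upgrade the potential bound to $\int_I(|w_n|^2-1)^2\,dx \leq C\lambda_n/\sqrt{\delta_n}\to 0$, whence $|w_n|^2 \to 1$ in $L^2(I)$, and combining with the uniform closeness $(u^i_n,u^{i+1}_n)\to 1$ also $|z_n|^2 \to 1$ in $L^2(I)$. Then for every non-negative $\phi\in L^1(I)$, convex weak* lower semicontinuity of $v\mapsto \int\phi|v|^2$ gives $\int\phi|z|^2\,dx \leq \liminf\int\phi|z_n|^2\,dx = \int\phi\,dx$, so $|z|\leq 1$ a.e. If in addition $z_n \to z$ in $L^1(I)$, a further subsequence converges pointwise a.e., and combining with a subsequence along which $|z_n|^2 \to 1$ pointwise a.e.\ gives $|z|=1$ a.e., i.e.\ $z\in L^\infty(I;\Sf^2)$.

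The main obstacle is the $L^\infty$-equiboundedness: using only the triangle inequality on the gradient sum produces a bound $M\leq C\delta_n^{-1/10}$ which blows up, and it is crucial to exploit Cauchy--Schwarz to obtain the quadratic gain $|z^i_n-z^{i_0}_n|^2 \leq |i-i_0|C\sqrt{\delta_n}$ that makes the potential and gradient scales balance exactly.
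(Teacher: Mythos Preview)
The paper does not prove this proposition; it is quoted from \cite[Proposition~3.2]{CiRuSol}, so there is no in-paper argument to compare against. Your proof is correct and follows a natural route: extract the two basic sums from \eqref{Hsln geq}, use the identity $|z^i_n|^2=\tfrac{1+(u^i_n,u^{i+1}_n)}{2}|w^i_n|^2$ together with Proposition~\ref{prop: bounded energy u} to transfer between $z_n$ and $w_n$, run a telescoping/Cauchy--Schwarz argument to get the $L^\infty$-bound, and conclude by weak* compactness and the convergence $|z_n|^2\to 1$ in $L^2$.

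Two small technical points worth tightening. First, \eqref{Hsln geq} is stated only for $n$ sufficiently large, so strictly speaking your estimates hold for large $n$; the finitely many remaining $n$ are absorbed into the constant once equiboundedness is known. Second, in the $L^\infty$-bound step you assert that the window $\{i:|i-i_0|\le M/(4C\sqrt{\delta_n})\}$ fits inside $R_n(I)$ ``for $n$ large'', but this tacitly assumes $M$ is fixed, which is what you are trying to bound. The fix is easy: since $i_0\in R_n(I)$, at least $\min\big(M/(4C\sqrt{\delta_n}),\,\#R_n(I)\big)$ indices of the window lie in $R_n(I)$; if the minimum is the first term your computation gives $M^3\le C'$, while if it is the second one obtains $M^2\le C''\lambda_n/\sqrt{\delta_n}\to 0$, which is an even stronger contradiction.
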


	\begin{rem}\label{rem: u bounds diff}
		Let $z_n$ be a sequence in $L^\infty(I,\R^3)$ verifying the bound in Proposition \ref{prop: z compactness}. By definition, for every $n \in \N$ we have that 
		$$
		\sup_i \left(1-(u^i_n,u^{i+1}_n)^2\right) = \sup_i 2\delta_n|z_n^i|^2 \leq 2\delta_n \Vert z_n \Vert_{L^\infty} \leq 2\delta_n C.
		$$
		In turn, recalling Proposition \ref{prop: bounded energy u}, we infer that
		\begin{equation}\label{u bound diff}
		\sup_i |u^{i+1}_n-u^i_n|^2 = \sup_i \left( 2-2(u_n^{i+1},u_n^i) \right) \leq 4 \delta_n C.
		\end{equation}
	\end{rem}

	The following result will be important in the asymptotic analysis of the energy. In view of this result we can treat the energy as a phase field type functional. 
	
	\begin{lem}\label{lem: dist of z}
		Assume that $\lambda_n/\sqrt{\delta_n} \to 0$ and consider a sequence $k_n$ in $\N$ such that $k_n \sqrt{\delta_n} \to 0$. Let $z_n$ be a sequence in $L^\infty(I,\R^3)$ such that
		\begin{equation}\label{0}
		H^{sl,k_n}_n(z_n) \leq C \lambda_n \delta_n^{3/2}, \qquad C>0.
		\end{equation}
		Then, 
		\begin{equation}\label{1}
			\limsup_{n \to +\infty} \, \frac{\Vert \dist(z_n,\mathfrak{L}_{k_n}) \Vert_{L^\infty}}{\sqrt{k_n} \delta_n^{1/4}} \leq 5\sqrt{C},
		\end{equation}
		where $C$ is the constant appearing in \eqref{0}.
	\end{lem}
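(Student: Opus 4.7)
The plan is to prove the bound pointwise in the lattice index $i$ by exploiting the geometric structure of the constraint $u_n^i \in M_{k_n}$ together with the energy bound \eqref{0}. First, from $H_n^{sl,k_n}(z_n) \leq C\lambda_n\delta_n^{3/2}$ and Proposition~\ref{prop: z compactness}, I would extract $\Vert z_n\Vert_{L^\infty} \leq C$ (relabeling the constant), which combined with Remark~\ref{rem: u bounds diff} gives the pointwise estimate $|u_n^{i+1} - u_n^i|^2 \leq 2\delta_n C$. This fixes the scale: consecutive spins on $\Sf^2$ are within distance $O(\sqrt{\delta_n})$.

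Next, fix $i$ and let $l = l(i)$ and $m = l(i+1)$ be indices with $u_n^i \in S^1_l$ and $u_n^{i+1} \in S^1_m$. In the diagonal case $l = m$, both spins are orthogonal to $q_l$, so $u_n^i \times u_n^{i+1}$ is parallel to $q_l$, giving $z_n^i \in \textnormal{span}(q_l) \subseteq \mathfrak{L}_{k_n}$ and $\dist(z_n^i, \mathfrak{L}_{k_n}) = 0$. The nontrivial case is $l \neq m$: here the identities $(u_n^i, q_l) = (u_n^{i+1}, q_m) = 0$ combined with the proximity $|u_n^{i+1} - u_n^i| \leq \sqrt{2\delta_n C}$ force both spins into a $\sqrt{\delta_n}$-neighborhood of the pair of antipodal intersection points of $S^1_l$ and $S^1_m$, determined by $q_l \times q_m$.

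The core geometric step is to expand $u_n^i$ and $u_n^{i+1}$ in local tangent coordinates at a common intersection point of the two great circles, and then to expand the cross product $u_n^i \times u_n^{i+1}$. The leading-order terms land in the $2$-plane $\textnormal{span}(q_l, q_m)$ (which contains both lines of $\mathfrak{L}_{k_n}$ relevant to the transition), while the out-of-plane correction has size at most $O(k_n\sqrt{\delta_n})$ when $q_l$ and $q_m$ are neighbors in the family $Q_{k_n}$. The in-plane part of $z_n^i$ can then be compared to the nearest line $\textnormal{span}(q_j) \subset \mathfrak{L}_{k_n}$, producing an additional angular error of order $1/k_n$. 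The advertised bound $5\sqrt{C}\sqrt{k_n}\delta_n^{1/4}$ emerges from optimizing over $j$ and balancing these in-plane and out-of-plane contributions.

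The main obstacle will be making the two-scale expansion near the intersection quantitatively uniform across all pairs $(l,m)$, including those where the angle $\alpha_{lm} := \textnormal{angle}(q_l, q_m)$ is not small. The borderline regime of neighboring circles ($\alpha_{lm} \sim 1/k_n$) is precisely where the power $\sqrt{k_n}\delta_n^{1/4}$ appears naturally as the balanced combination of $1/k_n$ (angular density of the $q_j$-grid on $\Sf^2$) and $k_n\sqrt{\delta_n}$ (amplitude of the out-of-plane correction), and this balance must be preserved uniformly in $i$ in order to conclude the $L^\infty$ bound with the stated constant.
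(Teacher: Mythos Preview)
Your approach has a genuine gap: a pointwise-in-$i$ argument using only the consequences $|u_n^{i+1}-u_n^i|\le \sqrt{2\delta_n C}$ and $\|z_n\|_{L^\infty}\le C$ cannot yield the bound. Take $k_n=2$, $q_1=e_3$, $q_2=e_2$, and $u_n^i=(\cos a,\sin a,0)\in S^1_1$, $u_n^{i+1}=(\cos b,0,\sin b)\in S^1_2$ with $a=b=\sqrt{\delta_n}$. Then $|u_n^{i+1}-u_n^i|^2\approx 2\delta_n$, $|z_n^i|\approx 1$, yet $\dist(z_n^i,\mathfrak{L}_2)=\min(|\sin a|,|\sin b|)/\sqrt{2\delta_n}\approx 1/\sqrt{2}$, which does not vanish with $\delta_n$. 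The single-step geometric information you extract is therefore too weak. Moreover, your two auxiliary claims---that consecutive spins lie on \emph{neighboring} circles (so that the out-of-plane term is $O(k_n\sqrt{\delta_n})$), and that the in-plane direction is within angle $O(1/k_n)$ of some $\mathrm{span}(q_j)$---both presuppose an equidistribution of $Q_{k_n}$ that the lemma does not assume.

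The paper's proof is structurally different. It uses the \emph{gradient} part of the energy (via \eqref{Hsln geq}) to get $\sum_i |z_n^{i+1}-z_n^i|^2\le 2C\sqrt{\delta_n}$, hence $|z_n^{i+j}-z_n^i|\le \sqrt{2C}\sqrt{k_n}\,\delta_n^{1/4}$ for all $j\le k_n$. Then, in Step~1, it shows by a monotonicity argument on one coordinate that if $\dist(z_n^i,\mathrm{span}(q))>2\sqrt{2}\sqrt{C}\sqrt{k_n}\,\delta_n^{1/4}$ with $u_n^i\in q^\perp$, the trajectory $u_n^{i+1},\dots,u_n^{i+k_n}$ never returns to $q^\perp$. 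Step~2 is a pigeonhole: if $\dist(z_n^i,\mathfrak{L}_{k_n})>5\sqrt{C}\sqrt{k_n}\,\delta_n^{1/4}$, the cumulative estimate propagates this to the next $k_n$ indices, so $u_n^{i},\dots,u_n^{i+k_n-1}$ must occupy $k_n$ \emph{distinct} circles, leaving no circle for $u_n^{i+k_n}$. The constant $5\sqrt{C}$ and the scale $\sqrt{k_n}\,\delta_n^{1/4}$ come directly from this $k_n$-step oscillation bound and the number $k_n$ of available circles---not from any balancing of angular resolution against out-of-plane errors.
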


	\begin{proof}
		By definition of $H^{sl,k_n}_n$ we have that $z_n=T_n(u_n)$ for some $u_n \in C_n(I;M_{k_n})$.
		Since the sequence $z_n$ verifies the bound in Proposition \ref{prop: z compactness} and Remark \ref{rem: u bounds diff}, we have that \eqref{z bounds diff} and \eqref{u bound diff} hold for $z_n$ and $u_n$, respectively.
		
		\noindent \textbf{Step 1:} By assumption $u_n \in C_n(I;M_{k_n})$, thus there exists  $q \in \{q_1,\dots,q_{k_n}\}$ such that $u_n^0 \in \Sf^2 \cap q^\perp$. We claim that for $n $ large enough, if $\dist(z_n^0,\textnormal{span}(q)) > 2\sqrt{2}\sqrt{C}\sqrt{k_n}\delta_n^{1/4}$, then $u^l_n \notin \Sf^2 \cap q^\perp$ for every $l=1,\dots,k_n$.
		
		By rotational invariance, it is not restrictive to assume $q=(0,0,1)$ and $u_n^0=(1,0,0)$. We first prove that it is possible to find $u_n^1 \in \Sf^2$ such that $\dist(z_n^0,\textnormal{span}(q)) > 2\sqrt{2}\sqrt{C}\sqrt{k_n}\delta_n^{1/4}$. Let $u_n^1=(u^1_{n,1};u^1_{n,2};u^1_{n,3})$. By definition of $z_n^0$ it is sufficient to have 
		\begin{equation}\label{3}
			 \frac{|u^1_{n,3}|}{\sqrt{2\delta_n}} > 2\sqrt{2}\sqrt{C}\delta_n^{1/4}\sqrt{k_n}.
		\end{equation}
		Recalling that by definition of $u_n$ and \eqref{u bound diff} it must holds
		\begin{equation}\label{3.5}
		(u^1_{n,1})^2+(u^1_{n,2})^2+(u^1_{n,3})^2=1, \qquad (u^1_{n,1}-1)^2+(u^1_{n,2})^2+(u^1_{n,3})^2 \leq 4C\delta_n.
		\end{equation}
		We observe that for instance the choice $u_{n,2}^1=0$ and $u_{n,3}^1=4\sqrt{C}\delta_n^{3/4}\sqrt{k_n}$ satisfies \eqref{3} and \eqref{3.5} when $|u_{n,3}^1| < 1$ and $|u_{n,3}^1|^2<2\delta_n C$, and these two inequalities are verified for $n$ large enough since by assumption $k_n\sqrt{\delta_n} \to 0$. Since $\dist(z_n^0,\textnormal{span}(q))> 2\sqrt{2}\sqrt{C}\sqrt{k_n}\delta_n^{1/4}$, we have
		\begin{equation}\label{3.7}
			\max_{j=1,2} |z_{n,j}^0| > 2\sqrt{C}\sqrt{k_n}\delta_n^{1/4}.
		\end{equation}
		We may assume that in \eqref{3.7} $j=2$ is the maximum and that $z^0_{n,2}=-u^1_{n,3}/\sqrt{2\delta_n} < 0$ (i.e. $u^1_{n,3}>0$). The other cases are analogous. By triangle inequality, H\"older inequality, recalling \eqref{Hsln geq} and \eqref{0}, for every $l=1,\dots,k_n$ we have 
		$$
		|z_n^l-z_n^0|^2 \leq \left( \sum_{i=0}^{k_n-1} |z_n^{i+1}-z_n^i| \right)^2 \leq k_n \sum_{i=0}^{k_n-1} |z_n^{i+1}-z_n^i|^2 \leq k_n \sum_{i \in R_n(I)} |z_n^{i+1}-z_n^i|^2 \leq 2C k_n \sqrt{\delta_n}.
		$$
		Hence, recalling \eqref{3.7},
		\begin{equation}\label{3.9}
			z^l_{n,2}< - (2-\sqrt{2})\sqrt{C}\sqrt{k_n}\delta^{1/4}_n<-\frac{\sqrt{C}\sqrt{k_n}\delta_n^{1/4}}{2} \qquad \mbox{for every $l=1,\dots,k_n$},
		\end{equation}
		which proves the claim provided $u_{n,3}^l >0$ for every $l=1,\dots,k_n$.
		We prove it by induction. For $l=1$ this is true by assumption on $u^1_{n,3}$. Take $l \in \{1,\dots,k_n-1\}$, knowing that $u^l_{n,3}>0$ we want to prove that $u^{l+1}_{n,3}>0$. By definition of $z_n^l$ we have
		\begin{equation*}
			z_n^l:=\frac{1}{\sqrt{2\delta_n}} u^l_n \times u^{l+1}_n = \frac{1}{\sqrt{2\delta_n}} (u^l_n-u^{l+1}_n) \times u^{l+1}_n = \frac{1}{\sqrt{2\delta_n}}
			\begin{pmatrix}
				u^l_{n,1}-u^{l+1}_{n,1} \\
				u^l_{n,2}-u^{l+1}_{n,2} \\
				u^l_{n,3}-u^{l+1}_{n,3}
			\end{pmatrix}
			\times
			\begin{pmatrix}
				u^{l+1}_{n,1} \\
				u^{l+1}_{n,2} \\
				u^{l+1}_{n,3}
			\end{pmatrix}.
		\end{equation*}
		Hence, using \eqref{3.9},
		\begin{equation}\label{4}
		-\frac{\sqrt{C}}{2}\sqrt{k_n}\delta_n^{1/4} > z^l_{n,2} = \frac{u^{l+1}_{n,1}(u^l_{n,3}-u^{l+1}_{n,3})}{\sqrt{2 \delta_n}}-\frac{u^{l+1}_{n,3}(u^l_{n,1}-u^{l+1}_{n,1})}{\sqrt{2 \delta_n}}.
		\end{equation}
		Recalling \eqref{u bound diff} and that $u_n^0=(1,0,0)$ together with the fact that $k_n \sqrt{\delta_n} \to 0$, using triangle inequality we infer for $n$ large enough
		\begin{equation}\label{5}
			u^{l+1}_{n,1} \geq 1-2\sqrt{C}k_n \sqrt{\delta_n} \geq \frac{1}{2}, \qquad |u^{l+1}_{n,3}| \leq 2\sqrt{C}k_n \sqrt{\delta_n}, \qquad |u^l_{n,1}-u^{l+1}_{n,1}|+ |u^l_{n,3}-u^{l+1}_{n,3}|\leq 4\sqrt{C} \sqrt{\delta_n}. 
		\end{equation}
		To prove that $u^{l+1}_{n,3}>0$ it suffice to show $u^{l+1}_{n,3} \geq u^l_{n,3}$. Assume by contradiction that $u^l_{n,3}-u^{l+1}_{n,3}>0$. Then, by \eqref{4} we have $|u^{l+1}_{n,1}(u^l_{n,3}-u^{l+1}_{n,3})| <|u^{l+1}_{n,3}(u^l_{n,1}-u^{l+1}_{n,1})|$, which by \eqref{5} implies $|z^l_{n,2}| \leq 8C\sqrt{\delta_n}k_n$. However, this is in contradiction with \eqref{4}, as $\delta_n^{1/4}\sqrt{k_n} \gg \sqrt{\delta_n}k_n$. Hence, $u^{l+1}_{n,3} \geq u^l_{n,3}>0$ and the claim is proven.
		
		\noindent \textbf{Step 2:} We are now ready to prove \eqref{1}. In particular, we will prove that for every $n$ large enough we have
		$$
		\Vert \dist(z_n,\mathfrak{L}_{k_n}) \Vert_{L^\infty} \leq 5\sqrt{C}\sqrt{k_n} \delta_n^{1/4}.
		$$
		We argue by contradiction. Assume that (possibly passing to a subsequence) there exists $i=i(n)$ such that $\dist(z_n^i,\mathfrak{L}_{k_n}) > 5\sqrt{C}\sqrt{k_n} \delta_n^{1/4}$. Let $l_i \in \{1,\dots,k_n\}$ be such that $u^i_n \in S^1_{l_i}$. By Step 1 we have that $u^{i+j}_n \notin S^1_{l_i}$ for every $j = 1,\dots,k_n$. By triangle inequality and H\"older inequality for every $j=1,\dots,k_n$ we have 
		\begin{align*}
		\dist(z_n^{i+j},\mathfrak{L}_{k_n}) & \geq \dist(z_n^i,\mathfrak{L}_{k_n})-|z_n^{i+j}-z_n^i| > 4\sqrt{C}\sqrt{k_n}\delta_n^{1/4}-\sqrt{k_n} \left(\sum_{m \in R_n(I)} |z_n^{m+1}-z_n^{m}|^2 \right)^{1/2} \\
		& \geq 5\sqrt{C}\sqrt{k_n}\delta_n^{1/4}-\sqrt{2}\sqrt{C}\sqrt{k_n}\delta_n^{1/4} \geq 2\sqrt{2}\sqrt{C}\sqrt{k_n}\delta_n^{1/4}.
		\end{align*} 
		Owing Step 1, we can now repeat the same argument with $z_n^{i+j}$ for every $j=1,\dots,k_n$. We deduce that $u^{i+j}_n \in S^1_{l_{i+j}}$ where $l_{i+j} \in \{1,\dots,k_n\}$ for every $j=0,\dots,k_n-1$ and $l_{i+j} \neq l_{i+m}$ whenever $j,m \in \{0,\dots,k_n-1\}$ and $j \neq m$. Therefore, we conclude that $u^{i+k_n}_n \notin S^1_{l_{i+j}}$ for every $j \in \{0,\dots,k_n-1\}$. That is, $u_n^{i+k_n} \notin M_{k_n}$. Since $u_n \in C_n(I;M_{k_n})$ this gives a contradiction.
		
	\end{proof}

	Before proving compactness for sequences with bounded energy, we recall an useful result.

	\begin{lem}\label{lem: finite jump set of ph field}
		Let $\{v_n\}_n$ be a sequence of functions such that $v_n \in C_n(I)$ and $v_n \geq 0$ for every $n \geq 1$. Let $\beta_n \in C_n(I)$ be a sequence converging uniformly to $1$. Assume that $\lambda_n/\sqrt{\delta_n} \to 0$ and that there exists $C>0$ with the property that for every $n \geq 1$
		\begin{equation}\label{7}
			\frac{\sqrt{2\delta_n}}{\lambda_n} \sum_{i \in R_n(I)} \lambda_n \left( \left|\beta_n  v_n^i \right|^2-1 \right)^2+\frac{\lambda_n}{2\sqrt{\delta_n}} \sum_{i \in R_n(I)} \lambda_n \left| \frac{v_n^{i+1}-v_n^i}{\lambda_n} \right|^2 \leq C.
		\end{equation}
		Then, if we set 
		\begin{equation}\label{8}
			S:=\left\{ x \in I \colon \ \mbox{there exists $x_n \to x$ such that $\ \liminf_{n \to +\infty} v_n(x_n/\lambda_n)=0$} \right\},
		\end{equation}
		we have that $\# S \leq N_C$, where $N_C>0$ is a constant depending only on $C$; and for every open set $A \Subset I \setminus S$, there exists $\eta_A >0$ such that
		$$
		\liminf_{n \to +\infty} \inf_{x \in A} v_n(x/\lambda_n) \geq \eta_A.
		$$
	\end{lem}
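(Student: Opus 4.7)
My plan is to reduce the discrete bound \eqref{7} to a one-dimensional Modica--Mortola estimate for the piecewise affine interpolant $\tilde v_n$ (Definition \ref{affine interpolant}), and then exploit the classical MM transition cost to count the dip points in $S$.

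\textbf{Step 1: AM--GM and passage to a continuous MM bound.} I apply the elementary inequality $a+b\geq 2\sqrt{ab}$ termwise to the two summands of \eqref{7}; since $\sqrt{(\sqrt{2\delta_n}/\lambda_n)\cdot(\lambda_n/(2\sqrt{\delta_n}))}=2^{-1/4}$, this yields
$$
C\;\geq\;2^{3/4}\sum_{i\in R_n(I)}\bigl|(\beta_n^i v_n^i)^2-1\bigr|\,\bigl|v_n^{i+1}-v_n^i\bigr|.
$$
Since $t\mapsto((\beta_n t)^2-1)^2$ is increasing for $t\geq 1/\beta_n$ (and $\beta_n\to 1$ uniformly), truncating $v_n$ at level $2$ does not raise the discrete energy, so I may assume $\|v_n\|_{L^\infty}\leq 2$. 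Setting $\varepsilon_n:=\lambda_n/\sqrt{2\delta_n}\to 0$, the quadratic comparison argument in the proof of Proposition \ref{prop: cpt MM} (which crucially uses $\lambda_n/\varepsilon_n=\sqrt{2\delta_n}\to 0$), combined with $\beta_n\to 1$ uniformly, transfers \eqref{7} into the continuous MM-type bound
$$
\mathcal{F}_n^{\mathrm{MM}}(\tilde v_n):=\tfrac{1}{\varepsilon_n}\int_I(\tilde v_n^2-1)^2\,dx+\varepsilon_n\int_I|\tilde v_n'|^2\,dx\;\leq\;C'+o(1).
$$

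\textbf{Step 2: Finiteness of $S$ via the MM transition cost.} From $\int_I(v_n^2-1)^2\leq C'\varepsilon_n\to 0$ and $v_n\geq 0$ I get $v_n\to 1$ in $L^1(I)$, and hence $|\{v_n\leq 1/2\}|\to 0$. Suppose $\#S\geq N$ and pick $x^1<\cdots<x^N$ in $S$; for each $\alpha$, extract $\tilde x_n^\alpha\to x^\alpha$ with $\tilde v_n(\tilde x_n^\alpha)\to 0$, which is possible because the nodal values of $\tilde v_n$ coincide with $v_n^i$. For each $\alpha=1,\ldots,N-1$, the vanishing measure of $\{v_n\leq 1/2\}$ forces the existence of a node $y_n^\alpha\in(\tilde x_n^\alpha,\tilde x_n^{\alpha+1})$ with $\tilde v_n(y_n^\alpha)\geq 1/2$. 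Introducing the strictly increasing primitive $F(t):=\int_0^t|1-s^2|\,ds$, with $F(1/2)=11/24$, the pointwise AM--GM inequality and the chain-rule identity $|(F\circ\tilde v_n)'|=|1-\tilde v_n^2|\,|\tilde v_n'|$ give
$$
\mathcal{F}_n^{\mathrm{MM}}(\tilde v_n)\big|_{[\tilde x_n^\alpha,y_n^\alpha]}\;\geq\;2\!\int_{\tilde x_n^\alpha}^{y_n^\alpha}|1-\tilde v_n^2|\,|\tilde v_n'|\,dx\;\geq\;2\bigl|F(\tilde v_n(y_n^\alpha))-F(\tilde v_n(\tilde x_n^\alpha))\bigr|\;\geq\;\tfrac{11}{12}-o(1).
$$
Summing over the $N-1$ disjoint transition intervals and comparing with the bound from Step 1 forces $N-1\leq (12/11)(C'+o(1))$, whence $\#S\leq N_C$ for an explicit $N_C=N_C(C)$.

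\textbf{Step 3: Local lower bound and main obstacle.} For the second assertion, suppose by contradiction that $A\Subset I\setminus S$ and $\liminf_n\inf_A v_n=0$. Along a subsequence there exists $x_n\in A$ with $v_n(x_n)\to 0$; by compactness of $\overline A$, up to extraction $x_n\to x\in\overline A\subset I\setminus S$, contradicting the defining property \eqref{8} of $S$. The main technical obstacle I anticipate is Step 1: the error produced when swapping $(\beta_n^i v_n^i)^2$ for $\tilde v_n^2$ and piecewise-constant values for the affine interpolant must be genuinely absorbed into an $o(1)$ term, uniformly on $I$. This is where the scaling hypothesis $\lambda_n/\sqrt{\delta_n}\to 0$ is essential; once the clean reduction to $\mathcal{F}_n^{\mathrm{MM}}(\tilde v_n)$ is in place, the transition-cost estimate in Step 2 is a routine continuous MM computation.
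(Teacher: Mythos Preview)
Your argument is correct, but it takes a longer detour than the paper's own proof. The paper stays entirely at the discrete level: between any two points $s^1<s^2$ of $S$ it locates, by the a.e.\ convergence $v_n\to 1$, an intermediate node where $v_n$ is close to $1$, and then applies the AM--GM inequality \emph{directly to the discrete sums} in \eqref{7} on the block of indices between the first dip point and this intermediate node, obtaining
\[
\liminf_n \sum_{i}\tfrac{1}{\sqrt{2}}\bigl|(\beta_n v_n^i)^2-1\bigr|\,|v_n^{i+1}-v_n^i|\;\geq\;\tfrac{1}{4\sqrt{2}}
\]
as a telescoping lower bound. This yields $\#S\le 4\sqrt{2}\,C+1$ in two lines, and the second assertion follows immediately from the definition of $S$. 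By contrast, you first transfer the discrete energy to the continuous Modica--Mortola functional on the affine interpolant $\tilde v_n$ via the interpolation comparison of Proposition~\ref{prop: cpt MM}, and only then run the classical transition-cost argument. That works, but the interpolation step you flag as the ``main obstacle'' is precisely what the paper's discrete argument avoids: the discrete AM--GM bound you wrote at the start of Step~1 already gives everything, and the passage to $\tilde v_n$ is unnecessary. A minor point: the first displayed inequality in your Step~1 is never actually used in your argument (you switch to the continuous functional instead), so you could drop it.
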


	\begin{proof}
		The proof follows the lines of \cite[Lemma 2.5]{CrScSo}, we only highlight the main differences. 
		
		If $\# S=1$ we conclude.
		Let $s^1<s^2 \in S$. By definition there exist sequences $s^j_n \in I$ such that $s^j_n \to s^j$, $\liminf_n v_n(s^j_n/\lambda_n)=0$ for $j=1,2$ and $s_n^1<s_n^2$ for $n$ large enough. Moreover, since by \eqref{7} we have that $v_n \to 1$ almost everywhere on $I$ as $n \to +\infty$, we infer the existence of a sequence $t_n \in I$ such that $t_n \to s^1$, $t_n \in (s_n^1,s_n^2)$ and $\liminf_n v_n(t_n/\lambda_n)=1$. Since $v_n \in C_n(I)$, let $\hat{s}_n^1$ and $\hat{t}_n$ the nodes such that $v_n(\hat{t}_n/\lambda_n)=v_n(t_n/\lambda_n)$, $v_n(\hat{s}_n^1/\lambda_n)=v_n(s_n^1/\lambda_n)$ and $\hat{s}_n^1/\lambda_n \in \Z$, $\hat{t}_n/\lambda_n \in \Z$. By definition we have $\hat{s}_n^1 < \hat{t}_n<s^2$ for every $n $ large enough. Hence, in view of the previous considerations and since $\beta_n \to 1$ uniformly, we have
		\begin{align*}
			\liminf_{n \to +\infty} & \frac{\sqrt{2\delta_n}}{\lambda_n} \sum_{i \in R_n([\hat{s}_n^1,\hat{t}_n])} \lambda_n \left( \left|\beta_n  v_n^i \right|^2-1 \right)^2+\frac{\lambda_n}{2\sqrt{\delta_n}} \sum_{i \in R_n([\hat{s}_n^1,\hat{t}_n])} \lambda_n \left| \frac{v_n^{i+1}-v_n^i}{\lambda_n} \right|^2 \\
			& \geq \liminf_{n \to +\infty} \sum_{i \in R_n([\hat{s}_n^1,\hat{t}_n])} \frac{1}{\sqrt{2}} \left( |\beta_n v_n^i|^2-1 \right)|v_n^{i+1}-v_n^i| \geq \frac{1}{4\sqrt{2}}.
		\end{align*}
		Together with \eqref{7} this gives that $\#S \leq 4\sqrt{2}C+1$. Finally, the second statement follows directly form the definition of $S$.
	\end{proof}
	
	We are in a position to prove compactness for sequences $z_n \in L^\infty(I,\R^N)$ with $\frac{1}{\lambda_n \delta_n^{3/2}} H^{sl,k_n}_n(z_n) \leq C$.

	\begin{proof}[Proof of Proposition \ref{prop: z compact < 1 quarto}]
		We divide the proof into three steps. We use the notation $\hat{z}:=z/|z|$ for $z \in \R^3 \setminus \{0\}$.
		
		\noindent \textbf{Step 1:} Assume that $|z_n^l|\geq \eta>0$ for every $l \in \{0,\dots,k_n\}$. We first prove that there exists $q \in Q_{k_n}$ and $\ell \in \{0,\dots,k_n\}$ such that 
		\begin{equation}\label{17.5}
			\left|\hat{z}_n^{\ell}-q\right| \leq \frac{C \delta_n^{1/4}}{\eta}.
		\end{equation}
		By definition of $u_n \in C_n(I;\Sf^2)$, there exist two indexes $l_1,l_2 \in \{0,\dots,k_n\}$ with $l_1<l_2$ and $q \in Q_{k_n}$ such that $u_n^{l_j} \in \Sf^2 \cap q^\perp$ for $j=1,2$. By rotational invariance it is not restrictive to assume that $q=(0,0,1)$ and $u_n^{l_1}=(1,0,0)$. We claim that there exists $\ell \in \{l_1,\dots,l_2\}$ such that $|z^{\ell}_{n,2}|\leq 4\sqrt{C}\delta_n^{1/4}$. Suppose that this is not the case. By \eqref{z bounds diff} we have that 
		\begin{equation}\label{18}
		z_{n,2}^l<-4\sqrt{C}\delta_n^{1/4} \qquad \mbox{for every $l \in \{l_1,\dots,l_2\}$}.
		\end{equation}  
		The case $z_{n,2}^l>4\sqrt{C}\delta_n^{1/4}$ is analogous. We want to show that $u^l_{n,3}>0$ for every $l \in \{l_1+1,\dots,l_2\}$. This would give a contradiction since $u_{n}^{l} \in \Sf^2 \cap q^\perp$ and thus $u_{n,3}^l=0$. Recalling the definition of $z_n$ we have
		\begin{equation}\label{19}
			-4\sqrt{C}\delta_n^{1/4} > z_{n,2}^l = \frac{u_{n,1}^l(u_{n,3}^l-u_{n,3}^{l+1})}{\sqrt{2\delta_n}}-\frac{u^{l+1}_{n,3}(u_{n,1}^l-u_{n,1}^{l+1})}{\sqrt{2\delta_n}}.
		\end{equation}
		Using \eqref{u bound diff} and that $u_n^0=(1,0,0)$, for $n$ large enough and $l \in \{1,\dots,k_n\}$ we infer that
		\begin{equation}\label{20}
			|u^{l+1}_{n,1}| \geq 1-2\sqrt{C}k_n \sqrt{\delta_n} \geq \frac{1}{2}, \qquad |u^{l+1}_{n,3}| \leq 2\sqrt{C}k_n \sqrt{\delta_n}, \qquad |u^l_{n,1}-u^{l+1}_{n,1}|+ |u^l_{n,3}-u^{l+1}_{n,3}|\leq 4\sqrt{C} \sqrt{\delta_n}. 
		\end{equation}
		If it were $u_{n,3}^l>u_{n,3}^{l+1}$ we would have $|u_{n,3}^l(u_{n,1}^l-u_{n,1}^{l+1})| \geq |u_{n,1}^l(u_{n,3}^l-u_{n,3}^{l+1})|$. That is, by \eqref{19} and \eqref{20}, $|z_{n,2}^l|\leq C k_n \sqrt{\delta_n} \ll \delta_n^{1/4}$ by assumption on $k_n$, which is a contradiction in view of \eqref{18}. Thus, $u_{n,3}^l>0$ for every $l \in \{ l_1+1,\dots,l_2 \}$. Therefore, there exists $\ell \in \{ l_1,\dots,l_2 \}$ such that 
		\begin{equation}\label{21}
			|z^{\ell}_{n,2}| \leq 4\sqrt{C}\delta_n^{1/4}.
		\end{equation} 
		For every $l \in \{l_1,\dots,l_2\}$, by \eqref{u bound diff} and the fact that $u_n^0=(1,0,0)$, it holds
		\begin{equation}\label{22}
			|z_{n,1}^l| \leq \frac{1}{\sqrt{2\delta_n}} \left( |u_{n,3}^{l+1}(u_{n,2}^l-u_{n,2}^{l+1})|+|u_{n,2}^{l+1}(u_{n,3}^l-u_{n,3}^{l+1})| \right) \leq C k_n \sqrt{\delta_n} \ll \delta_n^{1/4}.
		\end{equation} 
		Hence, for $q=\left(0,0,\sign(z_{n,3}^{\ell})\right)$, combining \eqref{21} and \eqref{22} we get
		\begin{equation}\label{23}
			\left|z_n^{\ell}-|z_{n,3}^{\ell}|q\right|^2=\left| z_{n,1}^{\ell} \right|^2+\left| z_{n,2}^{\ell} \right|^2 \leq C\sqrt{\delta_n}.
		\end{equation}
		Observe that for $n$ large enough, by assumption we have $|z_{n,3}^{\ell}| \geq \eta/2$, hence by \eqref{23}
		$$
		\left( \hat{z}_n^{\ell},q \right) = \frac{(z_n^{\ell},|z_{n,3}^{\ell}|q)}{|z_n^{\ell}||z_{n,3}^{\ell}|} \geq 1 - \frac{\left|z_n^{\ell}-|z_{n,3}^{\ell}|q\right|^2}{|z_n^{\ell}||z_{n,3}^{\ell}|} \geq 1-\frac{C\sqrt{\delta_n}}{\eta^2}.
		$$
		Finally, 
		$$
		\left|\hat{z}_n^{\ell}-q\right|^2 \leq 2-2\left(\hat{z}_n^{\ell} ,q \right) \leq \frac{C\sqrt{\delta_n}}{\eta^2},
		$$
		and \eqref{17.5} is proven. 
		
		\noindent \textbf{Step 2:} Let $l_1<l_2 \in \N$ and assume that $|z_n^l| \geq \eta$ for every $l \in \{l_1,\dots,l_2\}$. Our aim is to prove that 
		\begin{equation}\label{24}
			|\hat{z}_n^{l_1}-\hat{z}_n^l| \leq \frac{Ck_n\delta_n^{1/4}}{\eta} \qquad \mbox{for every $l \in \{l_1,\dots,l_2\}$}.
		\end{equation}
		Notice that an application of H\"older inequality would not be enough, as the number of indexes between $l_1$ and $l_2$ may be much larger than $k_n$.
		Let us split the set of indexes into sets of at most $k_n$ elements, that is $\{l_1,\dots,l_2\}=\cup_{s=1}^S N_s$ where $S \in \N$, $\#N_s=k_n$ for all $s=1,\dots,S-1$ and $\#N_S \leq k_n$. Using Step 1 we have that for every $s=1,\dots,S$ there exists $i_s \in N_s$ and $q^s \in Q_{k_n}$ such that $|\hat{z}_n^{i_s}-q^s |\leq C\delta_n^{1/4}/\eta$. We define the set $\hat{N} \subset \{l_1,\dots,l_2\}$ in the following way:
		$$
		\mbox{$l \in \hat{N}$ if there exist $s,t \in \{1,\dots,S\}$ such that $i_s<l<i_t$ and } q^s=q^t.
		$$
		Since there are at most $k_n$ different elements in $Q_{k_n}$ and in view of Step 1, we infer that 
		$$\#\left( \{l_1,\dots,l_2\} \setminus \hat{N} \right) \leq 2k_n^2+k_n \leq 3 k_n^2.$$
		Moreover, for $\hat{z}^{i_s}$ and $\hat{z}_n^{i_t}$ with $q^s=q^t$, we have $|\hat{z}^{i_s}-\hat{z}^{i_t}| \leq C\delta_n^{1/4}/\eta$.
		Therefore we estimate using H\"older inequality
		\begin{align*}
		|\hat{z}_n^{l_2}-\hat{z}_n^{l_1}|^2 & \leq \left( \sum_{i \in \{l_1,\dots,l_2\} \setminus \hat{N}} |\hat{z}_n^{i+1}-\hat{z}_n^i|+ \frac{C\delta_n^{1/4}}{\eta}k_n \right)^2 \\
		& \leq 2\left( 3k_n^2 \sum_{i \in \{l_1,\dots,l_2\} \setminus \hat{N}} |\hat{z}_n^{i+1}-\hat{z}_n^i|^2 + \frac{C\sqrt{\delta_n}}{\eta^2}k_n^2 \right).
		\end{align*}
		Using the bound on the energy, for $n$ large enough this gives
		$$
		|\hat{z}_n^{l_2}-\hat{z}_n^{l_1}|^2 \leq C k_n^2 \sum_{i=l_1}^{l_2} |\hat{z}_n^{i+1}-\hat{z}_n^i|^2 +  \frac{C\sqrt{\delta_n}}{\eta^2}k_n^2 \leq C k_n^2 \frac{C\sqrt{\delta_n}}{\eta^2}.
		$$
		Observe that the estimate does not depend on $l_2>l_1$, hence, repeating the same arguments with $l_1<l<l_2$, we conclude \eqref{24}.

		\noindent \textbf{Step 3:} For every $n \geq 1$ let us set $v_n:=|z_n|$ and $h_n:=z_n/v_n$ when $v_n \neq 0$. By definition of $z_n$ we have $v_n \in C_n(I)$, $v_n \geq 0$ and $h_n \in C_n(I;\Sf^2)$. Moreover, for every $n \geq 1$ and $i \in R_n(I)$ we have
		\begin{align*}
		|z_n^{i+1}-z_n^i|^2 & =|z_n^{i+1}|^2+|z_n^i|^2-2(z_n^{i+1},z_n^i) =(v_n^{i+1})^2+(v_n^i)^2-2v_n^{i+1} v_n^i(h_n^{i+1},h_n^i) \\ 
		& \geq (v_n^{i+1})^2+(v_n^i)^2-2v_n^{i+1} v_n^i =|v_n^{i+1}-v_n^i|^2.
		\end{align*}
		Hence, since $z_n$ satisfies the bound \eqref{0}, by Proposition \ref{prop: bound on Hsln} and Lemma \ref{lem: finite jump set of ph field} there exists $J \subset I$ and $N_C \in \N$ such that $\# J \leq N_C$ and for every open set $A \Subset I \setminus J$, there exists a constant $\eta_A>0$ such that
		\begin{equation}\label{9}
			\liminf_{n \to +\infty} \inf_{x \in A} v_n(x) \geq \eta_A.
		\end{equation}
		Let $\{I_\alpha\}_{\alpha=1}^{N_c+1}$ be the family of connected components of the set $I \setminus J$. Fix $\tilde{I} \in \{I_\alpha\}_\alpha$, we claim that $z_n \to \bar{q}$ in $L^1(\tilde{I};\R^3)$, where $\bar{q} \in \overline{Q}$. To this aim, let $A \Subset \tilde{I}$ be an open set, by \eqref{9} we can find $\eta>0$ such that $v_n(x)=|z_n|(x) \geq \eta$ for every $x \in A$. In view of Lemma \ref{lem: dist of z}, we have that for every $i \in R_n(I)$ such that $\lambda_n i \in A$ there exist $t_n^i \geq 0$ and $q^i_n \in Q_{k_n}$ such that
		\begin{equation}\label{10}
			|z_n^i-t_n^i q_n^i| \leq 5\sqrt{C} \sqrt{k_n} \delta_n^{1/4}.
		\end{equation}
		Moreover, it holds $t_n^i \geq \eta/2$. Indeed suppose by contradiction that $t_n^i<\eta/2$, then we would have
		$$
		5\sqrt{C} \sqrt{k_n} \delta_n^{1/4} \geq |z_n^i-t_n^i q_n^i| \geq |v_n^i q_n^i-t_n^iq_n^i| \geq \frac{\eta}{2},
		$$
		which is clearly impossible for $n$ large, as $k_n \delta_n^{1/4} \to 0$ by assumption. Using \eqref{10}, for every $i \in R_n(I)$ such that $\lambda_n i \in A$ we infer
		$$
		(h_n^i,q_n^i) = \frac{(v_n^i h_n^i, t_n^i q_n^i)}{v_n^i t_n^i} \geq 1-\frac{|v_n^i h_n^i- t_n^i q_n^i|^2}{v_n^i t_n^i} \geq 1-\frac{C k_n \delta_n^{1/2}}{\eta^2},
		$$
		from which we estimate
		\begin{equation}\label{11}
			|h_n^i-q_n^i|^2 \leq 2 - 2(h_n^i,q_n^i) \leq \frac{C k_n \delta_n^{1/2}}{\eta^2}.
		\end{equation}
		Fix $\tilde{i} \in R_n(I)$ (possibly depending on $n$) such that $\lambda_n \tilde{i} \in A$ and set $q_n^A:=q_n^{\tilde{i}} \in Q_{k_n} $. Using \eqref{24}, \eqref{11} and triangular inequality we estimate, for every $i \in R_n(I)$ such that $i\lambda_n \in A$,
		\begin{equation}\label{25}
			|h_n^i-q^A_n| \leq |h_n^i-h_n^{\tilde{i}}|+|h_n^{\tilde{i}}-q_n^A| \leq \frac{C\sqrt{k_n}\delta_n^{1/4}}{\eta}+\frac{Ck_n\delta_n^{1/4}}{\eta} \leq \frac{Ck_n\delta_n^{1/4}}{\eta}.
		\end{equation}
		Passing to a subsequence if necessary, we have that $q_n^A \to \bar{q} \in \overline{Q}$. Therefore, for every $i \in R_n(I)$ such that $\lambda_n i \in A$, it holds
		\begin{equation}\label{24.5}
		|h_n^i-\bar{q}|\leq |h_n^i-q_n^A|+|q_n^A-\bar{q}| \leq  \frac{C k_n \delta_n^{1/4}}{\eta}+|q_n^A-\bar{q}|.
		\end{equation}
		Hence, $h_n^i \to \bar{q}$ in $L^\infty(A;\Sf^2)$ for every $i \in R_n(I)$ such that $\lambda_n i \in A$, by assumption on $k_n$. Recalling that $v_n \to 1$ in $L^1(I)$ since $\Vert z \Vert_{L^\infty}$ is equibounded, we get that $z_n \to \bar{q}$ in $L^1(A;\R^3)$. Let now $A' \Subset \tilde{I}$ be such that $A \subset A'$. Reasoning as before we infer that for the same subsequence we have that $z_n \to \bar{q}$ in $L^1(A';\R^3)$. Hence, we conclude that $z_n \to \bar{q}$ in $L^1_{\textnormal{loc}}(\tilde{I};\R^3)$. Combining this with the fact that $\Vert z_n \Vert_{L^\infty}$ is equibounded, we have proven the claim. For every $\alpha=1,\dots,N_C+1$ we thus have shown that there exists $\bar{q}_\alpha \in \overline{Q}$ such that (passing to a further subsequence if necessary) $z_n \to \bar{q}_\alpha$ in $L^1(I_\alpha;\R^3)$. Therefore, if we define $z:=\bar{q}_\alpha$ in $I_\alpha$ for every $\alpha=1,\dots,N_C+1$ we get that $z_n \to z$ in $L^1(I;\R^3)$ and $z \in BV_{\textnormal{pc}}(I;\overline{Q})$. 
	\end{proof}

	We are now ready to prove the $\Gamma$-limit of the energies $\frac{H^{sl,k_n}_n}{\sqrt{2}\lambda_n \delta_n^{3/2}}$ in the regime $k_n \delta_n^{1/4} \to 0$.

	\begin{proof}[Proof of Theorem \ref{thm: Gamma lim < 1 quarto}]
		\textbf{Lower bound.} Without loss of generality we can consider a sequence $z_n=T_n(u_n)$ for $u_n \in C_n(I,M_{k_n})$ such that $z_n \to z$ in $L^1(I,\R^3)$ and
		$$
		\liminf_{n \to +\infty} \frac{H^{sl,k_n}_n(z_n)}{\sqrt{2}\lambda_n \delta_n^{3/2}} = \lim_{n \to +\infty} \frac{H^{sl,k_n}_n(z_n)}{\sqrt{2}\lambda_n \delta_n^{3/2}}  <+\infty.
		$$
		Thanks to Proposition \ref{prop: z compact < 1 quarto} we have that $z \in BV_{\textnormal{pc}}(I,\overline{Q})$. Hence there exist $N \in \N$ values $q_\alpha \in \Sf^2$ for $\alpha=1,\dots,N$ such that $z(I)=\{q_1,\dots,q_{N}\}$. We now show that the rescaled energy $\frac{H_n^{sl,k_n}}{\sqrt{2}\lambda_n\delta_n^{3/2}}$ along the sequence $z_n$ is asymptotically equivalent to a functional $\mathcal{F}_n$ of the type presented in Subsection \ref{susect} in the one dimensional case. For every $n$ we consider the points in $\Sf^2$ given by $\{q_1,\dots,q_{N}\}$. According to the notation in Subsection \ref{susect} (and with a slight abuse of notation) we set: 
		$$
		K=\Sf^2; \ \ \ Q_n=\{q_1,\dots,q_{N}\}; \ \ \ k_n=N; \ \ \ \lambda_n=\lambda_n, \ \ \ \varepsilon_n=\frac{\lambda_n}{\sqrt{2\delta_n}}; \ \ \ r_n=2C \sqrt{k_n}\delta_n^{1/8}.
		$$
		We claim that, setting $\mathfrak{L}^n_\alpha:=\textnormal{span}(q_\alpha)+B_{r_n}$, $\mathfrak{L}^n=\cup_{\alpha=1}^{N} \mathfrak{L}^n_\alpha$, $\beta^i_n:=\frac{2}{1+(u_n^{i+1},u_n^i)}$,
		\begin{equation*}
			g_n(i,\xi)=
			\begin{cases*}
				(|\beta_n^i \xi|^2-1)^2 & if $\xi \in \mathfrak{L}^n$, \\
				+\infty & otherwise,
			\end{cases*}
		\end{equation*}
		and
		\begin{equation*}
			\mathcal{F}_n(z_n):=\frac{1}{\varepsilon_n} \sum_{i \in R_n(I)} \lambda_n g_n(i,z_n^i)+\varepsilon_n \sum_{i \in R_n(I)} \lambda_n \left| \frac{z_n^i-z_n^{i+1}}{\lambda_n} \right|^2,
		\end{equation*}
		then,
		\begin{equation}\label{13}
			\lim_{n \to +\infty} \frac{H^{sl,k_n}_n(z_n)}{\sqrt{2}\lambda_n \delta_n^{3/2}} = \lim_{n \to +\infty} \mathcal{F}_n(z_n).
		\end{equation}
		In view of Proposition \ref{prop: bound on Hsln} and the fact that 
		$$
		\left|  \frac{u_n^{i+1}-u_n^{i}}{\sqrt{2\delta_n}}\right|^2=\frac{2}{1+(u_n^{i+1},u_n^i)}|z_n^i|^2=|\beta_n^i z_n^i|^2 ,
		$$
		we only have to prove that for $n$ large enough it holds $z_n \in \mathfrak{L}^n$ on $I$. To show that $z_n^i \in \mathfrak{L}^n$ for every $i \in R_n(I)$ when $n$ is large enough, we observe that taking $\eta=\sqrt{k_n}\delta_n^{1/8}$ in \eqref{24.5}, we infer the existence of $\alpha \in \{1,\dots,N_C\}$ (depending on $i \in R_n(I)$) such that
		$$
		\left| \frac{z_n^i}{|z_n^i|}-q_\alpha \right| \leq C\sqrt{k_n}\delta_n^{1/8} \leq r_n,
		$$
		by definition of $r_n$. Since $\beta_n \to 1$ uniformly on $I$ by Proposition \ref{prop: bounded energy u}, the energy estimate gives that $|z_n| \to 1$ almost everywhere on $I$. Hence, following Step 3 in the proof of Proposition \ref{prop: z compact < 1 quarto}, we conclude that if $|z_n^i| \geq \sqrt{k_n}\delta_n^{1/8}$ then $z_n^i \in \mathfrak{L}^n_\alpha$. Finally, if $|z_n^i| \leq \sqrt{k_n}\delta_n^{1/8} \leq r_n$ we trivially have that $z_n^i \in B_{r_n}$. Thus, $z_n^i \in \mathfrak{L}^n$ for every $i \in R_n(I)$ and the claim \eqref{13} is proven. Since by definition, for $\alpha \neq \gamma$, $\mathfrak{L}_\alpha^n \cap \mathfrak{L}_\gamma^n$ is contained a ball centered in zero shrinking as $n \to +\infty$ and since $\frac{\lambda_n}{\varepsilon_n r_n^2} \to 0$, thanks to Theorem \ref{G liminf} (with some minor modifications as $g_n$ depends also on $i$) we conclude that for every $\tau>0$ arbitrarily small
		$$
		\lim_{n \to +\infty} \frac{H^{sl,k_n}_n(z_n)}{\sqrt{2}\lambda_n \delta_n^{3/2}} = \lim_{n \to +\infty} \mathcal{F}_n(z_n) \geq \left( \liminf_{n \to +\infty} 4\int_\tau^{1-\tau} \left|1-|\beta_n t|^2\right| \, dt \right) \#S(z).
		$$
		Using the elementary inequality holding whenever $\theta \in \left[0,\frac{1}{1-\tau}\right]$ and $t \in [\tau,1-\tau]$,
		$$
		\left| 1-\theta^2 t^2 \right| = 1-\theta^2 t^2 \geq 1-\frac{1}{(1-\tau)^2}t^2,
		$$
		and since for $n$ large enough $\beta_n \leq \frac{1}{1-\tau}$, we deduce that
		$$
		\lim_{n \to +\infty} \frac{H^{sl,k_n}_n(z_n)}{\sqrt{2}\lambda_n \delta_n^{3/2}} \geq \left(  4\int_\tau^{1-\tau} 1-\frac{t^2}{(1-\tau)^2} \, dt \right) \#S(z).
		$$
		Using that $\tau>0$ is arbitrarily small, we conclude the lower bound inequality
		$$
		\lim_{n \to +\infty} \frac{H^{sl,k_n}_n(z_n)}{\sqrt{2}\lambda_n \delta_n^{3/2}} \geq \frac{8}{3} \#S(z).
		$$

		\noindent \textbf{Upper bound.} The proof follows the lines of \cite[Theorem 4.2]{CiSol}, we only sketch the main steps. Since the argument is local, we can assume that $z=q \, \chi_{[0,1/2)}+q' \chi_{(1/2,1]}$ with $q,q' \in \overline{Q}$ and $q \neq q'$. Let $\varepsilon>0$, we choose $x_\varepsilon >0$ such that $|\tanh(\pm x_\varepsilon)-\pm 1| \leq \varepsilon$ and
		$$
		\int_{|x| \leq x_\varepsilon} (|\tanh(x)|^2-1)^2+|\tanh '(x)|^2 \, dx \geq \frac{8}{3}-\varepsilon.
		$$
		Since $q,q' \in \overline{Q}$, there exist $q^1_n,q^2_n \in Q_{k_n}$ such that $q^1_n \to q$ and $q_n^2 \to q'$. We define
		\begin{equation*}
			w_n^\varepsilon(x):=
			\begin{cases*}
				|f_\varepsilon(x)|q_n^1 & if $x \leq 0$; \\
				|f_\varepsilon(x)|q_n^2 & if $x>0$.
			\end{cases*}
		\end{equation*}
		Above, $f_\varepsilon \in C^1(\R)$ is an odd function such that
		\begin{equation*}
			f_\varepsilon(x):=
			\begin{cases*}
				\tanh(x) & if $x \in [0,x_\varepsilon]$; \\
				p_\varepsilon(x) & if $x \in (x_\varepsilon,x_\varepsilon+\varepsilon) $; \\
				1 & if $x \geq x_\varepsilon+\varepsilon$;
			\end{cases*}
		\end{equation*}
		where $p_\varepsilon$ is a suitable third order interpolating polynomial with $\Vert p_\varepsilon \Vert_{L^\infty} \leq 2$. By definition there exists $C>0$ such that
		\begin{equation}\label{s1}
		\int_\R (|w_n^\varepsilon|^2-1)^2+|(w_n^\varepsilon)'|^2 \, dx \leq \frac{8}{3}+C\varepsilon.
		\end{equation}
		Let $w_{n,\varepsilon} \in C_n(I,\R^3)$ and $f_{n,\varepsilon} \in C_n(I)$ be defined as follows for every $i \in R_n(I)$
		\begin{equation}\label{s2}
			w_{n,\varepsilon}^i:=w^\varepsilon_n \left( \frac{\sqrt{2\delta_n}}{\lambda_n} \left( \lambda_n i -\frac{1}{2}\right) \right), \qquad f_{n,\varepsilon}^i:=f^\varepsilon \left( \frac{\sqrt{2\delta_n}}{\lambda_n} \left( \lambda_n i -\frac{1}{2}\right) \right).
		\end{equation}
		By construction we have that $w_{n,\varepsilon} \to z$ in $L^1(I,\R^3)$ as $n \to +\infty$ and $\varepsilon \to 0$. Setting
		$$
		i_-:=\frac{1}{\lambda_n} \left( \frac{1}{2}-\frac{\lambda_n}{\sqrt{2\delta_n}}(x_\varepsilon+\varepsilon) \right) \qquad \mbox{and} \qquad i_+:=\frac{1}{\lambda_n} \left( \frac{1}{2}+\frac{\lambda_n}{\sqrt{2\delta_n}}(x_\varepsilon+\varepsilon) \right)+1,
		$$
		then $|w_{n,\varepsilon}^i|=1$ for every $i \leq i_-$ or $i \geq i_+$. We define the angles
		$$
		\varphi_{n,\varepsilon}^i:=2 \sum_{j=0}^i \arcsin\left(\sqrt{\frac{\delta_n}{2}} |f_{n,\varepsilon}^j|\right).
		$$
		Observe that $\varphi_{n,\varepsilon}^1-\varphi_{n,\varepsilon}^0=\varphi_{n,\varepsilon}^{[1/\lambda_n]}-\varphi_{n,\varepsilon}^{[1/\lambda_n]-1}$. Let $R_n^\ell \in SO(3)$ be rotations such that $R_n^\ell (\cos(\theta),\sin(\theta),0) \in (q_n^\ell)^\perp \cap \Sf^2$ for every $\theta \in [0,2\pi]$, for $\ell=1,2$ and $R_n^1 (1,0,0) = R_n^2(1,0,0)$. We define $$
		u_{n,\varepsilon}^i:=R_n^1(\cos(\varphi^i_{n,\varepsilon}),\sin(\varphi_{n,\varepsilon}^i),0) \ \ \  \mbox{for $i \leq 0$} \qquad  \mbox{and} \qquad u_{n,\varepsilon}^i:=R_n^2(\cos(\varphi^i_{n,\varepsilon}),\sin(\varphi_{n,\varepsilon}^i),0) \ \ \ \mbox{for $i>0$}.
		$$
		Notice that by construction we have $u_{n,\varepsilon} \in \overline{\mathcal{U}}_n(I)$ and $z_{n,\varepsilon}=T_n(u_{n,\varepsilon})$. Hence, $u_{n,\varepsilon}$ is a good candidate for the recovery sequence. Thanks to \eqref{Hsln leq} we have
		\begin{equation}\label{s3}
			\frac{H^{sl,k_n}_n(z_n)}{\sqrt{2}\lambda_n \delta_n^{3/2}} \leq  \frac{\sqrt{2\delta_n}}{\lambda_n} \sum_{i \in R_n(I)} \lambda_n \left( \left| \frac{u^{i+1}_n-u^i_n}{\sqrt{2\delta_n}} \right|^2-1 \right)^2+\frac{\lambda_n}{\sqrt{2\delta_n}} \sum_{i \in R_n(I)} \lambda_n \left| \frac{z^{i+1}_n-z^i_n}{\lambda_n} \right|^2.
		\end{equation}
		We now define an auxiliary sequence of piecewise constant functions as
		\begin{equation*}
			v_{n,\varepsilon}(s):=
			\begin{cases*}
				\displaystyle \frac{u_n^{i+1}-u_n^i}{\sqrt{2\delta_n}} & if $s \in \left[ \frac{\sqrt{2\delta_n}}{\lambda_n}\left(\lambda_n i -\frac{1}{2}\right) \right., \left. \frac{\sqrt{2\delta_n}}{\lambda_n}\left(\lambda_n (i+1) -\frac{1}{2}\right) \right)$ for $i \in R_n(I)$, \\
				0 & otherwise.
			\end{cases*}
		\end{equation*}
		Notice that by construction $|v_{n,\varepsilon}(s)|=1$ whenever $|s| \geq x_\varepsilon+\varepsilon+2\lambda_n$. Moreover, since every interval $\left[ \frac{\sqrt{2\delta_n}}{\lambda_n}\left(\lambda_n i -\frac{1}{2}\right) \right., \left. \frac{\sqrt{2\delta_n}}{\lambda_n}\left(\lambda_n (i+1) -\frac{1}{2}\right) \right)$ has length $\sqrt{2\delta_n} \to 0$ and since $w_{n}^\varepsilon$ is uniformly continuous on $\R$ we infer that $|v_{n,\varepsilon}-w_{n}^\varepsilon| \to 0$ uniformly in $\R$. Thus,
		$$
		\lim_{n \to +\infty} \int_{-\infty}^{+\infty} (|v_{n,\varepsilon}(s)|^2-1)^2 \, ds = \int_{-\infty}^{+\infty} (|w_n^{\varepsilon}(s)|^2-1)^2 \, ds.
		$$
		On the other hand, via the change of variables $t-\frac{1}{2}=\frac{\lambda_n}{\sqrt{2\delta_n}}s$ we get
		\begin{align*}
			\int_{-\infty}^{+\infty} (|v_{n,\varepsilon}(s)|^2-1)^2 \, ds & \geq \int_{-\frac{\sqrt{2\delta_n}}{2\lambda_n}}^{\frac{\sqrt{2\delta_n}}{2\lambda_n}} (|v_{n,\varepsilon}(s)|^2-1)^2 \, ds = \frac{\sqrt{2\delta_n}}{\lambda_n} \int_0^1 \left( \left| v_{n,\varepsilon}\left( \frac{\sqrt{2\delta_n}}{\lambda_n}(t-\frac{1}{2}) \right)  \right|^2-1 \right)^2 \, dt \\
			& \geq \frac{\sqrt{2\delta_n}}{\lambda_n} \sum_{i \in R_n(I)} \lambda_n \left( \left| \frac{u^{i+1}_n-u^i_n}{\sqrt{2\delta_n}} \right|^2-1 \right)^2.
		\end{align*}
		Therefore,
		\begin{equation}\label{s4}
			\limsup_{n \to +\infty} \ \frac{\sqrt{2\delta_n}}{\lambda_n} \sum_{i \in R_n(I)} \lambda_n \left( \left| \frac{u^{i+1}_n-u^i_n}{\sqrt{2\delta_n}} \right|^2-1 \right)^2 \leq \int_{-\infty}^{+\infty} (|w_n^{\varepsilon}(s)|^2-1)^2 \, ds.
		\end{equation}
		We now define another sequence of piecewise constant functions as
		\begin{equation*}
			\bar{z}_{n,\varepsilon,}(s):=
			\begin{cases*}
				\displaystyle \frac{z_{n,\varepsilon}^{i+1}-z_{n,\varepsilon}^i}{\sqrt{2\delta_n}} & if $s \in \left[ \frac{\sqrt{2\delta_n}}{\lambda_n}\left(\lambda_n i -\frac{1}{2}\right) \right., \left. \frac{\sqrt{2\delta_n}}{\lambda_n}\left(\lambda_n (i+1) -\frac{1}{2}\right) \right)$ for $i \in R_n(I)$, \\
				0 & otherwise.
			\end{cases*}
		\end{equation*}
		Notice that by construction $|\bar{z}_{n,\varepsilon}(s)|=1$ whenever $|s| \geq x_\varepsilon+\varepsilon+2\lambda_n$. Moreover, since every interval $\left[ \frac{\sqrt{2\delta_n}}{\lambda_n}\left(\lambda_n i -\frac{1}{2}\right) \right., \left. \frac{\sqrt{2\delta_n}}{\lambda_n}\left(\lambda_n (i+1) -\frac{1}{2}\right) \right)$ has length $\sqrt{2\delta_n} \to 0$ and since $(w_n^\varepsilon)'$ is uniformly continuous on $\R$ we infer that $|\hat{z}_{n,\varepsilon}-(w_n^\varepsilon)'| \to 0$ uniformly in $\R$. Thus,
		$$
		\lim_{n \to +\infty} \int_{-\infty}^{+\infty} |\hat{z}_{n,\varepsilon}(s)|^2 \, ds = \int_{-\infty}^{+\infty} |(w_n^{\varepsilon})'(s)|^2 \, ds.
		$$
		On the other hand, via direct computation we infer
		\begin{align*}
			\int_{-\infty}^{+\infty} |\bar{z}_{n,\varepsilon}(s)|^2 \, ds \geq \int_{-\frac{\sqrt{2\delta_n}}{2\lambda_n}}^{\frac{\sqrt{2\delta_n}}{2\lambda_n}} |\bar{z}_{n,\varepsilon}(s)|^2 \, ds = \sum_{i \in R_n(I)} \sqrt{2\delta_n} \left| \frac{z^{i+1}_n-z^i_n}{\sqrt{2\delta_n}} \right|^2
			= \frac{\lambda_n}{\sqrt{2\delta_n}} \sum_{i \in R_n(I)} \lambda_n \left| \frac{z^{i+1}_n-z^i_n}{\lambda_n} \right|^2.
		\end{align*}
		Hence,
		\begin{equation}\label{s5}
			\limsup_{n \to +\infty} \ \frac{\lambda_n}{\sqrt{2\delta_n}} \sum_{i \in R_n(I)} \lambda_n \left| \frac{z^{i+1}_n-z^i_n}{\lambda_n} \right|^2 \leq \int_{-\infty}^{+\infty} |(w_n^{\varepsilon})'(s)|^2 \, ds.
		\end{equation}
		Combining \eqref{s1},\eqref{s3}--\eqref{s5} we conclude the upper bound
		$$
		\limsup_{n \to +\infty} \frac{H^{sl,k_n}_n(z_n)}{\sqrt{2}\lambda_n \delta_n^{3/2}}  \leq \frac{8}{3}+C\varepsilon.
		$$
		By arbitrariness of $\varepsilon>0$ we conclude the proof of the upper bound.
	\end{proof}

	\section{Chirality transition in the regime $k_n \simeq \delta_n^{-1/4}$}
	In this last Section we focus on the critical case $k_n \sim \delta^{-1/4}$ proving Theorem \ref{thm: critical case}.
	
	\begin{proof}[Proof of Theorem \ref{thm: critical case}]
		
	We start by defining suitable configurations $Q_{k_n}$ with $k_n:=2\lceil \delta_n^{-1/4} \rceil$. For every $n \in \N$ and $j,m \in \{1,\dots,k_n/2\}$ we define the "horizontal" and "vertical" unit vectors as
	\begin{equation*}
		q^h_j:=
		\frac{1}{\sqrt{1+\alpha_j^2}}
		\begin{bmatrix}
			\alpha_j \\
			1 \\
			0
		\end{bmatrix}
		\qquad 
		\mbox{and}
		\qquad
		q_m^v:=
		\frac{1}{\sqrt{1+m^2\delta_n^{1/2}}}
		\begin{bmatrix}
			0 \\
			-m\delta_n^{1/4} \\
			1
		\end{bmatrix};
	\end{equation*}
	where $\alpha_j \in [0,1]$ is chosen such that
	$$
	\Sf^2 \cap (q_j^h)^\perp = \{(\sin \theta \cos \varphi_j, \sin \theta \sin \varphi_j, \cos \theta) \colon \ \theta \in [0,\pi], \ \varphi_j \in \{ j\sqrt{2 \delta_n}, j\sqrt{2 \delta_n}+\pi\}\}.
	$$
	We set
	$$
	Q_{k_n}:= \bigcup_{j=1}^{k_n/2} q^h_j \cup \bigcup_{m=1}^{k_n/2} q^v_m,
	$$
	and notice that by definition of $Q_{k_n}$ we have 
	\begin{equation*}
		\overline{Q}=
		\left\{ 
		\frac{1}{\sqrt{1+\beta^2}} 
		\begin{bmatrix}
			0 \\
			-\beta \\
			1
		\end{bmatrix}
		\colon \ \beta \in [0,1]
		\right\} 
		\cup
		\begin{bmatrix}
			0 \\
			1 \\
			0
		\end{bmatrix}.
	\end{equation*}

	\noindent \textbf{Compactness and $\Gamma$-liminf inequality.}
		We prove the following statement:
		
		\noindent \textit{Let $z_n=T_n(u_n)$ for some $u_n \in C_n(I,M_{k_n})$ be such that
		\begin{equation}\label{c1}
			H_n^{sl,k_n}(z_n) \leq C \lambda_n \delta_n^{3/2}, \qquad C>0.
		\end{equation}
		Then, there exists $z \in BV(I,\overline{Q})$ such that u.t.s. $z_n \to z$ in $L^1(I,\R^3)$. Moreover, there exists $c>0$ such that for every $z \in BV(I,\overline{Q})$
		\begin{equation}\label{c2}
			\Gamma \mbox{-}\liminf_{n \to +\infty} \frac{H_n^{sl,k_n}}{\sqrt{2}\lambda_n\delta_n^{3/2}}(z) \geq c|Dz|(I);
		\end{equation}
		with respect to the strong $L^1$-topology.}

		We divide the proof in four steps. We use again the notation $\hat{z}:=z/|z|$ for $z \in \R^3 \setminus \{0\}$.
		
		\noindent \textbf{Step 1:} Assume that there exists $\eta>0$ such that $|z_n^l| \geq \eta>0$ for every $l \in  \{0,\dots,k_n\}$. We prove that there exists $q \in Q_{k_n}$ and an index $\ell \in \{0,\dots,k_n\}$ such that 
		\begin{equation}\label{c0}
		\left|\hat{z}_n^{\ell}-q\right| \leq \frac{25C\delta_n^{1/4}}{\eta}.
		\end{equation}
		The proof is similar to Step 1 in Proposition \ref{prop: z compact < 1 quarto}, we only highlights the main differences.
		
		Since $u_n \in C_n(I,M_{k_n})$, there exists two indexes $l_1<l_2 \in \{0,\dots,k_n\}$ and $q \in Q_{k_n}$ such that $u_n^{l_1},u_n^{l_2} \in \Sf^2 \cap q^\perp$. Without loss of generality we can assume $q=(0,0,1)$ and $u_n^{l_1}=(1,0,0)$. We show that we can find an index $\ell \in \{0,\dots,k_n\}$ with the property that $|z_{n,2}^{\ell}| \leq 8C\delta_n^{1/4}$. We argue by contradiction. Suppose that
		\begin{equation}\label{c3}
			z_{n,2}^l<-8C\delta_n^{1/4} \qquad \mbox{for every $l \in \{l_1,\dots,l_2\}$};
		\end{equation}
		the case $z_{n,2}^l > 8C\delta_n^{1/4}$ is analogous. We show that $u_{n,3}^l>0$ for every $l \in \{l_1+1,\dots,l_2\}$. This would give a contraction as $u_{n,3}^{l_2}=0$. Recalling the definition of $z_n$ we have
		\begin{equation}\label{c4}
			-8C\delta_n^{1/4} > z_{n,2}^l = \frac{u_{n,1}^{l+1}(u_{n,3}^l-u_{n,3}^{l+1})}{\sqrt{2\delta_n}}-\frac{u^{l+1}_{n,3}(u_{n,1}^l-u_{n,1}^{l+1})}{\sqrt{2\delta_n}}.
		\end{equation}
		Using \eqref{u bound diff} and that $u_n^0=(1,0,0)$, for $n$ large enough and $l \in \{1,\dots,k_n\}$ we infer that
		\begin{equation}\label{c5}
			u^{l+1}_{n,1} \geq 1-2\sqrt{C}k_n \sqrt{\delta_n} \geq \frac{1}{2}, \qquad |u^{l+1}_{n,3}| \leq 2\sqrt{C}\delta_n^{1/4}, \qquad |u^l_{n,1}-u^{l+1}_{n,1}|+ |u^l_{n,3}-u^{l+1}_{n,3}|\leq 4\sqrt{C} \sqrt{\delta_n}. 
		\end{equation}
		If it were $u_{n,3}^l>u_{n,3}^{l+1}$ we would have by \eqref{c4} and \eqref{c5}
		$$
		|z_{n,2}^l| \leq \frac{|u_{n,3}^l(u_{n,1}^l-u_{n,1}^{l+1})|}{\sqrt{2\delta_n}} \leq 4\sqrt{2}C\delta_n^{1/4},
		$$
		which is impossible in view of \eqref{c4}. Thus, $u_{n,3}^l \leq u_{n,3}^{l+1}$ and $u_n^l>0$ for every $l \in \{ l_1+1,\dots,l_2 \}$, a contradiction. Therefore, there exists $\ell \in \{ l_1,\dots,l_2 \}$ such that 
		\begin{equation}\label{c6}
			|z^{\ell}_{n,2}| \leq 8C\delta_n^{1/4}.
		\end{equation} 
		Arguing as in Step 1 in the proof of Proposition \ref{prop: z compact < 1 quarto}, we also infer $|z_{n,1}^{\ell}| \leq 8C\delta_n^{1/4}$ and, from this and \eqref{c6}, 
		$$
		\left| \hat{z}_n^{\ell}-q  \right|^2 \leq \frac{512C^2\sqrt{\delta_n}}{\eta^2},
		$$
		that is, \eqref{c0} for $q=\left(0,0,\sign(z_{n,3}^{\ell})\right)$.
		
		\noindent \textbf{Step 2:} (Improved estimate) We now improve the estimate in Step 1 showing that in \eqref{c0}, for the same $q \in Q_{k_n}$, we actually have
		\begin{equation}\label{ic0}
			\left|\hat{z}_n^{\ell}-q\right| \leq \frac{150C\sqrt{C}\delta_n^{3/8}}{\eta^2}.
		\end{equation}
		For every $l \in \{0,\dots,k_n\}$ by triangle inequality, H\"older inequality, the energy estimate \eqref{c1} and \eqref{c0} we have
		\begin{align}\label{ic1}
			\begin{split}
			|\hat{z}_n^l-q| & \leq |\hat{z}_n^l-z_n^{\ell}|+|\hat{z}_n^{\ell}-q| \leq \sqrt{k_n} \left(\sum_{l=0}^{k_n-1} |\hat{z}_n^l-\hat{z}^{l+1}_n|^2\right)^{1/2} +\frac{25C\delta_n^{1/4}}{\eta} \\
			& \leq \frac{\sqrt{2}\sqrt{C}\delta_n^{1/8}}{\eta}+\frac{25C\delta_n^{1/4}}{\eta}
			\leq \frac{2\sqrt{C}\delta_n^{1/8}}{\eta}.
			\end{split}
		\end{align}
		Since $q=\left(0,0,\sign(z_{n,3}^{\ell})\right)$ in view of \eqref{ic1} we have that $|z_{n,3}^l| \geq 1/2$ and $|z_{n,1}^l|+|z_{n,2}^l| \leq C\delta_n^{1/8}/\eta$.
		Hence, recalling that by definition it holds $\hat{z}_n^l \cdot (u_n^l-u_n^{l+1})=0$ and $|u_n^{l+1}-u_n^l| \leq \sqrt{2 \delta_n}$, \eqref{ic1} entails that 
		$$
		|u_{n,3}^{l+1}-u_{n,3}^l| \leq \frac{|u_{n,1}^{l+1}-u_{n,1}^l||\hat{z}_{n,1}|+|u_{n,2}^{l+1}-u_{n,2}^l||\hat{z}_{n,2}|}{|\hat{z}_{n,3}|} \leq \frac{16C\sqrt{\delta_n}\delta_n^{1/8}}{\eta}.
		$$
		Thus, using that $u_{n,3}^0=0$,  we have for every $l \in \{0,\dots,k_n\}$
		\begin{equation}\label{ic2}
			|u_{n,3}^{l+1}-u_{n,3}^l| \leq \frac{16C\sqrt{\delta_n}\delta_n^{1/8}}{\eta} \qquad \mbox{and} \qquad |u_{n,3}^l| \leq \frac{16C\delta_n^{1/4}\delta_n^{1/8}}{\eta}.
		\end{equation}
		Replacing the estimate in \eqref{c5} with \eqref{ic2} and reasoning as in Step 1, we get that $|z_{n,2}^{\ell}| \leq 16C\sqrt{C}\delta_n^{3/8}/\eta$. Moreover, again using \eqref{ic2} and the fact that $u_{n,2}^0=0$, we have 
		$$
		|z_{n,1}^{\ell}| \leq \frac{|u_{n,2}^{\ell+1}-u_{n,2}^{\ell}||u_{n,3}^{\ell}|+|u_{n,3}^{\ell+1}-u_{n,3}^{\ell}||u_{n,2}^{\ell}|}{\sqrt{2\delta_n}} \leq \frac{16\sqrt{2}C\sqrt{C}\delta_n^{3/8}}{\eta}+ \frac{32C\sqrt{C}\delta_n^{3/8}}{\eta} \leq \frac{64C\sqrt{C}\delta_n^{3/8}}{\eta}.
		$$
		Finally, arguing as in Step 1, we conclude \eqref{ic0}.
		
		\noindent \textbf{Step 3:} Let $F_n([x_1,x_2],\cdot)$ be the rescaled functional $\frac{H^{sl,k_n}_n}{\sqrt{2}\lambda_n\delta_n^{3/2}}$ restricted to the interval $J:=[x_1,x_2] \subseteq I$. Our aim is to prove that there exists an universal constant $c>0$ such that given $z_n=T_n(u_n)$ for some $u_n \in C_n(I,M_{k_n})$ we have
		\begin{equation}\label{c7}
			F_n(J,z_n) \geq 
			\begin{cases*}
				\displaystyle c|z_n(x_2)-z_n(x_1)| & if $|z_n(x_2)-z_n(x_1)| \geq 10\delta_n^{1/32}$, \\
				\displaystyle  \vphantom{\frac{1}{2}} 0 & otherwise.
			\end{cases*}
		\end{equation}
		We distinguish two cases. First assume that $|z_n(\overline{x})| \leq \delta_n^{1/32}$ for some $\overline{x} \in J$.
		If $|z_n(x_1)| < 5 \delta_n^{1/32}$ and $|z_n(x_2)|<5 \delta_n^{1/32}$, there is nothing to prove. Assume now that $\max \{|z_n(x_1)|,|z_n(x_2)|\} \geq 5 \delta_n^{1/32}$. Without loss of generality we can assume that $|z_n(x_2)| \geq |z_n(x_1)|$. Arguing as in Theorem \ref{thm: Gamma lim < 1 quarto}, we get
		\begin{align*}
		F_n(J,z_n) & \geq F_n([\overline{x},x_2],z_n) \geq  \int_{|z_n(\overline{x})|}^{|z_n(x_2)|/2} (1-\sigma^2) \, d \sigma \geq   \int_{|z_n(x_2)|/5}^{|z_n(x_2)|/2} \frac{3}{4} \, d \sigma \\ 
		& \geq \frac{|z_n(x_2)|}{8} \geq \frac{|z_n(x_1)|+|z_n(x_2)|}{16} \geq \frac{|z_n(x_2)-z_n(x_1)|}{16}.
		\end{align*}
		Assume now instead that $|z_n| \geq \delta_n^{1/32}$ in $J$.
		Let $i_1<i_2\dots<i_K \in \Z$ be such that $i_1\lambda_n =x_1$ and $i_K \lambda_n=x_2$. We partition the set $\{i_1,\dots,i_K\}$ as $\{i_1,\dots,i_K\}=\cup_{s=1}^{S+1} N_s$, where $\#N_s = k_n+1$ for $s=1,\dots,S$ are sets of consecutive indices and $\#N_{S+1} \leq k_n+1$. Using Step 2, for every $s=1,\dots,S$, we have that there exist $i^s \in N_s$ and an associated $q^s \in Q_{k_n}$ such that 
		\begin{equation}\label{c8}
			\left|\hat{z}_n^{i^s}-q^s\right| \leq \frac{150C\sqrt{C}\delta_n^{3/8}}{\delta_n^{1/16}} = 150C\sqrt{C}\delta_n^{5/16}.
		\end{equation}
		We also recall that for every $z_n^i$ and $z_n^j$ with $|i-j| \leq 3k_n$ using the energy bound \eqref{c1} and H\"older inequality we have
		\begin{equation}\label{c8.5}
			|z_n^i-z_n^j| \leq \sqrt{3k_n} \left( \sum_{l=i}^{i+3k_n} |z_n^{l+1}-z_n^l| \right)^{1/2} C\delta_n^{1/8}.
		\end{equation}
		By Lemma \ref{lem: dist of z} we have that for $n$ large enough 
		\begin{equation}\label{c9}
		\Vert \dist\left(z_n,\textnormal{span} (Q_{k_n})\right) \Vert_{L^\infty} \leq 8\sqrt{C} \delta_n^{1/8}.
		\end{equation}
		We now distinguish three cases: 1. $q_s \in Q_{k_h}^h$ for every $s=1,\dots,S$, 2. $q_s \in Q_{k_h}^v$ for every $s=1,\dots,S$, 3. there exist $s_1,s_2 \in \{1,\dots,S\}$ such that $q_{s_1} \in Q_{k_n}^v$ and $q_{s_2} \in Q_{k_n}^h$. Notice that in the first case, since $\diam(Q_{k_n}^h) \leq 2\delta_n^{1/4}$ and in view of \eqref{c8} and \eqref{c8.5}, there is nothing to prove. The third case cannot happen since $\dist(Q_{k_n}^h,Q_{k_n}^v) \geq \frac{1}{4}$ and we assumed that $|z_n| \geq \delta_n^{1/32}$ on $J$. Therefore, we can assume that $q^s \in Q_{k_n}^v$ for every $s=1,\dots,S$. Let $T \in \N$ be the number of distinct $q^s$ satisfying \eqref{c8} for $s=1,\dots,S$. By definition of $Q_{k_n}$ and \eqref{c9} we have $T \leq k_n/2$. There exists an ordered subset of $\{1,\dots,S\}$ given by $\{s_1 \dots s_T\}$ such that $q^{s_j} \neq q^{s_l}$ for $l \neq j$. Let $\gamma \colon [1,T] \to \R^3$ be the polygonal chain with the property that $\gamma(j)=q^{s_j}$ for $j=1,\dots,T$. Recalling the definition of $Q_{k_n}^v$ and setting $L(\gamma)$ the length of the curve $\gamma$, we estimate
		\begin{equation}\label{c10}
			L(\gamma) = \sum_{j=1}^{T-1} |q^{s_{j+1}}-q^{s_j}| \geq T \frac{\delta_n^{1/4}}{2}.
		\end{equation}
		We now introduce the set $\hat{N} \subset \ \{i_1,\dots,i_K\}$ as
		$$
		\mbox{$i \in \hat{N}$ if there exist $s,t \in \{1,\dots,S\}$ such that $i^s < i < i^t$ and $q^s=q^t$.}
		$$
		In view of the previous considerations and \eqref{c8}, there exist at most $2(k_n+1)T+\#N_{S+1} \leq 4Tk_n$ indexes in $\{i_1,\dots,i_K\} \setminus \hat{N}$.
		Combining this fact with \eqref{Hsln geq}, \eqref{c8} and \eqref{c10} we have
		\begin{align*}
			32L(\gamma) F_n(J,z_n) & \geq \frac{16L(\gamma)}{\sqrt{\delta_n}} \sum_{i=i_1}^{i_K-1} |z_n^{i+1}-z_n^i|^2 \geq 4Tk_n \sum_{i \in \{i_1,\dots,i_K\} \setminus \hat{N}} |z_n^{i+1}-z_n^i|^2 \\
			& \geq \left( \sum_{i \in \{i_1,\dots,i_K\} \setminus \hat{N}} |z_n^{i+1}-z_n^i| \right)^2 \geq \left( \sum_{i=i_1}^{i_K-1} |z_n^{i+1}-z_n^i|-300C\sqrt{C}\delta_n^{5/16}T \right)^2 \\
			& \geq \left(\sum_{j=1}^{T-1} |q^{s_{j+1}}-q^{s_j}| -450C\sqrt{C}\delta_n^{1/16} \right)^2 \geq \left( L(\gamma)-\delta_n^{1/32} \right)^{2}.
		\end{align*}
		Hence, if $L(\gamma) \geq 2\delta_n^{1/32}$, we get $32F_n(J,z_n) \geq L(\gamma)/4$. Notice that by definition of $\gamma$, \eqref{c8} and \eqref{c8.5} it holds
		$$
		L(\gamma) \geq |q^S-q^1| \geq |z_n^{i_K}-z_n^{i_1}| - 300C\sqrt{C}\delta_n^{5/16}-16\sqrt{C}\delta_n^{1/8} \geq |z_n(x_2)-z_n(x_1)| - \delta_n^{1/32}.
		$$ 
		Therefore, if $|z_n(x_2)-z_n(x_1)| \geq 3\delta_n^{1/32}$, then $L(\gamma) \geq 2\delta_n^{1/32}$ and
		\begin{equation}\label{c11}
			32F_n(J,z_n) \geq \frac{L(\gamma)}{4} \geq \frac{|z_n(x_2)-z_n(x_1)| - \delta_n^{1/32}}{4} \geq \frac{|z_n(x_2)-z_n(x_1)|}{8}.
		\end{equation}
		This concludes the proof of \eqref{c7} with $c=1/256$.
		
		\noindent \textbf{Step 4:} Recall $I=[0,1]$. Let $0=t_1,\dots,t_K,t_{K+1}= 1$ be a partition of $I$ with the property that  
		\begin{equation}\label{c12}
			t_{j+1}:=\inf \{t \in (t_j,1) \colon \ |z_n(t)-z_n(t_j)| \geq 10\delta_n^{1/32}  \} \qquad j=1,\dots,K-1.
		\end{equation}
		We define the piecewise constant function $w_n \colon I \to \R^3$ as 
		$$
		w_n(t)=z_n(t_j) \qquad t \in [t_j,t_{j+1}) \qquad j=1,\dots,K.
		$$ 
		By definition of $w_n$ and \eqref{c12} we have 
		$\Vert w_n - z_n \Vert_{L^\infty(I,\R^3)} \leq 10\delta_n^{1/32}$,
		moreover, 
		$$
		c|Dw_n|(I)= c\sum_{j=1}^{K-1} |z_n(t_{j+1})-z_n(t_j)| \leq F_n(I,z_n).
		$$
		where $c>0$ is the constant in \eqref{c7}. Since $F_n(I,z_n)$ is equibounded in $n$, the sequence $\{w_n\}_n$ is bounded in $BV(I,\R^3)$. Hence, there exists $z \in BV(I,\R^3)$ such that, up to a non relabeled subsequence, $w_n \to z$ in $L^1(I,\R^3)$. Thus, $z_n \to z$ in $L^1(I,\R^3)$, and
		$$
		c|Dz|(I) \leq \liminf_{n \to +\infty} c|Dw_n|(I) \leq \liminf_{n \to +\infty} F_n(z_n) = \liminf_{n \to +\infty} \frac{H_n^{sl,k_n}}{\sqrt{2}\lambda_n\delta_n^{3/2}}(z_n).
		$$
		Finally, owing Lemma \ref{lem: dist of z} and the fact that $|z_n| \to 1$ in $I$, we infer that $\dist(z_n,Q_{k_n}) \to 0$ in measure on $I$. Therefore, $z \in BV(I,\overline{Q})$. This concludes the proof of the compactness and the $\Gamma \mbox{-} \liminf$.\\

	\noindent \textbf{$\Gamma$-limsup inequality.} To represent points on $\Sf^2$ we use spherical coordinates where $p \in \Sf^2$ is given by
	$$
	p=(\sin \theta \cos \varphi, \sin \theta \sin \varphi, \cos \theta) \qquad \mbox{with $\varphi \in [0,2\pi)$ and $\theta \in [0,\pi]$}.
	$$
	
	The main idea is that if we want to transition from $\Sf^2 \cap (q_m^v)^\perp$ to $\Sf^2 \cap (q_{m+1}^v)^\perp$ (or equivalently, from $z_n=q_m^v$ to $z_n=q_{m+1}^v$), we can slightly modify the chirality direction using all the circles given by $\Sf^2 \cap (q^h_j)^\perp$ without slowing down in the process. This in turn will allow us to transition from two different "adjacent" chiralities paying an energy which is proportional to the distance between the two. For every $j,m=1,\dots,k_n/2$, let $p_{j,m}$ be the unique point on $\Sf^2$ given by $\Sf^2 \cap (q_j^h)^\perp \cap (q_m^v)^\perp \cap \{x_1 \geq 0\}$. We first want to find an explicit expression for the point $p_{j,m}$. Notice that at the intersection, we have
	$\cos (\theta) = m\delta_n^{1/4}\sin (\theta) \sin (j \sqrt{2\delta_n})$, that is, 
	$$
	\theta=\pi/2-\arctan(m\delta_n^{1/4}\sin(j\sqrt{2\delta_n}))=\pi/2-m\delta_n^{1/4}j\sqrt{2\delta_n}+\psi_{j,m},
	$$
	where $\psi_{j,m} \ll m\delta_n^{3/4}$ is a negligible phase.
	Hence, the intersection $p_{j,m}$ in spherical coordinates is given by
	$$
	p_{j,m}=\left( \cos (jm\sqrt{2}\delta_n^{3/4}+\psi_{j,m}) \cos(j\sqrt{2\delta_n}), \ \cos (jm\sqrt{2}\delta_n^{3/4}+\psi_{j,m}) \sin(j\sqrt{2\delta_n}), \ \sin(jm\sqrt{2}\delta_n^{3/4}+\psi_{j,m}) \right).
	$$
	Moreover, notice that by definition we have $(1,0,0) \in \Sf^2 \cap (q_m^v)^\perp$ for every $m=1,\dots,k_n/2$.
	We set $\theta_{j,m}:=m\delta_n^{1/4}j\sqrt{2\delta_n}$ and $\varphi_j:=j\sqrt{2\delta_n}$. Fix $m \in \{1,\dots,k_n/2\}$, we construct a set of points on the sphere $u_n^j$ with $j=0,\dots,k_n/2$ such that $u_n^0=(1,0,0)$, $u_n^j \in \Sf^2 \cap (q_j^h)^\perp$ for $j=1,\dots,k_n/2$, $u_{n}^{k_n/2}=p_{k_n/2,m+1}$ and the energy associated to this "path" transitioning between $\Sf^2 \cap (q_{m}^v)^\perp$ to $\Sf^2 \cap (q_{m+1}^v)^\perp$ is small. Let $f \colon [0,k_n/2] \to [0,\sqrt{2}\delta_n^{3/4}k_n/2+\psi_{k_n/2,m+1}]$ be a smooth monotone increasing function with $\Vert f'' \Vert_{L^\infty} \leq 8\delta_n$, such that $f(0)=f'(0)=0$, $f(1)=\psi_{1,m}$,
	\begin{equation*} f\left(\frac{k_n}{2}-1\right)=\sqrt{2}\delta_n^{3/4}\left(\frac{k_n}{2}-1\right)+\psi_{\frac{k_n}{2}-1,m+1} \qquad \mbox{and} \qquad f\left(\frac{k_n}{2}\right)=\sqrt{2}\delta_n^{3/4}\frac{k_n}{2}+\psi_{\frac{k_n}{2},m+1}.
	\end{equation*}
	We define $u_n^j$ as
	\begin{equation}\label{e1}
		u_n^j:=\left( \cos (\theta_{j,m}+f(j)) \cos(\varphi_j), \ \cos (\theta_{j,m}+f(j)) \sin(\varphi_j), \ \sin(\theta_{j,m}+f(j)) \right).
	\end{equation}

	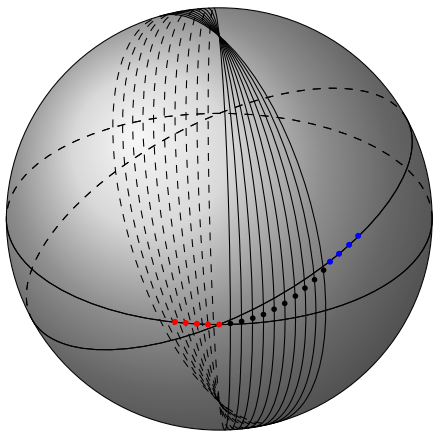
\begin{figure}[H]
		\tdplotsetmaincoords{60}{125}  
		\begin{tikzpicture}[scale=0.7]
			\draw[tdplot_screen_coords, ball color = black!20, opacity=0.4] (0,0,0) circle (4); 
			
			\foreach \angle in {128,131,...,155} {
				\scircle{4}{\angle}{90}{0};
			}
			
			\scircle{4}{0}{0}{0}
			\scircle{4}{125}{-25}{0}
			\scircle{4}{0}{0}{0}
			\scircle{4}{125}{-25}{0}
			
			\fill[red] (0,-2) circle (1.6pt); 
			\fill[red] (-0.21,-2) circle (1.6pt);
			\fill[red] (-0.42,-1.985) circle (1.6pt);
			\fill[red] (-0.63,-1.97) circle (1.6pt);
			\fill[red] (-0.83,-1.95) circle (1.61pt);
			\fill[black] (0.21,-1.98) circle (1.5pt);
			\fill[black] (0.42,-1.94) circle (1.5pt);
			\fill[black] (0.63,-1.88) circle (1.5pt);
			\fill[black] (0.83,-1.81) circle (1.5pt);
			\fill[black] (1.03,-1.71) circle (1.5pt);
			\fill[black] (1.23,-1.6) circle (1.5pt);
			\fill[black] (1.43,-1.46) circle (1.5pt);
			\fill[black] (1.61,-1.31) circle (1.5pt);
			\fill[black] (1.79,-1.15) circle (1.5pt);
			\fill[black] (1.96,-0.97) circle (1.5pt);
			\fill[blue] (2.08,-0.81) circle (1.6pt);
			\fill[blue] (2.25,-0.66) circle (1.6pt);
			\fill[blue] (2.44,-0.49) circle (1.6pt);
			\fill[blue] (2.61,-0.32) circle (1.6pt);
		\end{tikzpicture}
		\caption{Spin transition between two different chiralities (red and blue) using a family of meridians}
		\label{fig: recovery}
	\end{figure}

	By construction $u^1_n \in \Sf^2 \cap (q_m^v)^\perp$ and $u_n^{k_n/2-1} \in \Sf^2 \cap (q_{m+1}^v)^\perp$. Thus, $z_n^0=q_{m}^v$ and $z_n^{k_n/2-1}=q_{m+1}^v$.
	We claim that there exists $C>0$ such that
	$$
	\frac{\sqrt{2\delta_n}}{\lambda_n} \sum_{j=1}^{k_n/2-1} \lambda_n \left( \left| \frac{u_n^{j+1}-u_n^j}{\sqrt{2\delta_n}} \right|^2-1 \right)^2 + \frac{\lambda_n}{\sqrt{2\delta_n}} \sum_{j=1}^{k_n/2-1} \lambda_n \left| \frac{z_n^{j+1}-z_n^{j}}{\lambda_n} \right|^2 \leq C\delta_n^{1/4}. 
	$$
	We start by estimating the quantity $|u_n^{j+1}-u_n^j|$ for $j=1,\dots,k_n/2-1$. Recalling that $\varphi_j=j\sqrt{2\delta_n}$ and $\theta_{j,m}=m\delta_n^{1/4}j\sqrt{2\delta_n}$, for $\delta_n$ small enough by prosthaphaeresis formulas we have
	\begin{align}\label{e2}
		& |\cos(\varphi_{j+1})-\cos(\varphi_j)| \leq 2 \sin((2j+1)\sqrt{2\delta_n})\sin(\sqrt{2\delta_n}/2) \leq 4j\delta_n; \\
		& |\sin(\varphi_{j+1})-\sin(\varphi_j)| = 2\cos\left(  \label{e3} \frac{2j+1}{2}\sqrt{2\delta_n}\right)\sin\left(\frac{\sqrt{2\delta_n}}{2}\right) \leq \sqrt{2\delta_n}.
	\end{align}
	Moreover, since $f'(0)=0$ and $\Vert f'' \Vert_{L^\infty} \leq 8\delta_n$, we infer that $f(j) \leq 4j\delta_n^{3/4}$ and $f(j+1)-f(j) \leq 8\delta_n^{3/4}$. Thus,
	\begin{align}\label{e4}
		& \left|\cos(\theta_{j+1,m}+f(j+1))-\cos(\theta_{j,m}+f(j)) \right| \leq 16(j+1)m^2\delta_n^{3/2}; \\
		& \left|\sin(\theta_{j+1,m}+f(j+1))-\sin(\theta_{j,m}+f(j)) \right| \leq 16m\delta_n^{3/4}. \label{e5}
	\end{align}
	Combining \eqref{e2}, \eqref{e4} and using that $m,j \leq \delta_n^{-1/4}$ we get
	\begin{align}\label{e6}
		\begin{split}
			& |u^{j+1}_{n,1}-u^j_{n,1}| \\
			& \ \ \leq |\cos(\theta_{j,m}+f(j))-\cos(\theta_{j+1,m}+f(j+1))|\cos(\varphi_{j+1})|+|\cos(\varphi_{j+1})-\cos(\varphi_{j})||\cos(\theta_{j,m}+f(j))| \\
			& \ \ \leq 4j\delta_n+16(j+1)m^2\delta_n^{3/2} \\
			& \ \ \leq 20 \delta_n^{3/4}.
		\end{split}
	\end{align}
	For the second component reasoning as above, combining \eqref{e2} and \eqref{e4}, we first estimate
	\begin{align}\label{e7}
		\begin{split}
			|u^{j+1}_{n,2}-u^j_{n,2}| \leq \sqrt{2\delta_n}+16 \sqrt{2}(j+1)^2m^2 \delta_n^2 + \delta_n \leq \sqrt{2 \delta_n}+ 32 \delta_n.
		\end{split}
	\end{align}
	Observing that by \eqref{e2}
	$$
	\sin(\varphi_{j+1})-\sin(\varphi_{j}) \geq \left( 1-2(j+1)^2\delta_n \right)\left( \sqrt{2\delta_n}-2\delta_n^{3/2} \right) \geq \sqrt{2\delta_n}-2\delta_n,
	$$
	we also obtain the lower bound
	\begin{align}\label{e8}
		\begin{split}
			|u^{j+1}_{n,2} & -u^j_{n,2}| \\
			& \geq \cos(\theta_{j,m}+f(j))\left( \sin(\varphi_{j+1})-\sin(\varphi_{j}) \right)-|\sin(\varphi_{j+1})|\left|\cos(\theta_{j+1,m}+f(j+1))-\cos(\theta_{j,m}+f(j))\right| \\
			& \geq (1-2j^2 m^2 \delta_n^{3/2}-2f(j)^2)(\sqrt{2\delta_n}- 2\delta_n)-(j+1)^2 \sqrt{2 \delta_n} 16 m^2 \delta_n^{3/2} \\
			& \geq (\sqrt{2\delta_n} -2\delta_n)(1-4\sqrt{\delta_n})-16\delta_n \\
			& \geq \sqrt{2\delta_n} - 32 \delta_n.
		\end{split}
	\end{align}
	Finally, using \eqref{e5}, for the third component we have
	\begin{align}\label{e9}
		\begin{split}
			|u^{j+1}_{n,3}-u^j_{n,3}| \leq 16m\delta_n^{3/4} \qquad \mbox{for every $m=1,\dots,k_n/2$}.
		\end{split}
	\end{align}
	Combining \eqref{e6}--\eqref{e9}, given $m \in \{1,\dots,k_n/2\}$, for every $j=1,\dots,k_n/2$ we estimate
	$$
	\left( 1-32\sqrt{\delta_n} \right)^2-1 \leq \left| \frac{u_n^{j+1}-u_n^j}{\sqrt{2\delta_n}} \right|^2-1 \leq \left( 20 \delta_n^{1/4} \right)^2 + \left( 1+32\sqrt{\delta_n} \right)^2+\left( 16m\delta_n^{1/4} \right)^2-1.
	$$
	Therefore,
	$$
	\left( \left| \frac{u_n^{j+1}-u_n^j}{\sqrt{2\delta_n}} \right|^2-1 \right)^2 \leq \left( C m^2 \sqrt{\delta_n} \right)^2 \leq C m^4 \delta_n,
	$$
	where $C>0$ is a constant independent of $n$ and $m$. Hence, for every $m \in \{1,\dots,k_n/2\}$, the first part of the energy to transition from $q_m^v$ to $q_{m+1}^v$ can be estimated as
	\begin{equation}\label{e10}
		\frac{\sqrt{2\delta_n}}{\lambda_n} \sum_{j=1}^{k_n/2-1} \lambda_n \left( \left| \frac{u_n^{j+1}-u_n^j}{\sqrt{2\delta_n}} \right|^2-1 \right)^2 \leq \sqrt{2\delta_n} k_n C m^4 \delta_n \leq C\delta_n^{1/4}.
	\end{equation}
For the second term of the energy we start noticing that setting $z_n^j:=(u_n^{j} \times u_n^{j+1})/\sqrt{2\delta_n}$, we have
	\begin{equation}\label{e10.5}
	z_n^{j}-z_n^{j-1}= \frac{u^j_n \times (u_n^{j+1}+u_n^{j-1}-2u_n^j)}{\sqrt{2\delta_n}}.
	\end{equation}
	Moreover, the following estimates hold for every $m,j =1,\dots,k_n/2$ by definition of $u_n^j$
	\begin{equation}\label{e11}
		|u_{n,1}^j| \leq 1, \qquad |u_{n,2}^j| \leq j\sqrt{2\delta_n} \leq 2\delta_n^{1/4}, \qquad |u_{n,3}^j| \leq 2jm\delta_n^{3/4} \leq 2\delta_n^{1/4}.
	\end{equation}
	We are going to use the following fact: let $\phi \in C^2(\R)$ and let $x,y,z \in \R$ with $x<y<z$, then
	\begin{equation}\label{e12}
		|\phi(z)+\phi(x)-2\phi(y)| \leq \Vert \phi' \Vert_{L^\infty([x,z])}|z+x-2y|+\Vert \phi'' \Vert_{L^\infty([x,z])}|z-x|^2.
	\end{equation}
	The first component of $u_n^{j+1}+u_n^{j-1}-2u_n^j$ can be estimated using $\phi(t)=\cos(t)$ in \eqref{e12} and the fact that $\varphi_{j+1}+\varphi_{j-1}-2\varphi_j=0$ as
	\begin{align*}
		& \left| \cos (\theta_{j+1,m}+f(j+1)) \cos(\varphi_{j+1})+\cos (\theta_{j-1,m}+f(j-1)) \cos(\varphi_{j-1})-2\cos (\theta_{j,m}+f(j)) \cos(\varphi_j) \right| \\
		& \leq \left| \cos(\varphi_{j+1})+\cos(\varphi_{j-1})-2\cos(\varphi_{j}) \right|+\left| \cos (\theta_{j+1,m}+f(j+1))-\cos (\theta_{j,m}+f(j)) \right| \\
		& \qquad \qquad \qquad \qquad \qquad \qquad  \qquad \qquad \ \ + \left| \cos (\theta_{j,m}+f(j))-\cos (\theta_{j-1,m}+f(j-1)) \right| \\
		& \leq \left| \varphi_{j+1}-\varphi_{j-1} \right|^2+\sin(\theta_{j+1,m}+f(j+1)) \left| \theta_{j+1,m}+f(j+1)-\theta_{j,m}-f(j) \right| \\
		& \qquad \qquad \qquad \qquad \qquad \qquad  \qquad \qquad \ \ + \sin(\theta_{j,m}+f(j)) \left| \theta_{j,m}+f(j)-\theta_{j-1,m}-f(j-1) \right| \\
		& \leq 8\delta_n+8m(j+1)\delta_n^{3/4}m\delta_n^{3/4} \leq 8\delta_n^{3/4}.
	\end{align*}
	Hence,
	\begin{equation}\label{e13}
		\left|u_{n,1}^{j+1}+u_{n,1}^{j-1}-2u_{n,1}^j\right| \leq 16\delta_n^{3/4}.
	\end{equation}
	For the second component, using $\phi(t)=\sin(t)$ in \eqref{e12} and arguing as above we get
	\begin{align*}
		& \left| \cos (\theta_{j+1,m}+f(j+1)) \sin(\varphi_{j+1})+\cos (\theta_{j-1,m}+f(j-1)) \sin(\varphi_{j-1})-2\cos (\theta_{j,m}+f(j)) \sin(\varphi_j) \right| \\
		& \leq \left| \sin(\varphi_{j+1})+\sin(\varphi_{j-1})-2\sin(\varphi_{j}) \right|+\left| \cos (\theta_{j+1,m}+f(j+1))-\cos (\theta_{j,m}+f(j)) \right|\sin(\varphi_{j+1}) \\
		& \qquad \qquad \qquad \qquad \qquad \qquad  \qquad \qquad \ + \left| \cos (\theta_{j,m}+f(j))-\cos (\theta_{j-1,m}+f(j-1)) \right| \sin(\varphi_{j-1}) \\
		& \leq \sin(\varphi_{j+1})\left| \varphi_{j+1}-\varphi_{j-1} \right|^2+\sin(\theta_{j+1,m}+f(j+1)) \left| \theta_{j+1,m}+f(j+1)-\theta_{j,m}-f(j) \right|\sin(\varphi_{j+1}) \\
		& \qquad \qquad \qquad \qquad \qquad \qquad  \qquad \qquad \ + \sin(\theta_{j,m}+f(j)) \left| \theta_{j,m}+f(j)-\theta_{j-1,m}-f(j-1) \right|\sin(\varphi_{j-1}) \\
		& \leq 8(j+1)\delta_n^{3/2}+8m(j+1)\delta_n^{3/4}m\delta_n^{3/4}(j+1)\sqrt{2\delta_n} \leq 16\delta_n;
	\end{align*}
	that is,
	\begin{equation}\label{e14}
		\left|u_{n,2}^{j+1}+u_{n,2}^{j-1}-2u_{n,2}^j\right| \leq 16\delta_n.
	\end{equation}
	Finally, for the third component, using again $\phi(t)=\sin(t)$ in \eqref{e12} and $\Vert f'' \Vert_{L^\infty} \leq 8\delta_n$, we estimate
	\begin{align*}
		& \left| \sin(\theta_{j+1,m}+f(j+1))+\sin(\theta_{j-1,m}+f(j-1))-2\sin(\theta_{j,m}+f(j)) \right| \\
		& \leq \cos(\theta_{j-1,m}+f(j-1))\left| f(j+1)+f(j-1)-2f(j) \right| \\
		& \qquad \qquad \qquad +\sin(\theta_{j+1,m}+f(j+1))\left| \theta_{j+1,m}-\theta_{j-1,m}+f(j+1)-f(j) \right|^2 \\
		& \leq 32\delta_n+64m(j+1)\delta_n^{3/4}m^2\delta_n^{3/2} \leq 33\delta_n.
	\end{align*}
	Thus,
	\begin{equation}\label{e15}
		\left|u_{n,3}^{j+1}+u_{n,3}^{j-1}-2u_{n,3}^j\right| \leq 33\delta_n.
	\end{equation}
	Combining \eqref{e10.5}--\eqref{e15} we get that there exists $C>0$ not depending on $j$,$m$ and $n$ such that
	$$
	|z_n^j-z_n^{j-1}| \leq C\sqrt{\delta_n}.
	$$
	Therefore, for the second term of the energy, for every $m=1,\dots,k_n/2$ we have
	\begin{equation}\label{e16}
		\frac{\lambda_n}{\sqrt{2\delta_n}} \sum_{j=1}^{k_n/2-1} \lambda_n \left| \frac{z_n^{j+1}-z_n^{j}}{\lambda_n} \right|^2 \leq Ck_n \sqrt{\delta_n} \leq C \delta_n^{1/4},
	\end{equation}
	and the claim is proven.

	Let now $m_1<m_2 \in \{1,\dots,k_n/2\}$. By definition of $q_m^v$ there exist $\alpha_1,\alpha_2>0$ such that
	$$
	\alpha_1(m_2-m_1) \delta_n^{1/4} \leq |q_{m_2}^v-q_{m_1}^v| \leq \alpha_2 (m_2-m_1) \delta_n^{1/4}.
	$$
	We construct a path in order to transition from $q_{m_1}^v$ and $q_{m_2}^v$. Let $u_n^0=(1,0,0)$ and $z_n^0=q_{m_1}^v$. We define $u_n^j$ for every $j=1,\dots,k_n/2$ as the path to transition from $q_{m_1}^v$ to $q_{m_1+1}^v$ constructed above. Since $u_n^{k_n/2} \in \Sf^2 \cap (q_{m_1+1})^\perp$ and $z_n^{k_n/2-1} = q_{m_1+1}^v$, we pick $u_n^{k_n/2+1} \in \Sf^2 \cap (q_{m_1+1})^\perp$ such that
	$$
	|u_n^{k_n/2+1}-u_n^{k_n/2}| = \sqrt{2\delta_n}.
	$$
	Next, we complete the revolution around the sphere remaining constrained on $\Sf^2 \cap (q_{m_1+1})^\perp$ in a number of steps which is proportional to $\delta_n^{-1/2}$ and paying (almost) zero energy in the process. That is, we define $u^j_n \in \Sf^2 \cap (q_{m_1+1})^\perp$ for every $j \in \{k_n/2+2,\dots,k_n/2+N_n\}$, where $N_n \simeq \delta_n^{-1/2}$, such that $|u_n^j-u_n^{j-1}| = \sqrt{2\delta_n}$ for every $j \in \{k_n/2+2,\dots,k_n/2+N_n\}$ and $u_n^{k_n/2+N_n+1}=(1,0,0)$ such that 
	$$
	|u_n^{N_n+k_n/2+1}-u_n^{N_n+k_n/2}| \leq \sqrt{2\delta_n}.
	$$
	Thus, for the revolution around the sphere we paid at most $\sqrt{2\delta_n}$ in the last step, which is negligible with respect to $\delta_n^{1/4}$.
	We now have $z_n^{N_n+k_n/2}=q_{m_1+1}$ and $u_n^{k_n/2+N_n+1}=(1,0,0)$, therefore, we perform the same construction as before to transition from $q_{m_1+1}^v$ to $q_{m_1+2}^v$. Notice that the energy associated to this path is relevant only in the $k_n$ steps we are using to transition between the two chiralities and not when we are revolving around the sphere.
	Hence, using , \eqref{e10} and \eqref{e16} we can estimate the energy associated to the path we constructed in order to transition from $q_{m_1}^v$ to $q_{m_2}^v$ and get
	\begin{align}
		\begin{split}\label{fine}
	\sum_{m=m_1}^{m_2} \left( \frac{\sqrt{2\delta_n}}{\lambda_n} \sum_{j=1}^{k_n/2-1} \lambda_n \left( \left| \frac{u_n^{j+1}-u_n^j}{\sqrt{2\delta_n}} \right|^2-1 \right)^2+ \frac{\lambda_n}{\sqrt{2\delta_n}} \sum_{j=1}^{k_n/2-1} \lambda_n \left| \frac{z_n^{j+1}-z_n^{j}}{\lambda_n} \right|^2 \right) & \leq C (m_2-m_1) \delta_n^{1/4} \\ & \leq C|q_{m_2}^v-q_{m_1}^v|.
		\end{split}
	\end{align}
	Moreover, observe that in the construction we have used $\simeq \delta_n^{-1/2} \delta_n^{-1/4}$ points, hence, since by assumption $\frac{\lambda_n}{\delta_n^{3/4}} \to 0$, this construction can always be employed locally near the jump points of $z$.
	Estimate \eqref{fine} together with Proposition \ref{prop: bound on Hsln} and the fact that the construction of the recovery sequence is local, implies that 
	given $z \in BV_{\textnormal{pc}}(I,\overline{Q} \setminus \{(0,1,0)\})$ we have
	$$
	\Gamma \mbox{-} \limsup_{n \to +\infty} \frac{H^{sl,k_n}_n(z)}{\sqrt{2}\lambda_n \delta_n^{3/2}}\leq C |Dz|(I).
	$$

	Let now $z \in BV_{\textnormal{pc}}(I,\overline{Q})$. By arguing locally we can always assume that $z$ has only one jump point in $I$ and that $z$ takes two values $q \in \overline{Q} \setminus \{(0,1,0)\}$ and $(0,1,0)$. Using the construction for the upper bound proof in Theorem \ref{thm: Gamma lim < 1 quarto}, we get that 
	$$
	\Gamma \mbox{-} \limsup_{n \to +\infty} \frac{H^{sl,k_n}_n(z)}{\sqrt{2}\lambda_n \delta_n^{3/2}} \leq \frac{8}{3} \leq 8 \, \dist(\overline{Q} \setminus \{(0,1,0)\},\{(0,1,0)\}) \leq 8|Dz|(I).
	$$
	Therefore, for every $z \in BV_{\textnormal{pc}}(I,\overline{Q})$ it holds
	$$
	\Gamma \mbox{-} \limsup_{n \to +\infty} \frac{H^{sl,k_n}_n(z)}{\sqrt{2}\lambda_n \delta_n^{3/2}}\leq C |Dz|(I).
	$$
	
	Finally, for a general $z \in BV(I,\overline{Q})$ we conclude by density of $BV_{\textnormal{pc}}(I,\overline{Q})$ in $BV(I,\overline{Q})$ with respect to the total variation norm.
	\end{proof}

	\section*{Acknowledgments}
	
	This work was supported by the Italian Ministry of Education and Research through the PRIN 2022 project ``Variational Analysis of Complex Systems in Material Science, Physics and Biology'' No. 2022HKBF5C. 
	D.R. gratefully acknowledge the Cluster of Excellence EXC 2044-390685587, Mathematics M\"unster: Dynamics-Geometry-Structure funded by the Deutsche
	Forschungsgemeinschaft (DFG, German Research Foundation).
	D.R. and F.S. \ are grateful to Technische Universität München for its kind hospitality during the realization of part of this work, and to the Scuola Superiore Meridionale and the University of Naples "Federico II", respectively, where most of this research was carried out.
	Finally, D.R. and F.S \ are members of the Gruppo Nazionale per l'Analisi Matematica, la Probabilit\'a e le loro Applicazioni (GNAMPA-INdAM).

	\bibliographystyle{siam}
	\bibliography{bibliography_NEW}

\begin{thebibliography}{10}

\bibitem{ABCS}
{\sc R.~Alicandro, A.~Braides, M.~Cicalese, and M.~Solci}, {\em Discrete
  variational problems with interfaces}, vol.~40 of Cambridge Monographs on
  Applied and Computational Mathematics, Cambridge University Press, Cambridge,
  2024.

\bibitem{Ambrosio}
{\sc L.~Ambrosio}, {\em Metric space valued functions of bounded variation},
  Annali della Scuola Normale Superiore di Pisa-Classe di Scienze, 17 (1990),
  pp.~439--478.

\bibitem{Ambrosio2000FunctionsOB}
{\sc L.~Ambrosio, N.~Fusco, and D.~Pallara}, {\em Functions of bounded
  variation and free discontinuity problems}, Oxford Mathematical Monographs,
  The Clarendon Press, Oxford University Press, New York, 2000.

\bibitem{Baldo}
{\sc S.~Baldo}, {\em Minimal interface criterion for phase transitions in
  mixtures of {C}ahn-{H}illiard fluids}, in Annales de l'institut Henri
  Poincar{\'e} (C) Analyse non lin{\'e}aire, vol.~7, Gauthier-Villars, 1990,
  pp.~67--90.

\bibitem{BraidesGamma}
{\sc A.~Braides}, {\em {$\Gamma$}-convergence for beginners}, vol.~22 of Oxford
  Lecture Series in Mathematics and its Applications, Oxford University Press,
  Oxford, 2002.

\bibitem{BraCoGarr}
{\sc A.~Braides, S.~Conti, and A.~Garroni}, {\em Density of polyhedral
  partitions}, Calc. Var. Partial Differential Equations, 56 (2017).

\bibitem{BraYip}
{\sc A.~Braides and N.~K. Yip}, {\em A quantitative description of mesh
  dependence for the discretization of singularly perturbed nonconvex
  problems}, SIAM Journal on Numerical Analysis, 50 (2012), pp.~1883--1898.

\bibitem{BraZepp}
{\sc A.~Braides and C.~I. Zeppieri}, {\em Multiscale analysis of a prototypical
  model for the interaction between microstructure and surface energy},
  Interfaces Free Bound., 11 (2009), pp.~61--118.

\bibitem{CDMFL}
{\sc M.~Chermisi, G.~Dal~Maso, I.~Fonseca, and G.~Leoni}, {\em Singular
  perturbation models in phase transitions for second-order materials}, Indiana
  Univ. Math. J., 60 (2011), pp.~367--409.

\bibitem{CiFoOr_1}
{\sc M.~Cicalese, M.~Forster, and G.~Orlando}, {\em Variational analysis of a
  two-dimensional frustrated spin system: emergence and rigidity of chirality
  transitions}, SIAM J. Math. Anal., 51 (2019), pp.~4848--4893.

\bibitem{CiFoOr_2}
\leavevmode\vrule height 2pt depth -1.6pt width 23pt, {\em Variational analysis
  of the {$J_1$}-{$J_2$}-{$J_3$} model: a non-linear lattice version of the
  {A}viles-{G}iga functional}, Arch. Ration. Mech. Anal., 245 (2022),
  pp.~1059--1133.

\bibitem{COR3}
{\sc M.~Cicalese, G.~Orlando, and M.~Ruf}, {\em Coarse graining and large-{$N$}
  behavior of the {$d$}-dimensional {$N$}-clock model}, Interfaces Free Bound.,
  23 (2021), pp.~323--351.

\bibitem{COR1}
\leavevmode\vrule height 2pt depth -1.6pt width 23pt, {\em Emergence of
  concentration effects in the variational analysis of the {$N$}-clock model},
  Comm. Pure Appl. Math., 75 (2022), pp.~2279--2342.

\bibitem{COR2}
\leavevmode\vrule height 2pt depth -1.6pt width 23pt, {\em The {$N$}-clock
  model: variational analysis for fast and slow divergence rates of {$N$}},
  Arch. Ration. Mech. Anal., 245 (2022), pp.~1135--1196.

\bibitem{CiRuSol}
{\sc M.~Cicalese, M.~Ruf, and F.~Solombrino}, {\em Chirality transitions in
  frustrated {$S^2$}-valued spin systems}, Math. Models Methods Appl. Sci., 26
  (2016), pp.~1481--1529.

\bibitem{CiSol}
{\sc M.~Cicalese and F.~Solombrino}, {\em Frustrated ferromagnetic spin chains:
  a variational approach to chirality transitions}, J. Nonlinear Sci., 25
  (2015), pp.~291--313.

\bibitem{CSZ}
{\sc M.~Cicalese, E.~N. Spadaro, and C.~I. Zeppieri}, {\em Asymptotic analysis
  of a second-order singular perturbation model for phase transitions}, Calc.
  Var. Partial Differential Equations, 41 (2011), pp.~127--150.

\bibitem{CrScSo}
{\sc V.~Crismale, G.~Scilla, and F.~Solombrino}, {\em A derivation of
  {G}riffith functionals from discrete finite-difference models}, Calc. Var.
  Partial Differential Equations, 59 (2020).

\bibitem{CriFonGan}
{\sc R.~Cristoferi, I.~Fonseca, and G.~Likhit}, {\em Homogenization and phase
  separation with space dependent wells: The subcritical case}, Arch. Ration.
  Mech. Anal., 247 (2023).

\bibitem{CriGra}
{\sc R.~Cristoferi and G.~Gravina}, {\em Sharp interface limit of a multi-phase
  transitions model under nonisothermal conditions}, Calc. Var. Partial
  Differential Equations, 60 (2021).

\bibitem{diep}
{\sc H.~Diep}, {\em Frustrated spin systems}, World Scientific, 2005.

\bibitem{DmiKri}
{\sc D.~V. Dmitriev and V.~Y. Krivnov}, {\em Universal low-temperature
  properties of frustrated classical spin chain near the ferromagnet-helimagnet
  transition point}, The European Physical Journal B, 82 (2011), pp.~123--131.

\bibitem{DmiKriext}
{\sc D.V.Dmitriev and V.Ya.Krivnov}, {\em Universal low-temperature properties
  of frustrated classical spin chain near the ferromagnet-helimagnet transition
  point}, arXiv:1008.5053,  (2010).

\bibitem{FHLZ}
{\sc I.~Fonseca, G.~Hayrapetyan, G.~Leoni, and B.~Zwicknagl}, {\em Domain
  formation in membranes near the onset of instability}, J. Nonlinear Sci., 26
  (2016), pp.~1191--1225.

\bibitem{FM}
{\sc I.~Fonseca and C.~Mantegazza}, {\em Second order singular perturbation
  models for phase transitions}, SIAM J. Math. Anal., 31 (2000),
  pp.~1121--1143.

\bibitem{FS}
{\sc J.~Fr\"ohlich and T.~Spencer}, {\em The {K}osterlitz-{T}houless transition
  in two-dimensional abelian spin systems and the {C}oulomb gas}, Comm. Math.
  Phys., 81 (1981), pp.~527--602.

\bibitem{KuLa}
{\sc A.~Kubin and L.~Lamberti}, {\em Variational analysis in one and two
  dimensions of a frustrated spin system: chirality and magnetic anisotropy
  transitions}, Math. Eng., 5 (2023), pp.~Paper No. 094, 37.

\bibitem{SciVall}
{\sc G.~Scilla and V.~Vallocchia}, {\em Chirality transitions in frustrated
  ferromagnetic spin chains: A link with the gradient theory of phase
  transitions}, Journal of Elasticity, 132 (2018).

\end{thebibliography}
	
\end{document}